\DeclareMathOperator*{\argmin}{argmin}
\newcommand{\ini}{\textnormal{ini}}
\newcommand{\col}{\textnormal{col}}
\newcommand{\f}{\textnormal{F}}
\newcommand{\p}{\textnormal{P}}
\newcommand{\Lo}{\textnormal{L}}
\newcommand{\C}{\textnormal{C}}
\newcommand{\D}{\textnormal{d}}
\newcommand{\h}{\textnormal{H}}
\newcommand{\tr}{{{\mathsf T}}}
\newcommand{\Op}{\textnormal{op}}
\newcommand{\cs}{\textnormal{c}}
\newcommand{\s}{\textnormal{s}}
\newcommand{\sca}{\textnormal{sc}}
\newcommand{\lr}{\textnormal{lr}}
\newcommand{\z}{\textnormal{z}}
\newcommand{\method}[1]{\texttt{#1}}
\newcommand{\row}{\mathrm{row}}
\newcommand{\nsp}{\mathrm{null}}
\newtheorem{theorem}{Theorem}
\newtheorem{proposition}{Proposition}
\newtheorem{lemma}{Lemma}
\newtheorem{corollary}{Corollary}
\newtheorem{definition}{Definition}
\newtheorem{example}{Example}
\newtheorem{remark}{Remark}
\DeclareRobustCommand{\qed}{%
  \ifmmode 
  \else \leavevmode\unskip\penalty9999 \hbox{}\nobreak\hfill
  \fi
  \quad\hbox{\qedsymbol}}
\newcommand{\openbox}{\leavevmode
  \hbox to.77778em{%
  \hfil\vrule
  \vbox to.675em{\hrule width.6em\vfil\hrule}%
  \vrule\hfil}}
\newcommand{\qedsymbol}{\openbox}
\newenvironment{proof}[1][\proofname]{\par
  \normalfont
  \topsep6\p@\@plus6\p@ \trivlist
  \item[\hskip\labelsep\itshape
    #1.]\ignorespaces
}{%
  \qed\endtrivlist
}
\newcommand{\proofname}{Proof}
\begin{document}
\begin{frontmatter}
\title{
Regularization in Data-driven Predictive Control: A Convex Relaxation Perspective 
\vspace{-3mm}}
\author[First]{Xu Shang}
\author[First]{Yang Zheng}
\address[First]{Department of Electrical and Computer Engineering, University of California San Diego, \{x3shang, zhengy\}@ucsd.edu}

\begin{abstract}
This paper explores the role of regularization in data-driven predictive control (\method{DDPC}) through the lens of convex relaxation, in line with \cite{dorfler2022bridging}. Using a bi-level optimization framework, we model system identification as an inner problem and predictive control as an outer problem. Within this framework, we show that several regularized \method{DDPC} formulations, including $l_1$-norm penalties, projection-based regularizers, and a newly introduced causality-based regularizer, can be viewed as convex relaxations of their respective bi-level problems. This perspective clarifies the conceptual links between direct and indirect data-driven control and highlights how regularization implicitly enforces system identification. We further propose an approximation-based variant, \method{A-DDPC}, which approximately solves the inner problem with all identification constraints via an iterative algorithm. Numerical experiments demonstrate that \method{A-DDPC} outperforms existing regularized \method{DDPC} by reducing both bias and variance errors. These results indicate that further benefits may be obtained by applying system identification techniques to pre-process the trajectory library in nonlinear settings. Overall, our analysis contributes to a unified convex relaxation view of regularization in \method{DDPC} and sheds light on its strong empirical performance beyond linear time-invariant systems. 
\vspace{-3mm}
\end{abstract}

\begin{keyword}
   Data-driven Control, Bi-level optimization, Convex approximation %
\end{keyword}
\thanks{This work is supported by NSF ECCS-2154650, CMMI-2320697, CAREER 2340713, and an Early Career Faculty Development Award from the Jacobs School of Engineering at UC San Diego. The material in this paper was partially presented at the 6th Annual Learning for Dynamics \& Control Conference, University of Oxford, 15-17 July, 2024 \cite{shang2024convex}.}
\end{frontmatter}

\section{Introduction}
There has been a surging interest in utilizing data-driven techniques to control systems with unknown dynamics~\cite{hu2023toward, markovsky2021behavioral,talebi2024policy,dorfler2023data}. Existing data-driven methods can be generally categorized into indirect and direct control techniques. The indirect data-driven control approaches typically include the sequential system identification (system ID) and model-based control~\cite{ljung1998system, chiuso2019system,zheng2021sample,kouvaritakis2016model}. This two-stage control pipeline has been widely used, especially for linear systems. 
More recently, the Koopman operator has been leveraged to construct models of unknown nonlinear systems \cite{korda2018linear, haseli2023modeling,mauroy2020koopman}, but the accuracy of such models highly depends on the choice of lifting functions that are non-trivial to select \cite{shang2024willems}. In contrast, direct data-driven control methods bypass system identification altogether and design control strategies directly from input-output data, offering practitioners a potentially more attractive end-to-end alternative~\cite{markovsky2021behavioral, dorfler2022bridging,dorfler2023data}. 

One popular direct approach is the data-driven predictive control (\method{DDPC}) \cite{markovsky2021behavioral}, which utilizes Willems' fundamental lemma~\cite{willems2005note} to construct a data-driven representation of the system and use it in receding horizon control. Early ideas along this line can be traced back to the predictive control approach based on reduced Hankel matrices \cite{yang2015data}. 
Later, one of the most influential formulations, the so-called \method{DeePC}, is proposed in \cite{coulson2019data}. \method{DeePC} is initially established for deterministic linear time-invariant (LTI) systems, and its equivalence with subspace predictive control (\method{SPC}) has been discussed in~\cite{fiedler2021relationship}. Subsequent works \cite{berberich2020data,berberich2021design} have further investigated terminal constraint design for the closed-loop stability in LTI systems. Efficient \method{DDPC} formulations for LTI systems have been developed in \cite{alsalti2024robust,o2022data}, which require less offline data and can handle potentially noisy and irregular data. For systems beyond LTI settings, \method{DDPC} is extended to linear parameter varying systems \cite{verhoek2021data} and the feedback linearizable nonlinear systems \cite{alsalti2023data}. The work \cite{lian2021nonlinear} utilizes reproducing kernel functions to lift the state and obtain a linear model in the reproducing kernel Hilbert space, while the work \cite{lazar2024basis} uses basis functions to synthesize the data-driven representation in \method{DDPC}.

\method{DeePC} and its general \method{DDPC} variants have demonstrated promising experimental results for controlling systems beyond LTI settings in various real-world applications, \emph{e.g.}, robotics \cite{elokda2021data, fawcett2022distributed}, mixed traffic systems \cite{wang2023implementation,Shang2025Decentralized}, power systems \cite{lian2023adaptive,huang2021decentralized}. The recent work~\cite{berberich2022linear} has established some theoretical guarantees for  \method{DDPC} in nonlinear systems. For non-deterministic or nonlinear systems, more offline and online collected data are needed to increase the width (\emph{i.e.}, column number) and depth (\emph{i.e.}, row number) of the trajectory library, so that an accurate enough data-driven representation can be constructed. The benefits of increasing its width are well-recognized in the literature \cite{zhang2023dimension,wang2023deep,lian2023adaptive,wang2023implementation}, and the recent works have further emphasized the importance of enlarging its depth \cite{shang2024willems, lawrence2024deep}.

\begin{figure*}[t]
    \centering
    \includegraphics[width = 0.95\textwidth]{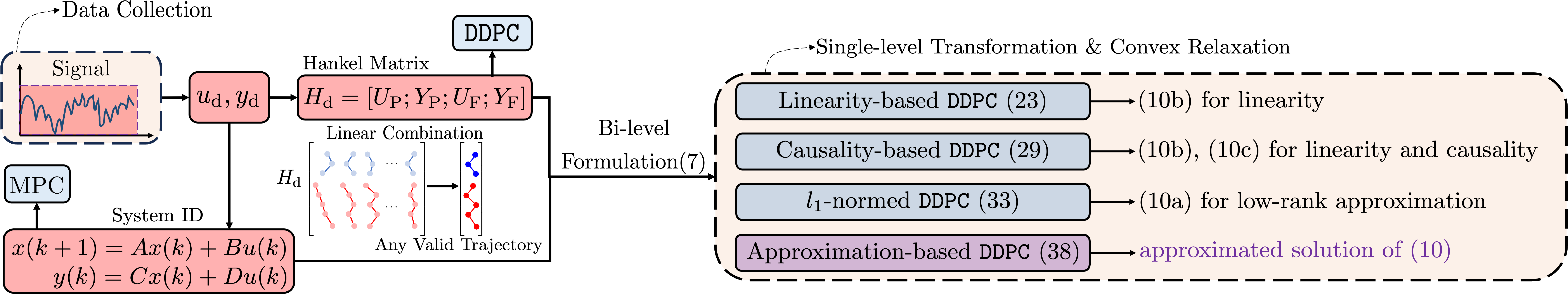}
    \caption{Schematic of data-driven predictive control (\method{DDPC}), which starts by collecting data (usually noisy) from the real system. Indirect methods identify a parametric model, while \method{DDPC} forms a Hankel matrix as the trajectory library for predictive control. The bi-level formulation \eqref{eqn:bi-level} integrates system ID techniques for trajectory library in \method{DDPC}. We introduce a series of convex relaxation \eqref{eqn:p-DeePC-p}, \eqref{eqn:p-DeePC-causal}, \eqref{eqn:l1-exact}  and an approximation \eqref{eqn:DeePC-SVD-Iter} for the bi-level formulation.}
    \label{fig:WorkFlow}
    \vspace{-2mm}
\end{figure*}

Since the original \method{DeePC} in~\cite{coulson2019data}, numerous regularization strategies for \method{DDPC} have been developed~\cite{breschi2023data,dorfler2022bridging,huang2021quadratic,huang2021decentralized,wang2023deep,sader2023causality,yin2021maximum}. Early approaches based on $l_1$- and $l_2$-norm penalties were primarily heuristic, which aim to improve empirical control performance and ensure numerical stability for systems beyond LTI settings~\cite{coulson2019data, huang2021quadratic, huang2021decentralized, wang2023deep}. A novel projection-based regularizer was later proposed in~\cite{dorfler2022bridging}, establishing a formal connection between \method{DeePC} and \method{SPC}; in fact, the two formulations are equivalent when the weighting parameter is sufficiently large. Another line of work introduced $\gamma$-\texttt{DDPC} \cite{breschi2023data}, which reformulates the original data-driven representation via an LQ factorization and introduces a new decision variable~$\gamma$. Regularization is then applied directly to $\gamma$, which offers a potentially transparent interpretation of the regularization effects. Moreover, it is shown in \cite[Theorem 2]{sader2023causality} that $\gamma$-\texttt{DDPC} with $l_2$ regularization and a sufficiently large weight is equivalent to \method{SPC}. This framework was further extended in~\cite{sader2023causality} by incorporating causality constraints and designing an associated regularizer.
More recently, \cite{yin2021maximum} proposed a maximum-likelihood estimator that characterizes future input-output trajectories through an iterative algorithm. 

The advantages and limitations of direct and indirect data-driven methods have been extensively discussed; see the editorial column~\cite{dorfler2023data} for an excellent overview. Recent works have further examined the role of regularization in connecting these two paradigms. A notable contribution is~\cite{dorfler2022bridging}, 
which analyzes the effect of regularization in a principled way via a bi-level optimization framework. In this setting, indirect data-driven control is formulated as a bi-level problem involving both identification and control, and many regularized variants of \method{DDPC} (\emph{e.g.}, $l_1$-norm penalties and projection-based regularizers) can be interpreted as convex relaxations of this formulation. A similar analysis has been carried out for $\gamma$-\texttt{DDPC} in~\cite{sader2023causality}, where the identification task serves as the inner problem. An alternative perspective is established in~\cite{kladtke2023implicit}, which interprets the regularization term as an implicit predictor: the regularizer implicitly selects a model class for the data-driven representation. The recent work~\cite{chiuso2023harnessing} introduces the concept of the final control error (FCE). The proposed FCE-\method{DDPC} minimizes this measure and includes certain regularized \method{DDPC} and $\gamma$-\texttt{DDPC} as suboptimal instances~\cite{chiuso2023harnessing}. 

In this paper, we adopt a bi-level optimization framework, similar to~\cite{dorfler2022bridging}. We aim to study the interplay between direct and indirect data-driven control for systems beyond LTI settings, especially for the effect of regularizers. In this formulation, data preprocessing is modeled as an inner optimization problem (identification), while predictive control is formulated as an outer optimization problem (online control). Figure~\ref{fig:WorkFlow} illustrates the overall process. Our main technical contributions are as follows. 

First, we highlight the role of the Hankel trajectory library $H$ as a non-parametric model within the bi-level framework. This choice not only facilitates the incorporation of common system ID constraints (\emph{e.g.}, SPC~\cite{favoreel1999spc}, low-rank approximation~\cite{markovsky2016missing}, and causal models~\cite{qin2005novel}) into the inner problem, but also enables the use of penalty methods to reduce the bi-level formulation to a single-level problem (Theorem~\ref{them:x-exact-main}). Second, under this bi-level framework, we establish three regularized \method{DDPC} formulations as convex relaxations of their corresponding bi-level optimization problems (Theorems \ref{cor:relax-spc}, \ref{cor:relax-causal}, \ref{cor:relax-lr}). In each case, the explicit projection of data onto LTI identification constraints is replaced with suitable regularizations that account for implicit identification. While this perspective aligns with~\cite{dorfler2022bridging}, we refine the analysis by clarifying conceptual inaccuracies, providing elementary self-contained proofs, and introducing a new causality-based regularizer (Theorem~\ref{cor:relax-causal}). Finally, all three regularized \method{DDPC} formulations only implicitly enforce a subset of system ID constraints while neglecting others. To address this limitation, we propose an approximation-based variant of \method{DDPC} (\method{A-DDPC}), which approximately solves the inner identification problem with all system ID constraints via an iterative algorithm. This leads to a refined data-driven representation that approximately captures linearity, causality, and the dominant trajectory space. Numerical experiments show that \method{A-DDPC} outperforms existing regularized \method{DDPC} approaches in reducing both bias and variance errors. 

Some preliminary results appeared in \cite{shang2024convex}.  In this work, we make several substantial new contributions: 1) we prove that the regularized \method{DDPC} formulations are convex relaxations of specific bi-level optimization problems and the associated regularizers are exact penalties, rather than merely convex approximations as discussed in \cite{shang2024convex}; 2) we explicitly introduce the causality requirement and systematically integrate it into the proposed bi-level optimization framework; and 3) we enhance numerical experiments for systems beyond the LTI setting. Finally, we have included the theoretical proofs omitted in \cite{shang2024convex}.

The rest of this paper is structured as follows. Section~\ref{sec:Prob-Meth} reviews the preliminaries. Section~\ref{section:bi-level-formulation} discusses the use of the trajectory matrix as a predictive model in indirect and direct approaches. In Section~\ref{sec:methods}, we present three regularized \method{DDPC} formulations as convex relaxations of suitable bi-level problems. Section~\ref{sec:SVD-Iter} develops an approximation-based \method{DDPC}. Numerical results are reported in Section~\ref{sec:results}, and Section~\ref{sec:conclusions} concludes the paper. Some technical proofs are provided in the appendix.

\textbf{Notation:} For a series of vectors $x_1, \ldots, x_n$ and matrices $A_1, \ldots, A_n$ with the same column size, we denote $\col(x_1, \ldots, x_n):= [x_1^\tr, \ldots, x_n^\tr]^\tr$ and $\col(A_1, \ldots, A_n):= [A_1^\tr, \ldots, A_n^\tr]^\tr$. We represent the $p$-norm of a vector $x$ as $\|x\|_p$ and the Frobenius norm of a matrix $A$ as $\|A\|_F$, and we denote $\|x\|_X^2 := x^\tr X x$ with a square matrix $X$. The null space and range space of $A$ are denoted by $\mathrm{Null}(A)$ and $\mathrm{Im}(A)$, respectively. We use $\mathbb{0}$ and $I$ to denote the zero matrix and identity matrix with compatible dimensions. We represent the open $p$-norm ball centered at $a \in \mathbb{R}^n$ with radius $r$ as $\mathcal{B}_p(a;r) := \{x \in \mathbb{R}^n \ | \ \|x-a\|_p < r\}$. We use $\mathcal{A}^\circ$ to denote the interior of the set $\mathcal{A}$.

\section{Preliminaries}
\label{sec:Prob-Meth}
This section reviews model-based predictive control, the fundamental Lemma \cite{willems2005note} from a behavioral~perspective, and a basic data-driven predictive control~in~\cite{coulson2019data}. 

\subsection{LTI systems and model predictive control} \label{subsection:LTI}
Consider a discrete-time LTI system:
\vspace{-2mm}
\begin{equation}
\label{eqn:LTI-sys}
    \begin{aligned}
    \bar{x}(k+1) &= A \bar{x}(k) + B \bar{u}(k), \\
    \bar{y}(k) &= C \bar{x}(k) + D \bar{u}(k),
    \end{aligned} 
\vspace{-2mm}
\end{equation}
where the state, input, output at time $k$ are $\bar{x}(k) \in \mathbb{R}^n$, $\bar{u}(k) \in \mathbb{R}^m$, and $\bar{y}(k) \in \mathbb{R}^p$, respectively. Throughout this paper, we assume that $(A, B)$ is controllable and $(A, C)$ is observable. The lag of the LTI system \eqref{eqn:LTI-sys} is defined as the minimum integer $l \in \mathbb{N}$ such that its observability matrix $\col(C, CA, \ldots, CA^{l-1})$ has full column rank $n$. It is known that $l \leq n$ when $(A,C)$ is observable. 
 
Given a desired reference trajectory $y_\textrm{r} \in \mathbb{R}^{pN}$ with horizon $N >0$, input constraint set $\mathcal{U} \subseteq \mathbb{R}^m$, output constraint set $
\mathcal{Y} \subseteq \mathbb{R}^p$, we aim to design control inputs such that the system output tracks the reference trajectory. In model predictive control (MPC), this is achieved, at time $t$, by solving the receding horizon predictive control
\vspace{-2mm}
\begin{subequations}
\label{eqn:predictive-control}
\begin{align}
    \min_{x, u, y } \;\, & \sum_{k=t}^{t+N-1}\big(\|y(k) - y_\textrm{r}(k)\|_Q^2 + \|u(k)\|_R^2\big) \nonumber \\ 
    \mathrm{subject~to} \;\, & x(k+1) = A x(k)+ B u(k),  \label{eqn:predictive-control-a} \\
    & y(k) = {C} x(k) + {D} u(k), \label{eqn:predictive-control-b} \\
    & x(t) = x_t, \label{eqn:predictive-control-c}\\
    & u(k) \!\in\! \mathcal{U}, y(k) \!\in\! \mathcal{Y},  k = t, \ldots, t\!+\!N \!-\!1, \label{eqn:predictive-control-d}
\end{align} 
\end{subequations}
\noindent where $x_t \! \in \! \mathbb{R}^n$ is the initial state at time $t$, $x(k), u(k)$ and $y(k)$ denote the predicted state, input, and output at time $k$, and $R$ and $Q$ are positive definite cost matrices. According to \eqref{eqn:predictive-control-a}-\eqref{eqn:predictive-control-c}, $\{x(k),u(k),y(k)\}_{k=t}^{t+N-1}$ is a \textit{predicted} trajectory of the system \eqref{eqn:LTI-sys} with an initial state $x_t$ at time $t$ using the system model \eqref{eqn:LTI-sys}. Equation \eqref{eqn:predictive-control-d} enforces the constraints on the predicted trajectory, and we assume $\mathcal{U}$ and $\mathcal{Y}$ are convex sets. To simplify notation and without loss of generality, we consider a regulation problem (\emph{i.e.}, $y_\textrm{r} = \mathbb{0}_{pN}$) for the rest of the discussions. It is clear that \eqref{eqn:predictive-control} is a convex optimization problem (it is a quadratic program when $\mathcal{U}$ and $\mathcal{Y}$ are polytopes), which admits an efficient solution when the model for \eqref{eqn:LTI-sys} is known, \emph{i.e.},
matrices $A$, $B$, $C$ and $D$ are known.  

In this work, we focus on the case where both the system model \eqref{eqn:LTI-sys} and the initial condition $x_t$ are unknown. Instead, we have access to 
\begin{itemize}
    \item \textit{offline data}, \emph{i.e.}, a length-$T$ pre-collected input and output trajectory $u_\textnormal{d} = \textnormal{col}(u_\textnormal{d}(1),\ldots, u_\textnormal{d}(T)) \in \mathbb{R}^{mT}$, $
    y_\textnormal{d} = \textnormal{col}(y_\textnormal{d}(1)$, $ \ldots, y_\textnormal{d}(T)) \in \mathbb{R}^{pT}$ from \eqref{eqn:LTI-sys}; 
    \item  \textit{online data}, \emph{i.e.}, the most recent past input and output sequence of length-$T_\ini$. 
\end{itemize}

Then, problem \eqref{eqn:predictive-control} can be implemented by either indirect system identification and model-based control~\cite{kouvaritakis2016model} 
or direct data-driven predictive control, such as the so-called \method{DeePC} \cite{coulson2019data} and its related approaches~\cite{dorfler2022bridging,markovsky2021behavioral, breschi2023data}. The advantages and limitations of these two classes of methods have been extensively discussed in the literature \cite{dorfler2023data}.

\subsection{Direct data-driven predictive control} \label{subsection:DeePC}
\label{subsec:DeePC}
We here review the notion of \textit{persistent excitation} (PE), which ensures the offline data is sufficiently rich. 
\begin{definition}[Persistently Exciting]
\label{def:Hankel-struct}
    A sequence of data points $\omega  =  \textnormal{col}(\omega(1),\omega(2), \ldots, \omega(T))$ of the length $T$ is persistently exciting (PE) of order $L$ ($L < T$) if its associated Hankel matrix with depth $L$, 
    \vspace{-4mm} 
    \[
    \mathcal{H}_L(\omega) = \begin{bmatrix}
        \omega(1) & \omega(2) & \cdots & \omega(T-L+1) \\
        \omega(2) & \omega(3) & \cdots & \omega(T-L+2) \\
        \vdots    & \vdots    & \ddots & \vdots \\
        \omega(L) & \omega(L+1) & \cdots & \omega(T)
    \end{bmatrix}, \vspace{-3.5mm}
    \] 
    has full row rank. 
\end{definition}

The following result from \cite{{willems2005note}}, commonly known as \textit{the fundamental lemma}, forms the foundation of many recent results of direct data-driven predictive control. 

\begin{lemma}[Fundamental Lemma~\cite{willems2005note}] \label{lemma:Fundamental}
    Suppose~\eqref{eqn:LTI-sys} is controllable. Given a length-$T$ input/output trajectory $u_\textnormal{d} \in \mathbb{R}^{mT}$ and $
    y_\textnormal{d} \in \mathbb{R}^{pT}$ where $u_\textnormal{d}$ is PE of order $L+n$, then a length-$L$ input/output sequence $\{u_\textnormal{s}(k), y_\textnormal{s}(k)\}_{k=0}^{L-1}$ is a valid trajectory of~\eqref{eqn:LTI-sys} if and only if there exists a $g \in \mathbb{R}^{T-L+1}$ such that \vspace{-2mm}
    \begin{equation} 
    \label{eqn:FundaLemma}
    \begin{bmatrix}
    \mathcal{H}_L(u_\textnormal{d})\\
    \mathcal{H}_L(y_\textnormal{d})
    \end{bmatrix} g
    =
    \begin{bmatrix}
    u_\textnormal{s}\\
    y_\textnormal{s}
    \end{bmatrix}. \vspace{-2mm}
    \end{equation}
    If $L$ is not smaller than the lag of the system~\eqref{eqn:LTI-sys}, matrix $\textnormal{col}(\mathcal{H}_L(u_\textnormal{d}), \mathcal{H}_L(y_\textnormal{d}))$ has rank $mL+n$.
\end{lemma}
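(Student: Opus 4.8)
The plan is to reduce the statement to one algebraic identity plus one rank fact, after which both the ``if and only if'' and the rank claim follow quickly. Fix a minimal state-space realization of \eqref{eqn:LTI-sys}, let $x_\textnormal{d}(1),\dots,x_\textnormal{d}(T)$ be the state sequence that generates the data, and collect $X_\textnormal{d}=[\,x_\textnormal{d}(1)\ \cdots\ x_\textnormal{d}(T-L+1)\,]\in\mathbb{R}^{n\times(T-L+1)}$. Write $\mathcal{O}_L=\col(C,CA,\dots,CA^{L-1})$ for the extended observability matrix and $\mathcal{T}_L$ for the block lower-triangular Toeplitz matrix of Markov parameters, so that the length-$L$ output of \eqref{eqn:LTI-sys} from state $x$ under input $u_\textnormal{s}$ is $\mathcal{O}_L x+\mathcal{T}_L u_\textnormal{s}$. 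Expanding \eqref{eqn:LTI-sys} column-by-column along the Hankel matrices yields
\[
\begin{bmatrix}\mathcal{H}_L(u_\textnormal{d})\\ \mathcal{H}_L(y_\textnormal{d})\end{bmatrix}
=\begin{bmatrix}I & 0\\ \mathcal{T}_L & \mathcal{O}_L\end{bmatrix}
\begin{bmatrix}\mathcal{H}_L(u_\textnormal{d})\\ X_\textnormal{d}\end{bmatrix} .
\]
The technical core, which I would isolate as a claim, is that $\col(\mathcal{H}_L(u_\textnormal{d}),X_\textnormal{d})$ has full row rank $mL+n$; here the persistency-of-excitation hypothesis and controllability enter.

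Granting the claim, everything else is short. For the ``if'' direction (which needs no persistency of excitation), suppose $\col(\mathcal{H}_L(u_\textnormal{d}),\mathcal{H}_L(y_\textnormal{d}))\,g=\col(u_\textnormal{s},y_\textnormal{s})$: column $j$ of the data Hankel matrix is the length-$L$ trajectory of \eqref{eqn:LTI-sys} started at $x_\textnormal{d}(j)$, so by linearity of \eqref{eqn:LTI-sys} the combination $\col(u_\textnormal{s},y_\textnormal{s})$ is again a valid trajectory, with initial state $X_\textnormal{d}g$ (equivalently, right-multiply the displayed identity by $g$). Conversely, if $(u_\textnormal{s},y_\textnormal{s})$ is a valid length-$L$ trajectory there is a state $x_\textnormal{s}$ with $y_\textnormal{s}=\mathcal{O}_L x_\textnormal{s}+\mathcal{T}_L u_\textnormal{s}$; using the claim, choose $g$ with $\col(\mathcal{H}_L(u_\textnormal{d}),X_\textnormal{d})\,g=\col(u_\textnormal{s},x_\textnormal{s})$, whence $\mathcal{H}_L(u_\textnormal{d})g=u_\textnormal{s}$ and, by the identity, $\mathcal{H}_L(y_\textnormal{d})g=\mathcal{T}_L u_\textnormal{s}+\mathcal{O}_L x_\textnormal{s}=y_\textnormal{s}$. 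Finally, when $L$ is at least the lag, $\mathcal{O}_L$ has full column rank $n$, so the left factor in the identity has full column rank $mL+n$; since left-multiplication by a full-column-rank matrix preserves rank, $\mathrm{rank}\,\col(\mathcal{H}_L(u_\textnormal{d}),\mathcal{H}_L(y_\textnormal{d}))=\mathrm{rank}\,\col(\mathcal{H}_L(u_\textnormal{d}),X_\textnormal{d})=mL+n$.

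The remaining step, proving the claim, is the part I expect to be the main obstacle. Persistency of excitation of order $L+n$ already forces $T-L+1\ge mL+n$ and makes $\mathcal{H}_L(u_\textnormal{d})$ full row rank, so any nonzero left annihilator $\col(\eta,\xi)$ with $\eta\in\mathbb{R}^{mL}$, $\xi\in\mathbb{R}^n$ must have $\xi\ne 0$. Assuming such an annihilator, i.e. that $\xi^\tr x_\textnormal{d}(j)$ equals a fixed linear functional of the length-$L$ input window starting at $j$ for every admissible $j$, I would iterate $x_\textnormal{d}(j+1)=Ax_\textnormal{d}(j)+Bu_\textnormal{d}(j)$ to show inductively that $j\mapsto\xi^\tr A^k x_\textnormal{d}(j)$ is a fixed linear functional of the length-$(L+k)$ input window, then apply the Cayley--Hamilton relation for $A$ over $k=0,\dots,n$ to produce a linear functional of the length-$(L+n)$ input window that vanishes on every column of $\mathcal{H}_{L+n}(u_\textnormal{d})$; controllability of $(A,B)$ is precisely what rules out this functional being identically zero, which contradicts persistency of excitation of order $L+n$. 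Tracking the coefficients of this functional through the recursion and disposing of the degenerate subcases where $\eta$ has a shorter effective support is the genuine technical heart of the Fundamental Lemma, and in a self-contained write-up I would either carry out this bookkeeping in full or invoke \cite{willems2005note} for it.
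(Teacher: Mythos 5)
The paper does not prove this lemma at all: it is imported verbatim from \cite{willems2005note}, so there is no in-paper argument to compare against. Your proposal is a correct outline of the now-standard state-space proof (the route of the modern literature rather than Willems et al.'s original behavioral/polynomial-module argument): the factorization $\col(\mathcal{H}_L(u_\textnormal{d}),\mathcal{H}_L(y_\textnormal{d}))=\bigl[\begin{smallmatrix}I&0\\ \mathcal{T}_L&\mathcal{O}_L\end{smallmatrix}\bigr]\col(\mathcal{H}_L(u_\textnormal{d}),X_\textnormal{d})$ is right, the ``if'' direction by linearity and the ``only if'' direction by solving $\col(\mathcal{H}_L(u_\textnormal{d}),X_\textnormal{d})g=\col(u_\textnormal{s},x_\textnormal{s})$ both follow once the key rank claim is granted, and the rank statement for $L\ge\mathrm{lag}$ follows because the left factor then has full column rank (observability of $(C,A)$, assumed throughout the paper, is what makes $\mathcal{O}_L$ injective there). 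Your sketch of the key claim is also the correct one, and the bookkeeping you defer does close: writing $f_k(j)=\xi^\tr A^k x_\textnormal{d}(j)$ and iterating the state recursion gives $f_k(j)=-\eta^\tr U_{j+k}-\sum_{i=0}^{k-1}\xi^\tr A^{k-1-i}Bu_\textnormal{d}(j+i)$, and after Cayley--Hamilton the coefficient of $u_\textnormal{d}(j+m)$ for $m\ge n$ is $-\sum_k c_k\eta_{m-k}^\tr$; requiring these to vanish from $m=L+n-1$ downward forces $\eta=0$ block by block, after which the coefficients for $m<n$ reduce to $-\xi^\tr B,-\xi^\tr AB,\dots$ and controllability with $\xi\ne 0$ yields the contradiction with PE of order $L+n$. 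Two small points worth making explicit in a full write-up: (i) that PE of order $L+n$ implies $\mathcal{H}_L(u_\textnormal{d})$ has full row rank (the first $mL$ rows of $\mathcal{H}_{L+n}(u_\textnormal{d})$ sit inside $\mathcal{H}_L(u_\textnormal{d})$ restricted to its first $T-L-n+1$ columns), which is what guarantees $\xi\ne 0$ for any left annihilator; and (ii) that the ``valid trajectory'' in the converse direction supplies the state $x_\textnormal{s}$ by definition of the behavior, so no extra hypothesis is needed there.
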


The fundamental lemma gives a parameterization of all finite-dimensional input/output trajectories of \eqref{eqn:LTI-sys} using only one offline trajectory $\{u_\textnormal{d},y_\textnormal{d}\}$. In particular, the image of the Hankel matrix in \eqref{eqn:FundaLemma} is the same as the set of all system trajectories of length $L$. 

With the fundamental lemma, we can use \eqref{eqn:FundaLemma} to build a predictive model to replace \eqref{eqn:predictive-control-a}-\eqref{eqn:predictive-control-c}. This is the basic idea in \method{DeePC}~\cite{coulson2019data}. In particular, we consider the input/output sequence with length $L$ satisfying $L = T_\ini  +  N$. The Hankel matrix formed by the offline data in \eqref{eqn:FundaLemma} is partitioned as
\vspace{-2mm}
 \begin{equation}
\label{eqn:Hankel_partition}
  \begin{bmatrix}
    U_{\textrm{P}} \\
    U_{\textrm{F}} 
\end{bmatrix} := \mathcal{H}_L(u_\textrm{d}), \quad 
\begin{bmatrix}
    Y_{\textrm{P}} \\
    Y_{\textrm{F}} 
\end{bmatrix} := \mathcal{H}_L(y_\textrm{d}),
\vspace{-2mm}
\end{equation}
where $U_\p$ and $U_\f$ consist of the first $T_\ini$ rows and the last $N$ rows of $\mathcal{H}_L(u_\D)$, respectively (similarly for $Y_\p$ and $Y_\f$).  We denote the most recent past input trajectory of length $T_\ini$ and the future input trajectory of length $N$, respectively, as 
$$
\begin{aligned}
u_\ini &= \col(\bar{u}(t-T_\textrm{ini}),\bar{u}(t-T_\textrm{ini}+1),\ldots, \bar{u}(t-1)), 
\\ u &= \col(u(t), u(t+1),\ldots, u(t+N-1)), 
\end{aligned}
$$
(similarly for $y_\textrm{ini}, y$). Then, Lemma \ref{lemma:Fundamental} ensures that the sequence $\col(u_\textrm{ini}, y_\textrm{ini}, u, y)$ is a valid trajectory of~\eqref{eqn:LTI-sys} if and only if there exists $g \in \mathbb{R}^{T-T_\textrm{ini} -N+1}$ such that \vspace{-2mm}
\begin{subequations}\label{eqn:predictor}
\begin{equation}
\label{eqn:predictor-a}
H_{\mathrm{d}} g= \col(u_\textrm{ini}, y_\textrm{ini}, u, y), 
\end{equation}
where, for notational simplicity, we denote  
\vspace{-2mm}
\begin{equation} \label{eqn:predictor-data-matrix}
    H_{\mathrm{d}} := \col(U_\p, Y_\p, U_\f, Y_\f), 
    \vspace{-2mm}
\end{equation}
as the Hankel matrix associated with pre-collected data $\{u_\mathrm{d}, y_\mathrm{d}\}$; see \eqref{eqn:Hankel_partition}. 
\end{subequations}
If $T_{\mathrm{ini}}$ is larger or equal to the lag of~\eqref{eqn:LTI-sys}, $y$ is unique given any $u_\textrm{ini}$, $y_\textrm{ini}$ and $u$ in \eqref{eqn:predictor}. 

The basic \method{DeePC}~\cite{coulson2019data} utilizes \eqref{eqn:predictor} as the data-driven representation of \eqref{eqn:predictive-control-a}-\eqref{eqn:predictive-control-c} and reformulate problem~\eqref{eqn:predictive-control}~as
\vspace{-2mm}
\begin{equation}
\label{eqn:DeePC}
\begin{aligned}
\min_{g,u,y} \quad & \sum_{k=t}^{t+N-1}\big(\|y(k)\|_Q^2 + \|u(k)\|_R^2\big) \\
\mathrm{subject~to} \quad & \eqref{eqn:predictor}, u \in \mathcal{U}, y \in \mathcal{Y},
\end{aligned}
\end{equation} 
where we slightly abuse the notation and use $u \in \mathcal{U}, y \in \mathcal{Y}$ to denote input/output constraints \eqref{eqn:predictive-control-d}.

\subsection{Direct vs. indirect data-driven control} 

For LTI systems with noise-free data, model-based control  \eqref{eqn:predictive-control} and \method{DeePC} \eqref{eqn:DeePC} are equivalent (see~\cite[Theorem 5.1]{coulson2019data}), thanks to the fundamental lemma. In this case, the Hankel matrix $H_{\mathrm{d}}$ in \eqref{eqn:predictor-data-matrix}, also referred to as the \textit{trajectory matrix} (since each of its columns represents a valid system trajectory), serves as a non-parametric model. 

However, data collected from practical systems is rarely noise-free. In particular, the offline data $u_{\mathrm{d}}, y_{\mathrm{d}}$ and the resulting trajectory library $H_{\mathrm{d}}$ in \eqref{eqn:predictor-data-matrix} may be corrupted by 1) ``variance'' noises that enter the process dynamics and output measurement,  2) and ``bias'' errors induced by nonlinear dynamics beyond LTI \cite{dorfler2022bridging}. To address these issues, various regularization and data preprocessing techniques have been proposed to extend the basic \method{DeePC} \eqref{eqn:DeePC}. These include $l_1$/$l_2$ regularization \cite{dorfler2022bridging}, $\gamma$-\texttt{DDPC} \cite{sader2023causality}, low-rank approximation~\cite{markovsky2016missing}, singular-value decomposition~\cite{zhang2023dimension}. While some works \cite{zhang2023dimension,dorfler2022bridging,mattsson2024equivalence,sader2023causality} have explored the relationship among different schemes, most of them are carried out on a case-by-case basis. One notable exception~is~\cite{dorfler2022bridging}, which employs a principled bi-level optimization framework to investigate the interplay between direct and indirect data-driven control. 

We here adopt the same bi-level optimization principle as~\cite{dorfler2022bridging} to systematically explore the effects of regularization 
in direct data-driven predictive control (\method{DDPC}). In particular, we consider a nested bi-level \method{DDPC}, \vspace{-2mm}
\begin{subequations} 
\label{eqn:bi-level}
\begin{align}
    \min_{\substack{g,\sigma_y \in \Gamma, 
    \\ u \in \mathcal{U} ,y \in \mathcal{Y}}} \quad & \|y\|_Q^2 + \|u\|_R^2  + \lambda_y \|\sigma_y\|_2^2  \nonumber 
    \\
    \mathrm{subject~to} \quad & {H}^* g=\col(u_\ini, y_\ini+\sigma_y, u, y ), \label{eqn:bi-level-pred} \\
    &\textrm{where} \quad  {H}^* \in \arg \min_{{H} \in \mathcal{S}} \; J_{\mathrm{id}}({H},H_{\mathrm{d}}). \label{eqn:bi-level-inner} 
\end{align} 
\vspace{-4mm}
\end{subequations} \\
The inner problem \eqref{eqn:bi-level-inner} processes the offline data $H_{\mathrm{d}}$ (see \eqref{eqn:predictor}), which may be collected from non-deterministic LTI systems or nonlinear systems, and then the resulting predictor is used in the outer online control \eqref{eqn:bi-level-pred}. This bi-level formulation, often referred to as indirect \method{DDPC}, is modular and consists of two well-separated components: an inner identification layer and an outer model-based predictive control layer. Before discussing its connections and distinctions with existing schemes, let us further clarify the notation in \eqref{eqn:bi-level}. 
In the outer problem, the term $\|y\|_Q^2+\|u\|_R^2$ denotes the usual one $\sum_{k=t}^{t+N-1}\big(\|y(k)\|_Q^2 + \|u(k)\|_R^2\big)$, and we have introduced a slack variable $\sigma_y$ with constraint $\Gamma$ and a regularization term $\|\sigma_y\|_2^2$ with $\lambda_y > 0$. This ensures constraint feasibility despite noise, as used in~\cite{markovsky2021behavioral,berberich2020data}.  
In the inner problem, $J_{\mathrm{id}}(\cdot,\cdot)$ denotes a suitable loss function, and the constraint ${H} \in\mathcal{S}$ enforces prior data structures. We will clarify our choice of ${H} \in\mathcal{S}$ in Section \ref{section:bi-level-formulation}. 

\begin{remark}[Comparison with \cite{dorfler2022bridging}]  \label{remark:comparison}
    The~bi-level~\method{DDPC} \eqref{eqn:bi-level} is inspired by the work \cite{dorfler2022bridging}, which argues that ``direct and regularized data-driven control can be derived as a convex relaxation of the indirect approach''. While adopting the same general framework, our work introduces two key distinctions. Conceptually, the~bi-level \method{DDPC} \eqref{eqn:bi-level} explicitly emphasizes the use of a \textit{non-parametric} trajectory matrix as the predictive model, whereas \cite{dorfler2022bridging} primarily considers a more general and abstract behavioral LTI setting in the inner problem. Technically, the analysis in \cite{dorfler2022bridging} 
    relies on arguments that require additional assumptions or clarifications, and some claims may not hold without these refinements. Also, the exact penalty arguments in \cite{dorfler2022bridging} rely on \cite[Proposition 2.4.3]{clarke1990optimization}, which requires adaptation in the \method{DDPC} context. Inspired by the notion of partial calmness, our proof is transparent and self-contained. We will clarify these points in more detail in Sections \ref{section:bi-level-formulation}~and~\ref{sec:methods}. 
\end{remark}

\section{Trajectory matrix as a predictive model: indirect and direct approaches}  \label{section:bi-level-formulation}

In this section, we first detail the inner identification~\eqref{eqn:bi-level-inner} and then discuss how to bridge the bi-level \method{DDPC}~\eqref{eqn:bi-level} with a single-level optimization via penalty methods. 

\subsection{Behavior-based identification for predictive control} 
From the offline noisy data $H_{\mathrm{d}}$, we aim to get a new trajectory library $H^*$ for predictive control
\vspace{-1mm} 
\[
{H}^*:=\col({U}_\p^*, {Y}_\p^*, {U}_\f^*, {Y}_\f^*),  
\vspace{-1mm}
\]
where each column of $H^*$ is a ``purified'' trajectory of the system. Let $\mathcal{W}_L$ (where $L = T_\ini + N$) denotes the space of all possible length-$L$ trajectories  of an LTI system,~\emph{i.e.}, 
\vspace{-2mm}
\[
\mathcal{W}_L = \left\{\!\begin{bmatrix}
            \bar{u}\\\bar{y}
        \end{bmatrix}  \mid   \exists x_0 \in \mathbb{R}^{n}, \; \eqref{eqn:LTI-sys} \; \text{holds with} \ \bar{x}(0) = x_0 \!\right\}\!.
\vspace{-1mm}
\]
Ideally, we may consider the following problem 
\vspace{-2mm}
\begin{equation}
    \label{eqn:identification-traj-space}
    \begin{aligned}
       H^* = \argmin_{H} \quad & J_{\mathrm{id}}({H},H_{\mathrm{d}}) := \|H - H_{\mathrm{d}}\|_F^2 \\
        \mathrm{subject~to} \quad &  \mathrm{Im}(H) \subseteq \mathcal{W}_L.
    \end{aligned}
\vspace{-1mm} 
\end{equation}
If the offline data $H_{\mathrm{d}}$ is noise-free and satisfies the persistent excitation, the fundamental lemma ensures that $\mathrm{Im}(H_\mathrm{d}) = \mathcal{W}_L$. In this case, the optimal solution to \eqref{eqn:identification-traj-space} is $H^* = {H}_\mathrm{d}$. 
In general, the constraint $\mathrm{Im}(H) \subseteq \mathcal{W}_L$ is too abstract and not tractable. For an explicit expression, we have several necessary conditions. 
\begin{itemize}
    \item First, the trajectory matrix $H$ should be of \textit{low rank}, \emph{i.e.}, we have $\mathrm{rank}\, H^* \leq mL + n$. 
    \item Second, the trajectory matrix $H$ should satisfy \textit{linearity}, \emph{i.e.}, the future output is a linear function of the past data and future input, and we have 
    \vspace{-2mm} 
    \begin{equation}  \label{eq:linearity}
    {Y}_\f^* = K \ \col(U_\p, Y_\p, U_\f),
    \vspace{-2mm} 
    \end{equation}
    for some coefficient matrix $K$.  
    \item Third, the trajectory matrix $H$ should satisfy \textit{causality}, \emph{i.e.}, the coefficient matrix $K$ in \eqref{eq:linearity} should have a block partition  $K = \begin{bmatrix}
        K_{\mathrm{p}} \; K_{\mathrm{f}}
    \end{bmatrix}$, and $K_{\mathrm{f}} \in \mathcal{L}$ encodes a block-lower triangle structure for causality. 
    \item Finally, we may also impose a Hankel structure on $H^* \in \mathcal{H}$ if each column of $H_\D$ is a one-step shifted trajectory. Representing the trajectory library with a Hankel structure is widely used in both theory \cite{berberich2020data,berberich2022linear} and practical applications \cite{elokda2021data,wang2023implementation} of \method{DDPC}. 
\end{itemize}

We then relax \eqref{eqn:identification-traj-space} with the following  problem
\vspace{-2mm} 
\begin{subequations} 
\label{eqn:identification}
\begin{align}
     ({H}^*, K^*) \in \argmin_{\tilde{H}, K} \quad 
 &\|\tilde{H} - H_{\mathrm{d}}\|_F^2 \nonumber  \\
    \mathrm{subject~to} \quad & \mathrm{rank}(\tilde{H}) = mL+n \label{eqn:id-rank}, \\
    &  \tilde{Y}_\f = K \ \col(U_\p, Y_\p, U_\f), \label{eqn:id-rowsp} \\
    & K = \begin{bmatrix}
        K_{\mathrm{p}} \quad K_{\mathrm{f}}
    \end{bmatrix}, \ K_{\mathrm{f}} \in \mathcal{L},  \label{eqn:id-causal} \\ 
    & \tilde{H} \in \mathcal{H}. \label{eqn:id-Hankel}
\end{align}
\vspace{-5mm}
\end{subequations} \\ 
We thus consider \eqref{eqn:identification} as our inner identification in \eqref{eqn:bi-level-inner}. Note that the requirements of linearity \eqref{eqn:id-rowsp} and causality \eqref{eqn:id-causal} are typically used in causal SPC \cite{favoreel1999spc,fiedler2021relationship}, and the requirement of low rank \eqref{eqn:id-rank} and Hankel structure \eqref{eqn:id-Hankel} are often employed in low-rank approximation \cite{markovsky2016missing}. When the offline data $H_\mathrm{d}$ in \eqref{eqn:predictor} is collected from an LTI system \eqref{eqn:LTI-sys} with noise-free data, the unique optimal solution to \eqref{eqn:identification} is trivially ${H}^* = H_\mathrm{d}$. If $H_\mathrm{d}$ contains ``variance'' noises and/or ``bias'' errors, enforcing all constraints simultaneously makes the identification  \eqref{eqn:identification} difficult to solve due to its nonconvexity. 
We will demonstrate two strategies to address this challenge: 1) simplifying \eqref{eqn:identification} via only considering a subset of the constraints (Section \ref{sec:methods}) and 2) approximately solving \eqref{eqn:identification} via addressing each constraint sequentially (Section \ref{sec:SVD-Iter}). 

\begin{remark}
\label{rem: diff-bi-form}
    The inner problem \eqref{eqn:identification} aims to identify a trajectory matrix as a predictive model, which is slightly different from \cite{dorfler2022bridging}. Only constraints $\eqref{eqn:id-rank}, \eqref{eqn:id-rowsp}$ are discussed individually in \cite{dorfler2022bridging}. Problem \eqref{eqn:identification} is also related to the classical SPC. However, SPC focuses on an explicit multi-step predictor as $y = [K_p, K_f]\col(u_{\ini}, y_{\ini}, u)$. In contrast, our approach leverages a Hankel matrix as a non-parametric multi-step predictor. This shift enables the incorporation of additional constraints into \eqref{eqn:identification} and allows us to leverage recent closed-loop guarantees from \cite{berberich2024overview}, either by including terminal ingredients or by ensuring a sufficiently long prediction horizon in \eqref{eqn:bi-level}. When $H_\D$ comes from nonlinear systems, it generally contains the ``bias" error. In this case, we can further replace the system order $n$ in \eqref{eqn:id-rank} with a tunable parameter $n_\z$ (\emph{i.e.}, the right-hand side of \eqref{eqn:id-rank} becomes $mL+n_\z$ and $n_\z \ge n$). Then, ${H}^*$ may implicitly correspond to a high-dimensional linear system, which can be obtained from Koopman lifting techniques \cite{shang2024willems,shang2025dictionary}.
\end{remark}

\subsection{A direct approach via penalty methods}
\label{subsec:bi-to-single}

With the inner identification \eqref{eqn:identification}, the bi-level formulation \eqref{eqn:bi-level} is an indirect \method{DDPC}. Using penalty methods, the bi-level structure can be transformed into a single-level problem. This is a standard idea in bi-level optimization~\cite{ye1997exact} and has been recently used to analyze direct data-driven control in \cite{dorfler2022bridging}. We adopt a similar strategy here, but also highlight a subtle point that leads to some conceptual inaccuracies in \cite{dorfler2022bridging}.  

A key step involves replacing the inner optimization problem \eqref{eqn:identification} with equivalent (but implicit) constraints, which are then incorporated into the objective function via penalty methods. One conceptually simple way is to assume the existence of a set of optimality conditions  
$
H \in \mathcal{C}_{\mathrm{opt}},
$
where $\mathcal{C}_{\mathrm{opt}}$ denotes the optimality constraints for the inner problem~\eqref{eqn:identification}. Then, we can write problem~\eqref{eqn:bi-level} equivalently~as 
\vspace{-1mm}
\begin{subequations}
\label{eqn:bi-to-single}
\begin{align}
    \min_{\substack{g,\sigma_y \in \Gamma, {H}, \\ u \in \mathcal{U} ,y \in \mathcal{Y}}} \quad & \|y\|_Q^2 + \|u\|_R^2  + \lambda_y \|\sigma_y\|_2^2 \nonumber
    \\
    \mathrm{subject~to} \quad & {H} g=\col(u_\ini, y_\ini+\sigma_y, u, y ), \label{eqn:single-predict} \\
    &  {H} \in  \mathcal{C}_{\mathrm{opt}}.  \label{eqn:inner-opt}
\end{align}
\vspace{-5mm}
\end{subequations}\\
We note that \eqref{eqn:bi-to-single} is only of a conceptual use at this stage, as \eqref{eqn:inner-opt} may not be known explicitly. We next consider a continuous penalty function satisfying 
\vspace{-1mm}
\begin{equation}
\label{eqn:penalty}
\left\{ 
\begin{aligned}
p(H) = 0, & \quad \textrm{if} \ {H} \in  \mathcal{C}_{\mathrm{opt}}, \\
p(H) >0 , & \quad \textrm{if} \ {H} \notin  \mathcal{C}_{\mathrm{opt}}.
\end{aligned} \right. 
\vspace{-1mm}
\end{equation}
Now, we can write a penalized problem 
\begin{equation} \label{eq:penalized-problem}
    \begin{aligned}
    \min_{\substack{g,\sigma_y \in \Gamma, {H}, \\ u \in \mathcal{U} ,y \in \mathcal{Y}}} \quad & \|y\|_Q^2 + \|u\|_R^2  + \lambda_y \|\sigma_y\|_2^2 + \lambda p(H)
    \\
    \mathrm{subject~to} \quad & {H} g=\col(u_\ini, y_\ini+\sigma_y, u, y ). 
\end{aligned}
\end{equation}
Under a mild assumption (see \emph{e.g.}, \cite[Theorem 6.6]{ruszczynski2011nonlinear}), we can ensure that as the penalty parameter $\lambda \to \infty$, the optimal solution of \eqref{eq:penalized-problem} converges to an optimal solution of \eqref{eqn:bi-to-single}, \emph{i.e.}, the original bi-level problem \eqref{eqn:bi-level}. If we use a small penalty parameter $\lambda$, the single-level formulation \eqref{eq:penalized-problem} becomes a relaxation of \eqref{eqn:bi-level}, \emph{i.e.}, the optimal value of \eqref{eq:penalized-problem} is smaller than that in \eqref{eqn:bi-level}. 

The reasoning above critically relies on the conceptual optimality constraint $H \in \mathcal{C}_{\mathrm{opt}}$, associated with the inner problem \eqref{eqn:bi-level-inner}. {Dropping any constraint in \eqref{eqn:bi-level-inner} results in a different optimality set, meaning that the corresponding penalized single-level problem may no longer offer a valid relaxation. This subtle but important point may be easily overlooked; for example, some relaxation statements in \cite{dorfler2022bridging} are not fully accurate without further clarification. We consider a simple example below.}
\begin{example} \label{example:bi-level}
    Consider a simple bi-level formulation 
    \vspace{-2mm}
   \begin{subequations}
\label{eqn:bi-level-example}
\begin{align}
    \min_{x \in \mathbb{R}} \quad & x^2  \nonumber 
    \\
    \mathrm{subject~to} \quad & y^* x=1, \label{eqn:bi-level-outer-ex}  \\
    &\mathrm{where} \;  y^* \in \arg \min_{y \in \mathbb{R}} \; y^2 \label{eqn:bi-level-inner-ex} \\
    &\qquad \qquad \mathrm{subject~to} \;\;  2 \leq y \leq 4. \label{eqn:bi-level-inner-constraint}
\end{align} 
\vspace{-5mm}
\end{subequations} \\
This inner problem has an optimality condition as $y^* \in \mathcal{C}_{\mathrm{opt}} := \{2\}$. We consider a penalty function $p(y) = | y-2 |$, which satisfies \eqref{eqn:penalty}. Then, a penalized single-level optimization is
\vspace{-2mm}
\begin{equation} \label{eq:single-level-ex}
    \begin{aligned}
         \min_{x \in \mathbb{R}, y \in \mathbb{R}} \quad & x^2 + \lambda |y - 2|  
    \\
    \mathrm{subject~to} \quad & y x=1.   \\
    \end{aligned}
\vspace{-2mm}
\end{equation}
It can be verified that when $\lambda \ge \frac{1}{4}$ (see Fig. \ref{fig:pen_optimal} for illustration), \eqref{eq:single-level-ex} has the same optimal solution as \eqref{eqn:bi-level-example}. If $\lambda < \frac{1}{4}$, the optimal value of \eqref{eq:single-level-ex} is a lower bound of~\eqref{eqn:bi-level-example}. If we relax the constraint \eqref{eqn:bi-level-inner-constraint} as $y\leq 4$, then the inner problem has a different optimal solution $y^* = 0$. In this case, the outer problem in \eqref{eqn:bi-level-outer-ex} becomes infeasible. This illustrates that dropping constraints from the inner problem does not necessarily reduce the optimal value of the outer problem. This nuance was overlooked in \cite{dorfler2022bridging}.
\end{example}

\subsection{Shifting regularization from $H$ to $g$} \label{subsection:regularization-on-g}

The direct data-driven method \eqref{eq:penalized-problem} is conceptually useful but not practically implementable for two reasons: 1) the regularization is imposed on the Hankel matrix $H$, which is implicit; and 2) the constraint in \eqref{eq:penalized-problem} is bilinear in $H$ and $g$, which is computationally intractable. We here discuss how to shift the regularization from $H$ to $g$.

\begin{figure}[t]
\setlength{\abovedisplayskip}{0pt}
\centering
{\includegraphics[width=0.45\textwidth]{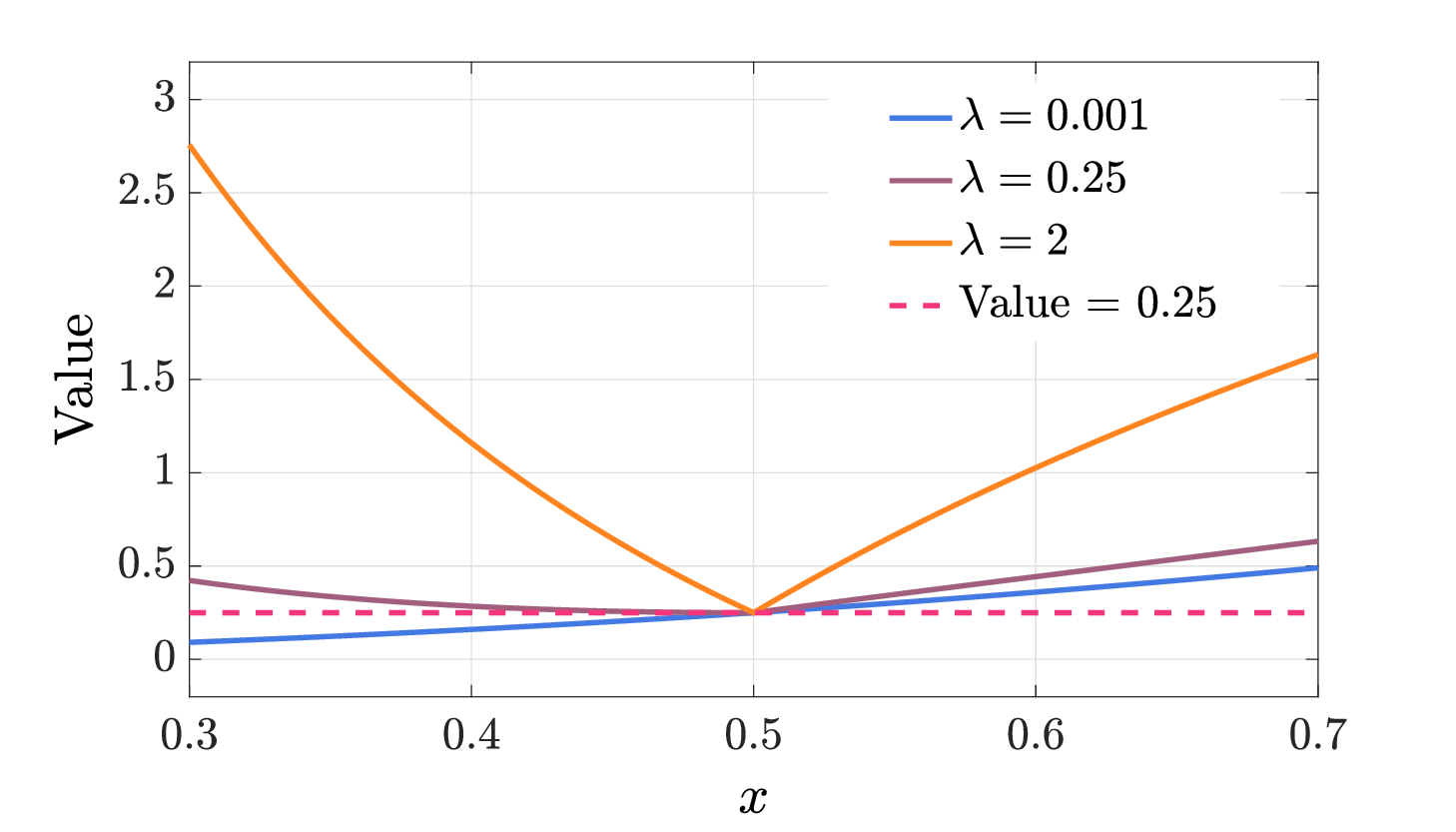}}
\vspace{-2mm}
\caption{{Influence of the penalty parameter $\lambda$ in  \eqref{eq:single-level-ex}, where we have replaced $y$ with $1/x$.}}
\label{fig:pen_optimal}
\end{figure}

Our general idea is to select some identification constraints from \eqref{eqn:id-rank}-\eqref{eqn:id-Hankel} such that the inner identification problem admits a simpler and potentially explicit optimality condition $\hat{\mathcal{C}}_{\mathrm{opt}}$. Then, we can impose some constraints $h:\mathbb{R}^{\bar{n}} \rightarrow \mathbb{R}$ on $g$ such that the set
\vspace{-1mm}
$$
\{(g,\sigma_y, u, y) \mid Hg=\col(u_\ini, y_\ini+\sigma_y, u, y ),  {H} \in  \hat{\mathcal{C}}_{\mathrm{opt}}\} 
\vspace{-1mm}
$$
is the same as 
\vspace{-1mm}
$$
    \{(g,\sigma_y, u, y) \mid \bar{H}g=\col(u_\ini, y_\ini+\sigma_y, u, y ), h(g) = 0 \}, 
\vspace{-1mm}
$$
where $\bar{H}$ is a fixed trajectory matrix. We next move the constraint on $g$ to the cost via regularization, leading to a direct \method{DDPC} 
\vspace{-1mm}
\begin{equation} \label{eq:penalized-problem-on-g}
    \begin{aligned}
    \min_{\substack{g,\sigma_y \in \Gamma, \\ u \in \mathcal{U} ,y \in \mathcal{Y}}} \quad & \|y\|_Q^2 + \|u\|_R^2  + \lambda_y \|\sigma_y\|_2^2 + \lambda_w |h(g)| 
    \\
    \mathrm{subject~to} \quad & \bar{H} g=\col(u_\ini, y_\ini+\sigma_y, u, y ). 
\end{aligned}
\vspace{-1mm}
\end{equation}
Section \ref{sec:methods} will detail three ways to derive~an explicit form~\eqref{eq:penalized-problem-on-g}, including regularization~using~projection-based norm, causality-based norm, and $l_1$ norm. 

Similar to \eqref{eq:penalized-problem}, to ensure the equivalence of the optimal solution, one may require the regularization parameter $\lambda_w$ in \eqref{eq:penalized-problem-on-g} to approach infinity. Fortunately, in our context, we can establish an exact penalty with a finite regularization parameter $\lambda_w$. In particular, consider a quadratic optimization problem of the form 
\vspace{-2mm}
\begin{equation}
\label{eqn:qp-c-main}
\begin{aligned}
 \min_{x_1 \in \mathcal{X}, x_2}  \ &  x_1^\tr M x_1  \\
\mathrm{subject~to} \ & A_1 x_1 + A_2 x_2 = b, \\
& \|Dx_2\|_p = 0, 
\end{aligned}
\vspace{-2mm}
\end{equation}
where $M$ is positive semidefinite, $A_1$, $A_2$, $b$, and $D$ are problem data of compatible dimensions, $\mathcal{X}$ is a convex set, and $\|\cdot\|_p$ is any $p$-norm. We note that $x_2$ plays the role of $g$ in \eqref{eq:penalized-problem-on-g}. 
We then consider a regularized version 
\vspace{-2mm}
\begin{equation}
\label{eqn:qp-pn-main}
\begin{aligned}
 \min_{x_1 \in \mathcal{X}, x_2}  \ &  x_1^\tr M x_1 + \lambda_w \|Dx_2\|_p  \\
\mathrm{subject~to} \ & A_1 x_1 + A_2 x_2 = b. \\
\end{aligned}
\vspace{-2mm}
\end{equation}

We next show that under mild assumptions, there exists a finite $\lambda_w >0$ such that \eqref{eqn:qp-c-main} and \eqref{eqn:qp-pn-main} are equivalent.

\begin{theorem}
\label{them:x-exact-main}
    Consider \eqref{eqn:qp-c-main} and \eqref{eqn:qp-pn-main}, where $\mathcal{X}$ is a convex set and $M$ is positive semidefinite. Then, for any fixed $\lambda_w \geq 0$, \eqref{eqn:qp-pn-main} is a convex relaxation of \eqref{eqn:qp-c-main}, \emph{i.e.}, the optimal value of \eqref{eqn:qp-pn-main} is no larger than that of \eqref{eqn:qp-c-main}. Further, if there exists an optimal solution $(x_1^*, x_2^*)$ for \eqref{eqn:qp-c-main} such that $x_1^* \in \mathcal{X}^\circ$, then there exists a finite $\lambda_w^* > 0$, such that \eqref{eqn:qp-c-main} and \eqref{eqn:qp-pn-main} are equivalent for all $\lambda_w > \lambda_w^*$, \emph{i.e.}, they have the same optimal solutions.
\end{theorem}

It is clear that \eqref{eqn:qp-pn-main} is a convex relaxation of \eqref{eqn:qp-c-main} since every optimal solution of \eqref{eqn:qp-c-main} is feasible for \eqref{eqn:qp-pn-main} with the same objective value. One key fact in this theorem is that we can find a finite penalty parameter such that \eqref{eqn:qp-c-main} and \eqref{eqn:qp-pn-main} are equivalent. Theorem \ref{them:x-exact-main} confirms that $\|Dx_2\|_p$ is an exact penalty function for the quadratic problem~\eqref{eqn:qp-c-main}. Our proof adapts the notion of partial calmness \cite{ye1997exact}. Let $f(x) = x^\tr M x$. The key idea is to show that any feasible point $(x_1, x_2)$ of the penalized problem around a local region of $(x_1^*, x_2^*)$ can be ``repaired'' to a nearby feasible point $(\tilde{x}_1, \tilde{x}_2)$ satisfying $Dx_2=0$. Then, we illustrate the difference between $f(x_1)$ and $f(\tilde{x}_1)$ is controlled proportionally to $\|Dx_2\|_p$. Combining it with the fact that $f(\tilde{x}_1) \ge f(x_1^*)$ yields a local lower bound of the form $f(x_1) + \lambda_w \|Dx_2\|_p\ge f(x_1^*)+(\lambda_w-\lambda_w^*)\|Dx_2\|_p$, implying that for any $\lambda_w>\lambda_w^*$ it is never optimal to keep a nonzero constraint violation. Finally, the convexity of \eqref{eqn:qp-pn-main} promotes this local argument to a global equivalence. We present the full proof in Appendix \ref{appendix:exact-penalty-proof}.  

To illustrate Theorem \ref{them:x-exact-main}, we randomly generate problem instances of \eqref{eqn:qp-c-main} and \eqref{eqn:qp-pn-main}. As shown in Figure \ref{fig:pen_p_norm}, all $p$-norms render $\|Dx\|_p$ an exact penalty. Note that smaller values of $p$ lead to faster convergence of the optimal value of \eqref{eqn:qp-pn-main} toward that of \eqref{eqn:qp-c-main} as $\lambda_w$ increases. Since $\|Dx\|_p$ decreases monotonically with $p \in [1, \infty]$, larger $p$ values impose weaker penalties and therefore require larger $\lambda_w$ to guarantee equivalence between the two problems.

\section{Regularization in direct data-driven control via convex relaxations}
\label{sec:methods}

In this section, we apply Theorem \ref{them:x-exact-main} and derive three direct \method{DDPC} strategies via convex relaxations, including regularization using projection-based norm, causality-based norm, and $l_1$ norm. The~projection-based norm and $l_1$ norm are widely used in the literature (see \cite{dorfler2022bridging} and its references), and the causality-based norm is~new.

As indicated in Section \ref{subsection:regularization-on-g}, our general idea is to select some identification constraints from \eqref{eqn:id-rank}-\eqref{eqn:id-Hankel} such that the inner identification problem admits a simpler and potentially explicit optimality condition.

\subsection{Projection-based norm and a Hankel-form SPC} 
\label{subsec:SPC}

We first consider the following bi-level problem 
\vspace{-2mm}
\begin{align}
    \min_{\substack{g,\sigma_y \in \Gamma, 
    \\ u \in \mathcal{U} ,y \in \mathcal{Y}}} \quad & \|y\|_Q^2 + \|u\|_R^2  + \lambda_y \|\sigma_y\|_2^2  \nonumber 
    \\
    \mathrm{subject~to} \quad & {H}^* g=\col(u_\ini, y_\ini+\sigma_y, u, y ),  \label{eqn:bi-level-SPC} \\
    &\textrm{where} \;\;  {H}^* \in \argmin_{\tilde{H}, K} \;\; 
 \|\tilde{H} - H_{\mathrm{d}}\|_F^2 \nonumber  \\
   & \qquad \qquad \mathrm{subject~to} \;\;  \eqref{eqn:id-rowsp}, \nonumber
   \vspace{-7mm}
\end{align} 
which only considers the SPC constraint \eqref{eqn:id-rowsp} for the inner identification. This bi-level \method{DDPC} \eqref{eqn:bi-level-SPC} is much simpler than the general one \eqref{eqn:bi-level}. However, for a fixed $\lambda_y > 0$, the optimal value of \eqref{eqn:bi-level-SPC} is not necessarily lower than that of \eqref{eqn:bi-level}. This is because dropping constraints in the inner problem \eqref{eqn:identification} does not necessarily enlarge its optimal solution set, which constitutes the feasible region of the outer problem. Therefore, similar to Example~\ref{example:bi-level}, \eqref{eqn:bi-level-SPC} is not a relaxation of \eqref{eqn:bi-level}.

\begin{figure}[t]
\setlength{\abovedisplayskip}{0pt}
\centering
{\includegraphics[width=0.45\textwidth]{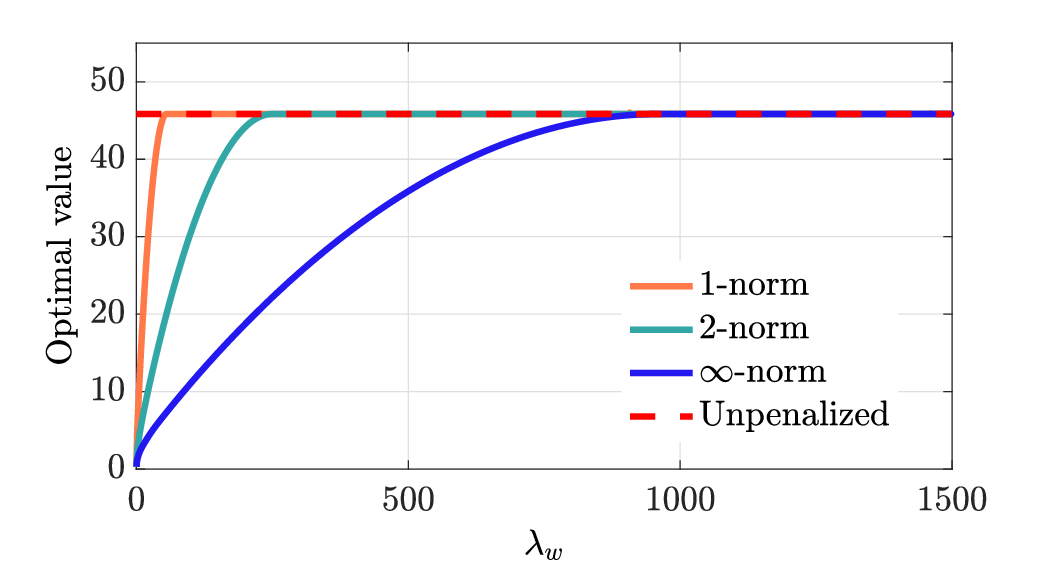}}
\vspace{-2mm}
\caption{Numerical illustration of Theorem \ref{them:x-exact-main}. The optimal value of \eqref{eqn:qp-pn-main} increases with larger weight $\lambda_w$ across different $p$-norms ($p=1,2,\infty$). In all cases, once $\lambda_w$ exceeds a threshold, the optimal value of \eqref{eqn:qp-pn-main} coincides with that of \eqref{eqn:qp-c-main}.
}
\label{fig:pen_p_norm}
\end{figure}

It is not difficult to derive the unique optimal solution 
$H^*_\s$ for the inner problem in \eqref{eqn:bi-level-SPC}, which can be obtained via Moore–Penrose inverse as  
${H}^*_\s = \col(U_\p, Y_\p, U_\f, M)$  
where $M = Y_\f \Pi_1$ with  
\vspace{-1mm}
\begin{equation} \label{eq:projection-matrix}
    \Pi_1 := H_1^\dag H_1, \qquad H_1 := \col(U_\p, Y_\p, U_\f). 
\vspace{-1mm}
\end{equation} 
Then, the bi-level problem \eqref{eqn:bi-level-SPC}  is equivalent to 
\vspace{-1mm}
\begin{equation}
\label{eqn:DD-SPC}
\begin{aligned}
\min_{\substack{g, \sigma_y \in \Gamma, \\ u \in \mathcal{U}, y \in \mathcal{Y}}}  \;\, & \|u\|_R^2 + \|y\|_Q^2 + \lambda_y \|\sigma_y\|_2^2  \\
\mathrm{subject~to} \;\, & 
{H}^*_\s g
= 
\col(
u_{\ini},
y_{\ini}+\sigma_y,
u,
y
).
\end{aligned}
\vspace{-1mm}
\end{equation}
Note that \eqref{eqn:DD-SPC} differs slightly from the classical SPC formulation, and we call it Equality-form \texttt{SPC}. In particular, the classical SPC does not involve the use of an identified trajectory library $H_\s^*$; instead, it employs an explicit multi-step predictor (also see Remark \ref{rem: diff-bi-form}). We provide further details in Appendix \ref{appendix:equal-spc-ddspc}. 

Following the discussion in Section \ref{subsection:regularization-on-g}, we here fix $H = H_{\mathrm{d}}$ and impose a constraint on $g$ to get an equivalent reformulation for~\eqref{eqn:DD-SPC}. In this case, we use a projection-based norm  $\|(I-\Pi_1)g\|_2$ as in \cite{dorfler2022bridging}, and consider 
\vspace{-1mm}
\begin{subequations}
\label{eqn:reg-proj}
\begin{align}
    \min_{\substack{g,\sigma_y \in \Gamma, \\ u \in \mathcal{U} ,y \in \mathcal{Y}}} \quad & \|y\|_Q^2 + \|u\|_R^2  + \lambda_y \|\sigma_y\|_2^2 \nonumber\\
    \mathrm{subject~to} \quad & H_{\mathrm{d}} g=\col(u_\ini, y_\ini+\sigma_y, u, y ), \label{eqn:reg-proj-1}  \\
    & \|(I-\Pi_1)g\|_2 = 0, \label{eqn:reg-proj-2}
\end{align}
\vspace{-5mm}
\end{subequations} \\
where the matrix $\Pi_1$ is defined in \eqref{eq:projection-matrix}. 

\begin{proposition}
\label{prop:spc-equival}
    Suppose $Q \succ 0, R \succ 0, \lambda_y > 0$ and that $\Gamma, \mathcal{U},$ and $\mathcal{Y}$ are convex sets. Fix any data matrix $H_\D$ in~\eqref{eqn:predictor-data-matrix}. Then, the bi-level problem \eqref{eqn:bi-level-SPC} has the same optimal value as the single-level problems \eqref{eqn:DD-SPC} and \eqref{eqn:reg-proj}. Moreover, all three problems admit the same unique optimal solution $u^*, y^*$ and $\sigma_y^*$. 
\end{proposition}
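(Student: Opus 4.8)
The plan is to establish the equivalence in two steps: first, that the bi-level problem \eqref{eqn:bi-level-SPC} and the single-level \method{Hankel-form SPC} \eqref{eqn:DD-SPC} coincide, and second, that \eqref{eqn:DD-SPC} and the projection-regularized problem \eqref{eqn:reg-proj} coincide. For the first step, I would verify the claimed closed-form solution of the inner identification problem: since the inner problem minimizes $\|\tilde H - H_{\mathrm d}\|_F^2$ subject only to the linearity constraint \eqref{eqn:id-rowsp} (that $\tilde Y_\f = K\, \col(U_\p,Y_\p,U_\f)$ for some $K$), the blocks $\tilde U_\p, \tilde Y_\p, \tilde U_\f$ are unconstrained and optimally equal to $U_\p, Y_\p, U_\f$, while $\tilde Y_\f$ ranges over $\{K H_1 : K\}= \row$-space-compatible matrices; the Frobenius-norm projection of $Y_\f$ onto this set is exactly $Y_\f \Pi_1$ with $\Pi_1 = H_1^\dag H_1$ the orthogonal projector onto $\row(H_1)$. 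Uniqueness of $H^*_\s$ follows because $\Pi_1$ is the unique such projector. Substituting $H^*_\s$ into \eqref{eqn:bi-level-pred} gives \eqref{eqn:DD-SPC} verbatim, so the two problems are literally the same optimization, hence share optimal value and optimal $(u,y,\sigma_y)$ (note $g$ need not be unique in either).

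For the second step, I would show the feasible sets of \eqref{eqn:DD-SPC} and \eqref{eqn:reg-proj}, projected onto the $(u,y,\sigma_y)$ coordinates, are identical. Fix any $v := \col(u_\ini, y_\ini+\sigma_y, u, y)$. The claim is: $v \in \mathrm{Im}\, H^*_\s$ if and only if there exists $g$ with $H_{\mathrm d} g = v$ and $(I-\Pi_1)g = 0$. For the ``if'' direction, if $(I-\Pi_1)g=0$ then $g \in \row(H_1)$, so $g = H_1^\dag H_1 g$, and then $Y_\f g = Y_\f \Pi_1 g = M g$ where $M$ is the last block of $H^*_\s$; since the first three blocks of $H_{\mathrm d}$ and $H^*_\s$ agree, $H_{\mathrm d} g = H^*_\s g = v$, so $v \in \mathrm{Im}\, H^*_\s$. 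For the ``only if'' direction, if $v = H^*_\s \tilde g$ for some $\tilde g$, replace $\tilde g$ by $g := \Pi_1 \tilde g$; then $(I-\Pi_1)g = 0$ trivially, and because $U_\p, Y_\p, U_\f$ have rows in $\row(H_1)$ (indeed $H_1 = \col(U_\p,Y_\p,U_\f)$ so $H_1 \Pi_1 = H_1$) and $M = Y_\f \Pi_1$ satisfies $M \Pi_1 = M$, we get $H^*_\s g = H^*_\s \tilde g = v$, and $H_{\mathrm d} g = H^*_\s g = v$ since the bottom block satisfies $Y_\f g = Y_\f \Pi_1 \tilde g = M\tilde g$. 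Thus \eqref{eqn:DD-SPC} and the constrained problem \eqref{eqn:reg-proj} with the hard constraint \eqref{eqn:reg-proj-2} have the same feasible $(u,y,\sigma_y)$ and the same objective, hence the same optimal value and optimal solution. (Here \eqref{eqn:reg-proj} is already written with the equality constraint $\|(I-\Pi_1)g\|_2 = 0$, so no penalty argument is needed for this particular proposition; Theorem~\ref{them:x-exact-main} is what later converts this hard constraint into a finite-weight regularizer.)

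Finally, I would address uniqueness of $u^*, y^*, \sigma_y^*$. Since $Q, R \succ 0$ and $\lambda_y > 0$, the objective $\|y\|_Q^2 + \|u\|_R^2 + \lambda_y\|\sigma_y\|_2^2$ is strictly convex in $(u, y, \sigma_y)$; the feasible set (in these variables, after eliminating $g$) is the convex set $\{(u,y,\sigma_y) : \col(u_\ini, y_\ini+\sigma_y, u, y) \in \mathrm{Im}\, H^*_\s,\ \sigma_y \in \Gamma,\ u \in \mathcal U,\ y \in \mathcal Y\}$, nonempty by assumption (or the statement is vacuous). A strictly convex function on a nonempty convex set has at most one minimizer, giving uniqueness of $(u^*, y^*, \sigma_y^*)$; this common minimizer is shared by all three problems by the two equivalences above.

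\textbf{Main obstacle.} The routine-looking but genuinely careful part is the ``only if'' direction of the feasible-set equivalence: one must argue that an arbitrary $\tilde g$ achieving $H^*_\s \tilde g = v$ can be replaced by its projection $\Pi_1 \tilde g$ without changing $H^*_\s \tilde g$, and then that this projected vector is feasible for \eqref{eqn:reg-proj} with $H_{\mathrm d}$ (not $H^*_\s$) as the data matrix — i.e., that $H_{\mathrm d}$ and $H^*_\s$ act identically on $\row(H_1)$. This hinges on the two identities $H_1 \Pi_1 = H_1$ and $Y_\f \Pi_1 = M$, both immediate from the definition $\Pi_1 = H_1^\dag H_1$, but they are the crux and should be stated explicitly. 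Everything else (closed form of the inner solution, strict convexity for uniqueness) is standard.
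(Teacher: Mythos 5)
Your proposal is correct and follows essentially the same route as the paper's proof in Appendix B.2: equivalence of \eqref{eqn:bi-level-SPC} and \eqref{eqn:DD-SPC} by substituting the closed-form inner solution $H^*_\s$, equivalence of \eqref{eqn:DD-SPC} and \eqref{eqn:reg-proj} by matching the feasible $(u,y,\sigma_y)$ sets via the identities $H_1\Pi_1=H_1$ and $Y_\f\Pi_1=M$ (your replacement $g:=\Pi_1\tilde g$ is exactly the paper's $\tilde g_1=H_1^\dag\col(u_\ini,y_\ini+\sigma_y,u)$), and uniqueness from strict convexity of the objective in $(u,y,\sigma_y)$. The only difference is that you explicitly verify the closed form of the inner minimizer, which the paper asserts without proof in the main text.
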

The proof follows a similar idea in \cite{shang2024convex}, and we postpone it to Appendix \ref{appendix:spc-equival}. The key idea is to show that feasible regions for decision variables $u, y, \sigma_y$ are the same for \eqref{eqn:bi-level-SPC}, \eqref{eqn:DD-SPC}, and \eqref{eqn:reg-proj}. Note that the data matrix $H_\D$ in Proposition \ref{prop:spc-equival} can be arbitrary and may come from a nonlinear system. 

It is now clear that \eqref{eqn:reg-proj} is in the form of \eqref{eqn:qp-c-main} (we give a detailed construction in Appendix \ref{appendix:exact-penalty}). Thus, applying Theorem \ref{them:x-exact-main} leads to the following result. 
\begin{corollary}
\label{them:DeeP-exact}
    Suppose $Q \succ 0, R \succ 0, \lambda_y > 0$ and $\Gamma, \mathcal{U},$ and $\mathcal{Y}$ are convex sets. Consider the regularized problem 
        \begin{equation}
\label{eqn:p-DeePC-p}
\begin{aligned}
\min_{\substack{g, \sigma_y \in \Gamma, \\ u \in \mathcal{U}, y \in \mathcal{Y}}}  \quad &  \|u\|_R^2 + \|y\|_Q^2 + \lambda_y \|\sigma_y\|_2^2 \\
& + \lambda_g \|(I-\Pi_1)g\|_2   \\
\mathrm{subject~to} \quad & H_\D g =\col(u_\ini, y_\ini+\sigma_y, u, y).
\end{aligned}
\end{equation}
For any $\lambda_g \geq 0$, it is a convex relaxation of \eqref{eqn:reg-proj}. If the optimal solution $(\sigma_y^*, u^*, y^*)$ to \eqref{eqn:reg-proj} is an interior point of $\Gamma \times \mathcal{U} \times \mathcal{Y}$, then there exists $\lambda_g^* >0$ such that \eqref{eqn:reg-proj} and \eqref{eqn:p-DeePC-p} are equivalent for all $\lambda_g > \lambda_g^*$. In other words, they have the same optimal cost value and optimal solution. 
\end{corollary}
We call \eqref{eqn:p-DeePC-p} Linearity-based \method{DDPC} (\method{L-DDPC}). Combining Corollary~\ref{them:DeeP-exact} with Proposition~\ref{prop:spc-equival} leads to the result below.
\begin{theorem}
\label{cor:relax-spc}
Consider the bi-level \method{DDPC} \eqref{eqn:bi-level-SPC} and the single-level \method{L-DDPC} \eqref{eqn:p-DeePC-p}. Suppose $Q \succ 0, R \succ 0, \lambda_y > 0$ and $\Gamma, \mathcal{U}, \mathcal{Y}$ are convex sets. Then, for any nonnegative $\lambda_g\geq0$, \eqref{eqn:p-DeePC-p} is a convex relaxation of \eqref{eqn:bi-level-SPC}, that is:
\begin{enumerate}
    \item Problem \eqref{eqn:p-DeePC-p} is convex; 
    \item Any feasible $\sigma_y, u, y$ for \eqref{eqn:bi-level-SPC} is feasible to \eqref{eqn:p-DeePC-p}; 
    \item The optimal value of \eqref{eqn:p-DeePC-p} is smaller than or equal to that of \eqref{eqn:bi-level-SPC}.
\end{enumerate}
\end{theorem}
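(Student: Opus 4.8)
The plan is to assemble Theorem~\ref{cor:relax-spc} as a direct consequence of the two results already established: Proposition~\ref{prop:spc-equival}, which identifies the bi-level problem \eqref{eqn:bi-level-SPC} with the single-level formulation \eqref{eqn:reg-proj}, and Corollary~\ref{them:DeeP-exact}, which shows that \eqref{eqn:p-DeePC-p} is an exact penalty reformulation of \eqref{eqn:reg-proj} for large enough $\lambda_g$. The three claimed properties (convexity, feasibility inclusion, lower-bound on the optimal value) then need to be checked one at a time, and only the third requires a small amount of care because the stated penalty weight is an arbitrary $\lambda_g \ge 0$, not necessarily above the threshold $\lambda_g^*$ from Corollary~\ref{them:DeeP-exact}.

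First I would verify claim (1): problem \eqref{eqn:p-DeePC-p} has a convex quadratic objective (since $Q, R \succ 0$ and $\lambda_y > 0$, and $\|(I-\Pi_1)g\|_2$ is a composition of a norm with a linear map, hence convex), an affine equality constraint $H_\D g = \col(u_\ini, y_\ini + \sigma_y, u, y)$, and membership constraints in the convex sets $\Gamma, \mathcal{U}, \mathcal{Y}$; therefore it is a convex program. Next, for claim (2), I would take any $\sigma_y, u, y$ feasible for the bi-level problem \eqref{eqn:bi-level-SPC}. By Proposition~\ref{prop:spc-equival}, the feasible sets (projected onto the variables $u, y, \sigma_y$) of \eqref{eqn:bi-level-SPC} and \eqref{eqn:reg-proj} coincide, so there is a $g$ with $H_\D g = \col(u_\ini, y_\ini + \sigma_y, u, y)$ and $(I-\Pi_1)g = 0$; this same triple $(g, \sigma_y, u, y)$ satisfies the equality constraint of \eqref{eqn:p-DeePC-p} (the penalty term simply vanishes), so it is feasible for \eqref{eqn:p-DeePC-p}. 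In particular the feasible region of \eqref{eqn:p-DeePC-p} contains that of \eqref{eqn:reg-proj}, and hence of \eqref{eqn:bi-level-SPC}.

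For claim (3), the argument splits. If $\lambda_g \ge \lambda_g^*$, Corollary~\ref{them:DeeP-exact} gives exact equality of optimal values between \eqref{eqn:p-DeePC-p} and \eqref{eqn:reg-proj}, which by Proposition~\ref{prop:spc-equival} equals the optimal value of \eqref{eqn:bi-level-SPC}; so the inequality holds (with equality). If $0 \le \lambda_g < \lambda_g^*$, I would argue monotonically: a feasible point of \eqref{eqn:reg-proj} achieves the same objective value in \eqref{eqn:p-DeePC-p} (again the penalty is zero on that feasible set), and enlarging the feasible region while the objective on the old region is unchanged can only decrease the optimal value; hence $\mathrm{val}\eqref{eqn:p-DeePC-p} \le \mathrm{val}\eqref{eqn:reg-proj} = \mathrm{val}\eqref{eqn:bi-level-SPC}$. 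Either way the optimal value of \eqref{eqn:p-DeePC-p} is at most that of \eqref{eqn:bi-level-SPC}, which is precisely the convex-relaxation property. I expect the main (very mild) obstacle to be stating claim (3) cleanly for \emph{all} $\lambda_g \ge 0$ rather than only for $\lambda_g$ above the threshold — the resolution is simply that the penalty term is nonnegative and vanishes on the feasible set of \eqref{eqn:reg-proj}, so the relaxation bound is monotone in $\lambda_g$ and holds trivially at $\lambda_g = 0$, while exactness is the additional content supplied by Corollary~\ref{them:DeeP-exact} for large $\lambda_g$.
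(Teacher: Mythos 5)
Your proposal is correct and follows exactly the route the paper intends: combine Proposition~\ref{prop:spc-equival} (equality of \eqref{eqn:bi-level-SPC}, \eqref{eqn:DD-SPC}, and \eqref{eqn:reg-proj}) with Corollary~\ref{them:DeeP-exact}, and observe that for any $\lambda_g \ge 0$ the penalty term is nonnegative and vanishes on the feasible set of \eqref{eqn:reg-proj}, so dropping the constraint \eqref{eqn:reg-proj-2} can only lower the optimal value. Your explicit handling of the case $0 \le \lambda_g < \lambda_g^*$ is exactly the point the theorem is making (relaxation for all $\lambda_g \ge 0$, exactness only above the threshold), and it matches the paper's one-line argument.
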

Our Theorem \ref{cor:relax-spc} is closely related to \cite[Theorem 4.6]{dorfler2022bridging}, but there are two key differences. Conceptually, as illustrated in Example \ref{example:bi-level}, the \method{L-DDPC} \eqref{eqn:p-DeePC-p} is only a convex relaxation of \eqref{eqn:bi-level-SPC}, and not necessarily of the original bi-level problem \eqref{eqn:bi-level}. This subtle distinction was overlooked in \cite[Theorem 4.6]{dorfler2022bridging}. Technically, we establish that \eqref{eqn:p-DeePC-p} serves as a convex relaxation of \eqref{eqn:bi-level-SPC} for any nonnegative parameter $\lambda_g \geq 0$, rather than only for sufficiently small $\lambda_g$ as in \cite[Theorem 4.6]{dorfler2022bridging}.
The threshold $\lambda_g^*$ that makes \eqref{eqn:bi-level-SPC} and \eqref{eqn:p-DeePC-p} equivalent depends on both the Lipschitz constant of the cost function over a specific region and problem data (\emph{e.g.}, $H_\D$) rather than only on the Lipschitz constant as in \cite[Theorem 4.6]{dorfler2022bridging}. Moreover, our proof is elementary, relying on Theorem \ref{them:x-exact-main} and Proposition \ref{prop:spc-equival}, which are inspired by the notion of partial calmness. We avoid utilizing \cite[Proposition 2.4.3]{clarke1990optimization} as in \cite[Theorem 4.6]{dorfler2022bridging}, whose application in the \method{DDPC} setting may require additional adaptations. 

\subsection{Causality-based norm and a Causal SPC}
\label{subsec:causal-DeePC}

We next consider the following bi-level problem
\vspace{-2mm}
\begin{align}
    \min_{\substack{g,\sigma_y \in \Gamma, 
    \\ u \in \mathcal{U} ,y \in \mathcal{Y}}} \quad & \|y\|_Q^2 + \|u\|_R^2  + \lambda_y \|\sigma_y\|_2^2  \nonumber 
    \\
    \mathrm{subject~to} \quad & {H}^* g=\col(u_\ini, y_\ini+\sigma_y, u, y ),  \label{eqn:bi-level-SPC-causal} \\
    &\textrm{where} \;\;  {H}^* \in \argmin_{\tilde{H}, K} \;\; 
 \|\tilde{H} - H_{\mathrm{d}}\|_F^2 \nonumber  \\
   & \qquad \qquad \mathrm{subject~to} \;\;  \eqref{eqn:id-rowsp}, \eqref{eqn:id-causal}, \nonumber
\vspace{-2mm}
\end{align} 
which considers the SPC constraint \eqref{eqn:id-rowsp} and the causality constraint \eqref{eqn:id-causal} for the inner identification. If $H_\D$ has full row rank, the optimal solution $H^*_\sca$ for the inner problem in \eqref{eqn:bi-level-SPC-causal} is unique \cite[Lemma 2]{sader2023causality}, which can be obtained via its optimal solution $K^*_\sca$, derived as
\vspace{-1mm}
\[
K_\sca^* = \begin{bmatrix}
    L_{31} & L_{32}^*
\end{bmatrix} \begin{bmatrix}
    L_{11} & \mathbb{0} \\ L_{21} & L_{22}
\end{bmatrix}^{-1},
\vspace{-1mm}
\]
where $L_{ij}$ comes from the LQ factorization of $H_\D$, \emph{i.e.}, 
\vspace{-1mm}
\begin{equation*}
H_\D = \begin{bmatrix}
    L_{11} & \mathbb{0} & \mathbb{0} \\
    L_{21} & L_{22} & \mathbb{0} \\
    L_{31} & L_{32} & L_{33} 
\end{bmatrix}
\begin{bmatrix}
    Q_1 \\ Q_2 \\ Q_3
\end{bmatrix}
\vspace{-1mm}
\end{equation*}
and $L_{32}^*$ is the lower-block triangular part of $L_{32} \in \mathbb{R}^{pL \times mL}$. Then, the optimal $H_\sca^*$ for the inner problem in \eqref{eqn:bi-level-SPC-causal} can be written as
\vspace{-1mm}
\begin{equation}
\label{eqn:opt-causal}
H_\sca^* = \begin{bmatrix}
    \col(U_\p,\!  Y_\p) \\ U_\f \\ K^* \! \col(U_\p,\! Y_\p,\! U_\f)
    \end{bmatrix}= \begin{bmatrix}
        L_{11} & \mathbb{0} \\
        L_{21} & L_{22} \\
        L_{31} & L_{32}^*
    \end{bmatrix} 
    \begin{bmatrix}
        Q_1 \\ Q_2
    \end{bmatrix}.
    \vspace{-2mm}
\end{equation}
We can equivalently formulate \eqref{eqn:bi-level-SPC-causal} as
\vspace{-2mm}
\begin{equation}
\label{eqn:c-gamma-DeePC}
\begin{aligned}
\min_{\substack{g, \sigma_y \in \Gamma, \\ u \in \mathcal{U}, y \in \mathcal{Y}}}  \quad & \|u\|_R^2 + \|y\|_Q^2 + \lambda_y \|\sigma_y\|_2^2  \\
\mathrm{subject~to} \quad & 
H_\sca^*g =
\col(u_{\ini}, y_{\ini}+\sigma_y,u,y).  
\end{aligned}
\vspace{-2mm}
\end{equation}

Following the discussion in Section \ref{subsection:regularization-on-g}, we next fix $H$ to a new trajectory library and set a suitable constraint on $g$ to obtain an equivalent reformulation for \eqref{eqn:c-gamma-DeePC}. For this, we denote the difference between $L_{32}$ and its lower-block triangular part $L_{32}^*$ as $L_{32}' := L_{32} - L_{32}^*$ and the new trajectory library $\hat{H}$ is 
\vspace{-2mm}
\begin{equation} \label{eq:hat-H}
\hat{H} = \begin{bmatrix}
        L_{11} & \mathbb{0} & \mathbb{0} & \mathbb{0}\\
        L_{21} & L_{22} & \mathbb{0} & \mathbb{0}\\
        L_{31} & L_{32}^* & L_{33} & L_{32}'
\end{bmatrix}\! 
    \begin{bmatrix}
        Q_1 \\ Q_2 \\ Q_3 \\ Q^*
    \end{bmatrix}
    \vspace{-2mm}
\end{equation}
where $Q^*$ has orthonormal rows and $Q^* Q_i^\tr = \mathbb{0}$ for $i = 1,2,3$. We further consider the norm $\|Q_\cs g\|_2$ where $Q_\cs = \col(Q_3, Q^*)$. Then, we have
\vspace{-2mm}
\begin{subequations}
\label{eqn:reg-causal}
\begin{align}
    \min_{\substack{g,\sigma_y \in \Gamma, \\ u \in \mathcal{U} ,y \in \mathcal{Y}}} \quad & \|y\|_Q^2 + \|u\|_R^2  + \lambda_y \|\sigma_y\|_2^2 \nonumber\\
    \mathrm{subject~to} \quad & \hat{H} g=\col(u_\ini, y_\ini+\sigma_y, u, y ),  \label{eqn:reg-causal-1}\\
    & \|Q_\cs g\|_2 = 0 \label{eqn:reg-causal-2}.
\end{align}
\vspace{-6mm}
\end{subequations} 

Our next result shows that the optimal solutions of \eqref{eqn:bi-level-SPC-causal}, \eqref{eqn:c-gamma-DeePC} and \eqref{eqn:reg-causal} are all the same. 
\begin{proposition}
\label{them:c-spc-equival}
    Suppose $Q \succ 0, R \succ 0, \lambda_y > 0$ and $\Gamma, \mathcal{U}, \mathcal{Y}$ are convex sets. Fix any data matrix $H_\D$ in \eqref{eqn:predictor-data-matrix} with full row rank and sufficiently large column number. The bi-level problem \eqref{eqn:bi-level-SPC-causal} has the same optimal value as the single-level problems \eqref{eqn:c-gamma-DeePC} and \eqref{eqn:reg-causal}. Furthermore, all three problems have the same unique optimal solution $u^*, y^*$, and $\sigma_y^*$.
\end{proposition}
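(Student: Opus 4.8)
The plan is to follow the template of Proposition~\ref{prop:spc-equival}: first collapse the bi-level problem \eqref{eqn:bi-level-SPC-causal} to the single-level problem \eqref{eqn:c-gamma-DeePC}, then show that \eqref{eqn:c-gamma-DeePC} and \eqref{eqn:reg-causal} have the same feasible region once projected onto the decision variables $(u,y,\sigma_y)$, and finally invoke strict convexity of the common objective to conclude uniqueness. For the first part, note that since $H_\D$ has full row rank its LQ factorization has nonsingular lower-triangular diagonal blocks, so by \cite[Lemma 2]{sader2023causality} the inner problem in \eqref{eqn:bi-level-SPC-causal} has the \emph{unique} minimizer $H^*_\sca$ given in closed form by \eqref{eqn:opt-causal}. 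This minimizer is independent of the outer variables, so substituting $H^* = H^*_\sca$ turns \eqref{eqn:bi-level-SPC-causal} into exactly \eqref{eqn:c-gamma-DeePC}; the two problems are identical in $(g,\sigma_y,u,y)$ and thus share their optimal value and their optimal $(u,y,\sigma_y)$.

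The technical core is the equivalence of \eqref{eqn:c-gamma-DeePC} and \eqref{eqn:reg-causal}. Their objectives are the same function of $(u,y,\sigma_y)$ and involve no $g$, so it is enough to show that the two feasible sets project onto the same set of $(u,y,\sigma_y)$. The engine is the orthonormality bookkeeping built into \eqref{eq:hat-H}: the rows of $Q_1,Q_2,Q_3$ are orthonormal (LQ factorization) and $Q^*$ is chosen with orthonormal rows and $Q^*Q_i^\tr = \mathbb{0}$, so the constraint $\|Q_\cs g'\|_2 = 0$ in \eqref{eqn:reg-causal-2} is equivalent to $Q_3 g' = \mathbb{0}$ and $Q^* g' = \mathbb{0}$, which kills the two rightmost block columns of $\hat{H}$ and makes $\hat{H}g'$ reduce exactly to the block matrix appearing in \eqref{eqn:opt-causal} applied to $\col(Q_1 g', Q_2 g')$. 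Concretely: given $(g,\sigma_y,u,y)$ feasible for \eqref{eqn:c-gamma-DeePC}, the vector $g' := Q_1^\tr Q_1 g + Q_2^\tr Q_2 g$ satisfies $Q_3 g' = Q^* g' = \mathbb{0}$ and $\hat{H}g' = H^*_\sca g = \col(u_\ini, y_\ini + \sigma_y, u, y)$, so $(g',\sigma_y,u,y)$ is feasible for \eqref{eqn:reg-causal}; conversely, given $(g',\sigma_y,u,y)$ feasible for \eqref{eqn:reg-causal}, the vector $g := Q_1^\tr Q_1 g' + Q_2^\tr Q_2 g'$ satisfies $H^*_\sca g = \hat{H} g' = \col(u_\ini, y_\ini + \sigma_y, u, y)$, so $(g,\sigma_y,u,y)$ is feasible for \eqref{eqn:c-gamma-DeePC}. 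Hence the projected feasible regions are equal, and the two problems have the same optimal value and optimal $(u,y,\sigma_y)$. The ``sufficiently large column number'' hypothesis is used precisely here: it guarantees that $\mathbb{R}^{T-L+1}$ has enough room for a block $Q^*$ with orthonormal rows orthogonal to those of $Q_1,Q_2,Q_3$, so that $\hat{H}$ in \eqref{eq:hat-H} (and the regularizer $\|Q_\cs g\|_2$) is well defined.

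For uniqueness, with $Q\succ0$, $R\succ0$, $\lambda_y>0$ the objective $\|u\|_R^2 + \|y\|_Q^2 + \lambda_y\|\sigma_y\|_2^2$ is strictly convex in $(u,y,\sigma_y)$, and the common projected feasible set is the intersection of the convex sets $\Gamma,\mathcal{U},\mathcal{Y}$ with the affine constraint $\col(u_\ini, y_\ini + \sigma_y, u, y) \in \mathrm{Im}\,H^*_\sca$, hence convex. Strict convexity then gives at most one minimizer, while coercivity of the objective (again from $Q,R\succ0$, $\lambda_y>0$) gives existence over a nonempty closed feasible set; this unique point is the common $(u^*,y^*,\sigma_y^*)$ of all three problems.

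I expect Step~2 to be the main obstacle: making the orthogonality arithmetic around the auxiliary block $Q^*$ fully rigorous, and pinning down the exact width/genericity requirement on $H_\D$ under which \eqref{eq:hat-H} is valid, are the only places where care is really needed. Steps~1 and~3 are routine once the closed-form inner solution \eqref{eqn:opt-causal} and strict convexity are in hand.
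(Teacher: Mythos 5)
Your proof is correct and follows essentially the same route as the paper: collapse the bi-level problem to \eqref{eqn:c-gamma-DeePC} via the closed-form inner solution \eqref{eqn:opt-causal}, match the projected feasible sets of \eqref{eqn:c-gamma-DeePC} and \eqref{eqn:reg-causal}, and conclude uniqueness from strict convexity. Your construction $g' = Q_1^\tr Q_1 g + Q_2^\tr Q_2 g$ is exactly the paper's $\tilde{g}_1 = H_1^\dag\col(u_\ini, y_\ini+\sigma_y, u)$ written as the orthogonal projection onto the row space of $H_1 = \col(U_\p, Y_\p, U_\f)$, so the arguments coincide.
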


The idea is to show \eqref{eqn:bi-level-SPC-causal}, \eqref{eqn:c-gamma-DeePC} and \eqref{eqn:reg-causal} share the same feasible region; see Appendix \ref{appendix:c-spc-equival}.  It is clear that~\eqref{eqn:reg-causal} is in the form of \eqref{eqn:qp-c-main}. 
Thus, applying Theorem \ref{them:x-exact-main} leads to the following result.  
\begin{corollary}
\label{them:c-DeeP-exact}
    Suppose $Q \succ 0, R \succ 0, \lambda_y > 0$ and $\Gamma, \mathcal{U}$, and $\mathcal{Y}$ are convex sets. Consider the regularized problem
\vspace{-2mm}
\begin{equation}
\label{eqn:p-DeePC-causal}
\begin{aligned}
\min_{\substack{g, \sigma_y \in \Gamma, \\ u \in \mathcal{U}, y \in \mathcal{Y}}}  \quad &  \|u\|_R^2 + \|y\|_Q^2 + \lambda_y \|\sigma_y\|_2^2 \\
& + \lambda_g \|Q_\cs g\|_2   \\
\mathrm{subject~to} \quad & \hat{H}g =\col(u_\ini, y_\ini+\sigma_y, u, y),
\end{aligned}
\vspace{-2mm}
\end{equation}
with $\hat{H}$ given in \eqref{eq:hat-H}. For any $\lambda_g \geq 0$, it is a convex relaxation of \eqref{eqn:reg-causal}. Further, if the optimal solution $(\sigma_y^*, u^*, y^*)$ for \eqref{eqn:reg-causal} is an interior point of $\Gamma \times \mathcal{U} \times \mathcal{Y}$, then there exists $\lambda_g^* >0$, such that, problem \eqref{eqn:p-DeePC-causal} has the same optimal value and optimal solutions as \eqref{eqn:reg-causal} for all $\lambda_{g} > \lambda_g^*$.
\end{corollary}
We call \eqref{eqn:p-DeePC-causal} as Causality-based \method{DDPC} (\method{C-DDPC}). The new regularizer $\|Q_\cs g\|_2$ penalizes both the violation of the row space constraint (\emph{i.e.}, $Q_3g$) and the usage of noncasual information (\emph{i.e.}, $Q^*g$). 

Combining Proposition \ref{them:c-spc-equival} and Corollary \ref{them:c-DeeP-exact}, we confirm that \eqref{eqn:p-DeePC-causal} is a relaxation of \eqref{eqn:bi-level-SPC-causal} under mild conditions. 
\begin{theorem}
\label{cor:relax-causal}
Consider the bi-level \method{DDPC}  \eqref{eqn:bi-level-SPC-causal} and the single-level \method{C-DDPC}  \eqref{eqn:p-DeePC-causal}. Suppose $Q \succ 0, R \succ 0, \lambda_y > 0$ and $H_\D$ has full row rank with sufficiently large column number. Let $\Gamma, \mathcal{U}$ and $\mathcal{Y}$ be convex sets. For any nonnegative $\lambda_g  \ge 0$, \eqref{eqn:p-DeePC-causal} is a convex relaxation of~\eqref{eqn:bi-level-SPC-causal}, that is:
\begin{enumerate}
    \item \eqref{eqn:p-DeePC-causal} is a convex optimization problem; 
    \item any feasible $\sigma_y, u, y$ for \eqref{eqn:bi-level-SPC-causal} is feasible to \eqref{eqn:p-DeePC-causal}; 
    \item the optimal value of \eqref{eqn:p-DeePC-causal} is smaller than or equal to that of \eqref{eqn:bi-level-SPC-causal}.
\end{enumerate}
\end{theorem}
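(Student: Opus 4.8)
The plan is to obtain the claim by chaining the two results already in place: Proposition~\ref{them:c-spc-equival}, which shows that the causal bi-level \method{DDPC}~\eqref{eqn:bi-level-SPC-causal} and the single-level problems \eqref{eqn:c-gamma-DeePC} and \eqref{eqn:reg-causal} share the same optimal value and the same unique optimal $u^*,y^*,\sigma_y^*$, and Corollary~\ref{them:c-DeeP-exact} (which itself rests on Theorem~\ref{them:x-exact-main}), which shows that \eqref{eqn:p-DeePC-causal} is an exact penalty reformulation of \eqref{eqn:reg-causal} once $\lambda_g>\lambda_g^*$. I would then verify the three bullet points one at a time; none of them requires a new technical ingredient beyond these results.

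For item~(1), convexity is read directly off \eqref{eqn:p-DeePC-causal}: the objective is $\|u\|_R^2+\|y\|_Q^2+\lambda_y\|\sigma_y\|_2^2$, convex since $Q\succ0$, $R\succ0$, $\lambda_y>0$, plus $\lambda_g\|Q_\cs g\|_2$, a nonnegative multiple of a norm and hence convex; the equality constraint $\hat{H}g=\col(u_\ini, y_\ini+\sigma_y, u, y)$ is affine in $(g,\sigma_y,u,y)$; and $\Gamma,\mathcal{U},\mathcal{Y}$ are convex. Hence \eqref{eqn:p-DeePC-causal} is a convex program for every $\lambda_g\ge0$.

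For items~(2) and~(3) I would route the comparison through the equivalence chain rather than compare \eqref{eqn:p-DeePC-causal} directly with the bi-level problem, since \eqref{eqn:p-DeePC-causal} is posed over the modified library $\hat H$ of \eqref{eq:hat-H}, not over $H^*_\sca$ or $H_\D$. The key observation is that \eqref{eqn:p-DeePC-causal} is obtained from \eqref{eqn:reg-causal} by deleting the constraint $\|Q_\cs g\|_2=0$ and adding the nonnegative term $\lambda_g\|Q_\cs g\|_2$ to the cost. Consequently, any $(g,\sigma_y,u,y)$ feasible for \eqref{eqn:reg-causal} is feasible for \eqref{eqn:p-DeePC-causal}, and on such points $\lambda_g\|Q_\cs g\|_2=0$, so the objective value is unchanged. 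This gives at once: (i) the feasible $(\sigma_y,u,y)$-set of \eqref{eqn:p-DeePC-causal} contains that of \eqref{eqn:reg-causal}, and (ii) the optimal value of \eqref{eqn:p-DeePC-causal} is no larger than that of \eqref{eqn:reg-causal}. Invoking Proposition~\ref{them:c-spc-equival} — which equates the feasible $(\sigma_y,u,y)$-set and the optimal value of \eqref{eqn:reg-causal} with those of \eqref{eqn:bi-level-SPC-causal} — then delivers (2) and (3) for every $\lambda_g\ge0$, in particular for every positive $\lambda_g$.

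The only real subtlety — and the point I would state explicitly to avoid the pitfall flagged in Example~\ref{example:bi-level} and Remark~\ref{remark:comparison} — is that \eqref{eqn:p-DeePC-causal} is a relaxation of the \emph{causal} bi-level problem \eqref{eqn:bi-level-SPC-causal}, whose inner problem retains both the SPC constraint \eqref{eqn:id-rowsp} and the causality constraint \eqref{eqn:id-causal}; it is not claimed to relax the full bi-level \method{DDPC}~\eqref{eqn:bi-level}. A minor secondary point concerns the role of the hypotheses: full row rank of $H_\D$ with sufficiently many columns is what makes $H^*_\sca$ unique and licenses Proposition~\ref{them:c-spc-equival}, whereas the interior-point condition on $(\sigma_y^*,u^*,y^*)$ is what Corollary~\ref{them:c-DeeP-exact} uses for exactness at a finite $\lambda_g^*$; the one-directional bounds (2)–(3) themselves need neither, but both hold under the theorem's standing assumptions, so I would simply invoke them as given.
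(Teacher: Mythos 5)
Your proposal is correct and follows the same route as the paper, which establishes the theorem by combining Proposition~\ref{them:c-spc-equival} (equivalence of \eqref{eqn:bi-level-SPC-causal}, \eqref{eqn:c-gamma-DeePC} and \eqref{eqn:reg-causal}) with the observation that \eqref{eqn:p-DeePC-causal} drops the constraint $\|Q_\cs g\|_2=0$ from \eqref{eqn:reg-causal} and replaces it by a nonnegative penalty vanishing on the original feasible set. Your explicit remark that items (2)--(3) hold for every $\lambda_g\ge 0$ without invoking the exactness threshold of Corollary~\ref{them:c-DeeP-exact} is a correct and welcome clarification of the paper's one-line argument.
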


Theorems \ref{cor:relax-spc} and \ref{cor:relax-causal} show the direct \method{L-DDPC} and \method{C-DDPC} are convex relaxations of the corresponding bi-level \method{DDPC} formulations. Since hard identification constraints are replaced by regularization terms, these direct approaches admit flexibility in terms of soft identification. Corollaries \ref{them:DeeP-exact} and \ref{them:c-DeeP-exact} developed from Theorem~\ref{them:x-exact-main} provide theoretical conditions that characterize when the relaxed problems become equivalent to the original formulations.

\begin{remark}[Comparison with \cite{sader2023causality}]
    To our knowledge, the single-level \method{C-DDPC} in \eqref{eqn:p-DeePC-causal} is new, and its relationship with the bi-level  \method{DDPC} \eqref{eqn:bi-level-SPC-causal} has not been studied previously. The closest related work is \cite{sader2023causality}, which introduces a causality-based regularizer within the $\gamma$-\method{DDPC} framework. The decision variable is reformulated and the regularization is imposed on $\gamma$, thereby losing the explicit trajectory library interpretation. Instead, our method~\eqref{eqn:p-DeePC-causal} directly regularizes $g$, which retains the structure of the trajectory library. In particular, \method{C-DDPC} can then be naturally combined with the $\|g\|_1$ term, which implicitly performs trajectory selection, as we will discuss in Section \ref{subsection:low-rank}.
\end{remark}

\vspace{2mm}
\subsection{Low-rank approximation and $l_1$ norm} \label{subsection:low-rank}
We finally consider the following bi-level problem
\vspace{-2mm}
\begin{align}
    \min_{\substack{g,\sigma_y \in \Gamma, 
    \\ u \in \mathcal{U} ,y \in \mathcal{Y}}} \quad & \|y\|_Q^2 + \|u\|_R^2  + \lambda_y \|\sigma_y\|_2^2  \nonumber 
    \\
    \mathrm{subject~to} \quad & {H}^* g=\col(u_\ini, y_\ini+\sigma_y, u, y ),  \label{eqn:bi-level-lr} \\
    &\textrm{where} \;\;  {H}^* \in \argmin_{\tilde{H}, K} \;\; 
 \|\tilde{H} - H_{\mathrm{d}}\|_F^2 \nonumber  \\
   & \qquad \qquad \mathrm{subject~to} \;\;  \eqref{eqn:id-rank}. \nonumber
\end{align}
\vspace{-6mm}\\
This bi-level problem aims to approximate the data~$H_\D$ with a low-rank matrix in~\eqref{eqn:id-rank}. 
It is clear that~the~unique optimal solution $H_{\lr}^*$ of the inner problem in~\eqref{eqn:bi-level-lr}~is 
\vspace{-2mm}
$$
H_\lr^* = \sum_{i = 1}^{mL+n} \bar{\sigma}_i \bar{u}_i \bar{v}_i^\tr
\vspace{-2mm}
$$  
where $\bar{\sigma}_i, \bar{u}_i$ and $\bar{v}_i$ are obtained via the standard SVD decomposition $H_\D =  \sum_{i = 1}^{(m+p)L} \bar{\sigma}_i \bar{u}_i \bar{v}_i^\tr$. Problem  \eqref{eqn:bi-level-lr} can be equivalently formulated as 
\vspace{-2mm}
\begin{equation}
\label{eqn:lr}
\begin{aligned}
\min_{g, \sigma_y \in \Gamma, u \in \mathcal{U}, y \in \mathcal{Y}}  \;\, & \|y\|_Q^2+\|u\|_R^2 + \lambda_y \|\sigma_y\|_2^2  \\
\mathrm{subject~to} \;\, & 
{H}_{\lr}^* g
= 
\col(
u_{\ini},
y_{\ini}+\sigma_y,
u,
y
).
\end{aligned}
\vspace{-1mm}
\end{equation}

We now fix $H= H_\D$ and add a $l_1$-norm constraint  of $g$, leading to a single-level problem 
\vspace{-1mm}
\begin{equation}
\label{eqn:reg-l1}
\begin{aligned}
    \min_{g,\sigma_y \in \Gamma, u \in \mathcal{U} ,y \in \mathcal{Y}} \quad & \|y\|_Q^2 + \|u\|_R^2  + \lambda_y \|\sigma_y\|_2^2\\
    \mathrm{subject~to} \quad & H_\D g=\col(u_\ini, y_\ini+\sigma_y, u, y ),  \\
    & \|g\|_1 \le \alpha_\cs.
\end{aligned}
\end{equation}

We have the following result. 
\begin{proposition}
\label{prop:l1-equival}
    Fix any data matrix $H_\D$, and suppose $\alpha_\cs$ is sufficiently large. The optimal value of \eqref{eqn:reg-l1} is smaller than or equal to that of \eqref{eqn:lr}.
\end{proposition}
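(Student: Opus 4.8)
The plan is to exploit the fact that \eqref{eqn:reg-l1} and \eqref{eqn:lr} carry the \emph{same} objective $\|y\|_Q^2+\|u\|_R^2+\lambda_y\|\sigma_y\|_2^2$, which depends only on $(u,y,\sigma_y)$ and not on the coordinate vector $g$. Hence it suffices to produce, for $\alpha_\cs$ large enough, a single point that is feasible for \eqref{eqn:reg-l1} and whose objective value equals the optimal value of \eqref{eqn:lr}. First I would dispose of the trivial case where \eqref{eqn:lr} is infeasible (its optimal value is then $+\infty$). Otherwise, since $Q,R\succ0$ and $\lambda_y>0$ make the objective coercive in $(u,y,\sigma_y)$ over the (closed) feasible set, an optimal solution $(g^*,\sigma_y^*,u^*,y^*)$ of \eqref{eqn:lr} exists; set $b:=\col(u_\ini,\,y_\ini+\sigma_y^*,\,u^*,\,y^*)$, so that $\tilde H_\lr^* g^*=b$.

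The crucial structural fact I would then establish is the image inclusion $\mathrm{Im}\,\tilde H_\lr^*\subseteq\mathrm{Im}\,H_\D$. This is immediate from the SVD: writing $H_\D=\sum_{i=1}^{(m+p)L}\bar\sigma_i\bar u_i\bar v_i^\tr$, the columns of $\tilde H_\lr^*=\sum_{i=1}^{mL+n}\bar\sigma_i\bar u_i\bar v_i^\tr$ lie in $\mathrm{span}\{\bar u_i:\bar\sigma_i>0\}=\mathrm{Im}\,H_\D$ (and if $\mathrm{rank}\,H_\D<mL+n$ then $\tilde H_\lr^*=H_\D$ and the inclusion is an equality). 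Consequently $b=\tilde H_\lr^*g^*\in\mathrm{Im}\,H_\D$, so the linear system $H_\D\hat g=b$ has a nonempty affine solution set.

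Next I would pick, among all $\hat g$ solving $H_\D\hat g=b$, one of minimal $\ell_1$ norm — the minimum is attained because $\|\cdot\|_1$ is coercive on $\mathbb{R}^{\bar n}$ — and denote this value by $c<\infty$. Then ``$\alpha_\cs$ sufficiently large'' means precisely $\alpha_\cs\ge c$: for any such $\alpha_\cs$, the point $(\hat g,\sigma_y^*,u^*,y^*)$ is feasible for \eqref{eqn:reg-l1} and attains objective value equal to the optimal value of \eqref{eqn:lr}, so the optimal value of \eqref{eqn:reg-l1} is no larger. I would also remark that, since $Q,R\succ0$ and $\lambda_y>0$, the optimal $(u^*,y^*,\sigma_y^*)$ of \eqref{eqn:lr} is unique, whence $b$ and therefore $c$ are determined by the data $H_\D$ (and the fixed constants) alone, making the threshold unambiguous.

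The only genuine (and minor) obstacle I anticipate is making ``$\alpha_\cs$ sufficiently large'' rigorous, which reduces to two routine points: that the infimum in \eqref{eqn:lr} is attained, so that $b$ is a well-defined fixed vector (coercivity of the strictly convex objective over a closed feasible set), and that the minimal $\ell_1$-recovery norm $c$ is finite, which is exactly the content of $b\in\mathrm{Im}\,H_\D$. Everything else is the one-line SVD image-inclusion argument together with the observation that the cost ignores $g$; in particular, no exact-penalty machinery (Theorem~\ref{them:x-exact-main}) or iterative argument is needed here, unlike in the projection- and causality-based cases, which is also why only an inequality — rather than full equivalence of optimal values and solutions — is claimed.
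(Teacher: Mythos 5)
Your proposal is correct and follows essentially the same route as the paper, whose one-line argument is precisely that $\mathrm{range}(H^*_\lr)\subseteq\mathrm{range}(H_\D)$ makes the optimal solution of \eqref{eqn:lr} feasible for \eqref{eqn:reg-l1} once $\alpha_\cs$ is sufficiently large. You merely spell out the details the paper leaves implicit (attainment of the optimum, existence of a finite minimal-$\ell_1$ coordinate vector $\hat g$, and the resulting explicit threshold), which is a welcome but not substantively different elaboration.
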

\vspace{-2mm} 
\begin{proof}
Let $(g^*,\sigma_y^*,u^*,y^*)$ be an optimal solution of \eqref{eqn:lr}, and define 
$
b^* := 
 H_\lr^* g^*.
$ 
Since $\mathrm{Im}(H_\lr^*)\subseteq \mathrm{Im}(H_\D)$, there exists some $\hat g$ such that
$
H_\D \hat g = b^*.
$ 
Hence $(\hat g,\sigma_y^*,u^*,y^*)$ is feasible for \eqref{eqn:reg-l1}. Since the objective does not depend on $g$, it has the same objective value. Therefore, if $\alpha_\cs \ge \|\hat g\|_1$, then $(\hat g,\sigma_y^*,u^*,y^*)$ is feasible for \eqref{eqn:reg-l1}, and we thus complete the proof. 
\end{proof}

The constraint $\|g\|_1 \le \alpha_\cs$ can be moved to the objective function without changing the optimal solution of \eqref{eqn:reg-l1} (but not the optimal value). We have the following result.

\begin{proposition}
\label{them:l1-exact}
    Let $Q \succ 0, R \succ 0, \lambda_y > 0$, and $\Gamma \times \mathcal{U} \times \mathcal{Y}$ being convex. Suppose problem \eqref{eqn:reg-l1} satisfies Slater's condition. Consider the regularized problem
    \vspace{-2mm}
       \begin{equation}
\label{eqn:l1-exact}
\begin{aligned}
    \min_{g,\sigma_y \in \Gamma, u \in \mathcal{U} ,y \in \mathcal{Y}} \quad & \|y\|_Q^2 + \|u\|_R^2  + \lambda_y \|\sigma_y\|_2^2 \\
    & + \lambda_g \|g\|_1 \\
    \mathrm{subject~to} \quad & H_\D g=\col(u_\ini, y_\ini+\sigma_y, u, y).
\end{aligned}
\vspace{-2mm}
\end{equation}
There exists a penalty parameter $\lambda_g^* \ge 0$, such that any optimal solution $(\sigma_y^*, u^*, y^*, g^*)$ of \eqref{eqn:reg-l1} is also an optimal solution of \eqref{eqn:l1-exact} for $\lambda_g = \lambda_g^*$.

\vspace{-2mm}
\end{proposition}

    \begin{proof}
To simplify notation, we let
$
x:=\col(\sigma_y,u,y), 
\mathcal{C}:=\Gamma\times\mathcal U\times\mathcal Y,
$
and define
$
f(x):=\|y\|_Q^2+\|u\|_R^2+\lambda_y\|\sigma_y\|_2^2.
$ 
The equality constraint in \eqref{eqn:reg-l1} can be written as
$
H_\D g-Ax=b
$ 
for some matrix $A$ and vector $b$. 

Then we rewrite \eqref{eqn:reg-l1} as 
\vspace{-2mm}
\[ 
\min_{x\in \mathcal{C},g}\ f(x)
\quad \text{subject to}\quad
H_\D g-Ax=b,\quad \|g\|_1\le \alpha_\cs. \vspace{-2mm}
\]
Strong duality holds for \eqref{eqn:reg-l1} by Slater's condition. Let $(\lambda_g^*,\mu^*)$ be a dual optimal solution, with $\lambda_g^*\ge 0$.
Take any primal optimal solution $(x^*,g^*)$. By strong duality of \eqref{eqn:reg-l1}, we know that $(x^*,g^*)$ minimizes the Lagrangian \vspace{-2mm}
\[ 
f(x)+\lambda_g^*(\|g\|_1-\alpha_\cs)+(\mu^*)^\top(H_\D g-Ax-b). \vspace{-2mm}
\]
Dropping the constant term $-\lambda_g^*\alpha_\cs$, we conclude that $(x^*,g^*)$ also minimizes
$
f(x)+\lambda_g^*\|g\|_1+(\mu^*)^\top(H_\D g-Ax-b).
$ 
Since $(x^*,g^*)$ is feasible for \eqref{eqn:l1-exact}, it follows that $(x^*,g^*)$ is optimal for \eqref{eqn:l1-exact}. As $(x^*,g^*)$ is an arbitrary primal optimal solution of \eqref{eqn:reg-l1}, the result follows. 
\end{proof}

We note that the converse implication generally does not hold in Proposition \ref{them:l1-exact}: an optimal solution of 
\eqref{eqn:l1-exact} need not be feasible, and hence need not be optimal, for \eqref{eqn:reg-l1}. Propositions \ref{prop:l1-equival} and \ref{them:l1-exact}  imply that \eqref{eqn:l1-exact} is a relaxation of~\eqref{eqn:bi-level-lr} with respect to the function $\|y\|_Q^2+\|u\|_R^2 + \lambda_y \|\sigma_y\|_2^2$ under mild conditions. 
\begin{theorem}
\label{cor:relax-lr}

Consider the bi-level problem \eqref{eqn:bi-level-lr} and the single-level problem \eqref{eqn:l1-exact}. Suppose $Q \succ 0$, $R \succ 0$, $\lambda_y > 0$, and $\Gamma$, $\mathcal U$, and $\mathcal Y$ are convex sets. Let $\lambda_g^* \ge 0$ be the penalty parameter given by Proposition \ref{them:l1-exact}. Then, for any $0 \le \lambda_g \le \lambda_g^*$, problem \eqref{eqn:l1-exact} is a convex relaxation of \eqref{eqn:bi-level-lr} with respect to the function
$
f(\sigma_y,u,y):=\|y\|_Q^2+\|u\|_R^2+\lambda_y\|\sigma_y\|_2^2,
$
in the following sense: 
\begin{enumerate}
    \item \eqref{eqn:l1-exact} is a convex optimization problem;
    \item any feasible $(\sigma_y, u, y)$ for \eqref{eqn:bi-level-lr} is feasible to  \eqref{eqn:l1-exact};
    \item for any optimal solution $(g^* ,\sigma_y^* ,u^* ,y^* )$ of \eqref{eqn:l1-exact},
    $
    f(\sigma_y^* ,u^* ,y^* ) 
    $ is smaller than the optimal value~of~\eqref{eqn:bi-level-lr}.
\end{enumerate}
\end{theorem}
     A similar statement as Theorem \ref{cor:relax-lr} appears in \cite[Theorem 4.8]{dorfler2022bridging}. However, we emphasize that the relaxation is only true with respect to the function $\|y\|_Q^2 + \|u\|_R^2 + \lambda_y \|\sigma_y\|_2^2$ under an upper bound for the regularizing parameter $\lambda_g$, which is treated differently in \cite{dorfler2022bridging}. The reason is that while the optimal solution of \eqref{eqn:l1-exact} is the same as~\eqref{eqn:reg-l1}, their objective functions are different, which generally leads to different optimal values. 

\section{Hybrid preprocessing and regularization of data for predictive control}
\label{sec:SVD-Iter}
The \method{DDPC} variants in Section \ref{sec:methods} have modified the inner identification problem \eqref{eqn:identification}, instead of directly solving it. By simplifying the inner problem, we obtain single-level formulations that serve as convex relaxations for the modified bi-level \method{DDPC} \eqref{eqn:bi-level}; see Theorems \ref{cor:relax-spc}, \ref{cor:relax-causal}, and \ref{cor:relax-lr}.

In this section, we propose an iterative algorithm that approximates the solution of the original inner problem \eqref{eqn:identification} and leverages this approximate solution for the outer predictive control. In particular, given the offline input and output data \eqref{eqn:Hankel_partition}, we consider that the input data $H_u := \col(U_\p, U_\f)$ is accurate and contains no noise. Therefore, we focus on denoising the output trajectory, and problem \eqref{eqn:identification} becomes  \vspace{-3mm}
\begin{subequations} \label{eq:SVD-Dom}
 {  \begin{align} 
\min_{\tilde{H}_y, K} \quad & \|H_y - \tilde{H}_y\|_F \nonumber\\
\mathrm{subject~to} \quad & \textrm{rank}(\tilde{H}) = mL+n,  \label{eq:SVD-Dom-a}\\
& \tilde{Y}_\f = K \ \col(U_\p, \tilde{Y}_\p, U_\f), \label{eq:SVD-Dom-c}\\
& K = \begin{bmatrix}
    K_p & K_f
\end{bmatrix}, \ K_f \in \mathcal{L} \label{eq:SVD-Dom-d}, \\
& \tilde{H}_y \in \mathcal{H} \label{eq:SVD-Dom-b}, 
\end{align}}
\end{subequations} \vspace{-5mm} \\
where $\tilde{H}_y, \tilde{H}$ denote $\col(\tilde{Y}_\p, \tilde{Y}_\f)$, and $\col(U_\p, \tilde{Y}_\p, U_\f, \tilde{Y}_\f)$, respectively. Compared with \eqref{eqn:identification}, the decision variables in \eqref{eq:SVD-Dom} become $\tilde{H}_y$ and $K$, as we only denoise the output trajectory. Still, problem \eqref{eq:SVD-Dom} is difficult to solve due to the interplay between \eqref{eq:SVD-Dom-a} to \eqref{eq:SVD-Dom-d}. Without \eqref{eq:SVD-Dom-c}, \eqref{eq:SVD-Dom-d}, it is a structured low-rank approximation (SLRA) problem~\cite{markovsky2008structured,yin2021low}. We here adapt an iterative SLRA algorithm in~\cite{yin2021low} to get the approximation solution to \eqref{eq:SVD-Dom}. 

\subsection{Sequential optimization}
Our key idea is to adopt a sequential optimization strategy that addresses the constraints in \eqref{eq:SVD-Dom} one at a time. The detailed procedure is described below.

\textbf{Step 1: Low-rank approximation.} Since the input data $u_\D$ has no noise and satisfies the persistent excitation, we have $\textrm{rank}(H_u) = mL$. In contrast, the output data $y_\D$ may contain ``variance'' noise and ``bias'' error. Thus, the rank of the raw data $H$ (\emph{i.e.}, $\col(H_u, H_y)$) is larger than $mL+n$. We first consider the constraint \eqref{eq:SVD-Dom-a} that approximates $H$ with a low-rank matrix, which is  
\vspace{-1mm}
\begin{equation}
\label{eqn:low-rank-approx}
    \begin{aligned} 
\min_{\tilde{H}_y} \quad & \|H_y - \tilde{H}_y\|_F \\
\mathrm{subject~to} \quad & \textrm{rank}(\tilde{H}) = mL+n.
\end{aligned}
\vspace{-1mm}
\end{equation}
This problem is slightly different from the standard low-rank approximation, as we do not change the input data $U_\p$ and $U_\f$.  Still, we can get an analytical solution of~\eqref{eqn:low-rank-approx} via singular value decomposition.
\begin{proposition}
\label{prop:opt-low-rank}
    Consider problem \eqref{eqn:low-rank-approx}, where the input data $u_\D$ has no noise and satisfies the persistent excitation. Let $\Pi_2 = H_u^\dag H_u$ be the orthogonal projector onto the row space of $H_u$, and denote SVD for the component of $H_y$ in the null space of $H_u$ as $H_y (I - \Pi_2) = \sum_{i = 1}^{pL} \bar{\sigma}_i \bar{u}_i \bar{v}_i^\tr$.
    Then, the optimal solution $H_{y_1}$ to \eqref{eqn:low-rank-approx} is given by 
    \vspace{-2mm}
    \begin{equation}
    \label{eqn:opt-low-rank}
    H_{y_1} := H_y \Pi_2 + \sum_{i=1}^{n} \bar{\sigma}_i \bar{u}_i \bar{v}_i^\tr.
    \vspace{-2mm}
    \end{equation}
\end{proposition}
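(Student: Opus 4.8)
\textbf{Proof proposal for Proposition~\ref{prop:opt-low-rank}.}

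The plan is to reduce the constrained low-rank approximation \eqref{eqn:low-rank-approx} to a standard unconstrained low-rank approximation on a lower-dimensional object, namely the part of $H_y$ orthogonal to the (fixed, noise-free) input row space, and then invoke the Eckart–Young theorem. First I would set up the orthogonal decomposition $H_y = H_y \Pi_2 + H_y(I-\Pi_2)$, where $\Pi_2 = H_u^\dag H_u$ projects onto $\row(H_u)$ and $I-\Pi_2$ onto its orthogonal complement. Since $u_\D$ is persistently exciting and noise-free, $\row(H_u)$ is an $mL$-dimensional subspace of $\mathbb{R}^{T-L+1}$. The key structural observation is that $\tilde H = \col(H_u, \tilde H_y)$ always contains the $mL$ independent rows of $H_u$, so $\mathrm{rank}(\tilde H) = mL + n$ is equivalent to requiring exactly $n$ additional independent rows contributed by $\tilde H_y$ beyond $\row(H_u)$; more precisely, $\mathrm{rank}(\tilde H) = mL + \mathrm{rank}\big(\tilde H_y (I - \Pi_2)\big)$, so the constraint becomes $\mathrm{rank}\big(\tilde H_y (I-\Pi_2)\big) = n$.

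Next I would decompose the objective by Pythagoras: writing $\tilde H_y = \tilde H_y \Pi_2 + \tilde H_y(I-\Pi_2)$ and using that $\Pi_2$ and $I-\Pi_2$ are complementary orthogonal projectors (so their ranges are Frobenius-orthogonal), we get
\begin{equation*}
\|H_y - \tilde H_y\|_F^2 = \|(H_y - \tilde H_y)\Pi_2\|_F^2 + \|(H_y - \tilde H_y)(I-\Pi_2)\|_F^2.
\end{equation*}
The first term is minimized (to zero) by choosing $\tilde H_y \Pi_2 = H_y \Pi_2$, and this choice is unconstrained since the rank condition only constrains $\tilde H_y(I-\Pi_2)$. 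For the second term, set $Z := \tilde H_y(I-\Pi_2)$; then we must solve $\min_Z \|H_y(I-\Pi_2) - Z\|_F$ subject to $\mathrm{rank}(Z) = n$ (with $Z$ supported on $\row(I-\Pi_2)$, which is automatic since $H_y(I-\Pi_2)$ already lies there and the optimal $Z$ inherits this). By Eckart–Young, the optimizer is the truncated SVD $Z^\star = \sum_{i=1}^n \bar\sigma_i \bar u_i \bar v_i^\tr$, using the SVD $H_y(I-\Pi_2) = \sum_{i=1}^{pL}\bar\sigma_i\bar u_i\bar v_i^\tr$. Recombining, $H_{y_1} = H_y\Pi_2 + \sum_{i=1}^n \bar\sigma_i\bar u_i\bar v_i^\tr$, which is \eqref{eqn:opt-low-rank}. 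One should also check the Hankel-structure issue is irrelevant here: \eqref{eqn:low-rank-approx} as stated drops \eqref{eq:SVD-Dom-b}, so no structural constraint is active at this step.

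The main obstacle I anticipate is the rank bookkeeping in the first paragraph — justifying rigorously that $\mathrm{rank}(\tilde H) = mL + \mathrm{rank}(\tilde H_y(I-\Pi_2))$. This requires arguing that the rows of $H_u$ (which are fixed) and the component of $\tilde H_y$ lying in $\row(H_u)$ together span exactly $\row(H_u)$, while the component in $\row(H_u)^\perp$ contributes independent directions disjoint from $\row(H_u)$; a clean way is to perform row operations subtracting multiples of $H_u$'s rows from $\tilde H_y$'s rows, which does not change $\mathrm{rank}(\tilde H)$ and replaces the $\tilde H_y$ block by $\tilde H_y(I-\Pi_2)$, whose row space is orthogonal to that of $H_u$, so the two blocks' ranks add. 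A secondary subtlety is uniqueness: \eqref{eqn:opt-low-rank} is the unique optimizer only if the $n$-th and $(n+1)$-th singular values of $H_y(I-\Pi_2)$ are distinct (generic under noise); otherwise it is merely \emph{a} minimizer, and I would state it accordingly or note that generically strict separation holds. The Pythagoras splitting and the Eckart–Young invocation are routine once the rank reduction is in place.
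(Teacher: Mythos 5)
Your proposal is correct and follows essentially the same route as the paper's proof: orthogonal decomposition of $\tilde H_y$ along $\Pi_2$ and $I-\Pi_2$, Pythagorean splitting of the Frobenius objective, setting the row-space component to $H_y\Pi_2$, and Eckart--Young on the null-space component. You are in fact somewhat more explicit than the paper on the one nontrivial step (translating $\mathrm{rank}(\tilde H)=mL+n$ into $\mathrm{rank}(\tilde H_y(I-\Pi_2))=n$ via row operations) and on the uniqueness caveat when $\bar\sigma_n=\bar\sigma_{n+1}$, both of which the paper glosses over.
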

Since we have $\mathrm{rank}(H_u) = mL$, the key insight for Proposition \ref{prop:opt-low-rank} is to ensure the part of $H_y$ in the null space of $H_u$ has rank $n$. Specifically, we first divide $H_y$ into two parts that are $H_y \Pi_2$ in the row space of $H_u$ and $H_y(I-\Pi_2)$ in the null space of $H_u$. We then perform an SVD of $H_y(I-\Pi_2)$ to estimate its rank-$n$ approximation, and finally combine it with the component of $H_y$ in the row space of $H_u$ as in \eqref{eqn:opt-low-rank}. A detailed proof is provided in Appendix \ref{appendix:opt-low-rank}.

For notational simplicity, we denote the mapping from the data $H_y$ to the optimal solution $H_{y_1}$ of \eqref{eqn:low-rank-approx} as $\Pi_\Lo$.

\textbf{Step 2: Hankel structure:} We then project  $H_{y_1}$ to a Hankel matrix set via averaging skew-diagonal elements~\cite{yin2021low} and represent the projector and the resulting Hankel matrix as $\Pi_\h$ and $H_{y_2}$.

\textbf{Step 3: Causality guarantee:} We finally use the Hankel approximation $H_{y_2}$ to form the problem 
\vspace{-2mm}
\begin{equation}
\label{eqn:row-causal}
    \begin{aligned} 
\min_{\tilde{H}_y, K} \quad & \|H_{y_2} - \tilde{H}_y\|_F \\
\mathrm{subject~to} \quad & \tilde{Y}_\f = K \ \col(U_\p, Y_{\p_2}, U_\f), \\
& K = \begin{bmatrix}
    K_p & K_f
\end{bmatrix}, \ K_f \in \mathcal{L},
\end{aligned}
\vspace{-2mm}
\end{equation}
which tackles constraints \eqref{eq:SVD-Dom-c}, \eqref{eq:SVD-Dom-d} and $\col(Y_{\p_2}, Y_{\f_2})\! :=\! H_{y_2}$. Problem~\eqref{eqn:row-causal} also has an analytical solution as shown in \eqref{eqn:opt-causal}, and we denote the mapping from $H_{y_2}$ to the optimal solution $H_{y_3}$ as $\Pi_\C$. We note that the analytical solution of \eqref{eqn:row-causal} is derived based on the fact that $\col(U_\p, Y_{\p_2}, U_\f, Y_{\f_2})$ has full row rank, which is generally true after the Hankel-structure approximation. 

We repeat Steps $1\!-\!3$ iteratively as listed in Algorithm~\ref{alg:iter-SLRA}. The resulting matrix $H_y^*$ from Algorithm~\ref{alg:iter-SLRA} is partitioned as $\col(Y_\p^*, Y_\f^*)$, and we form a new Hankel matrix $H_{\Op}^* = \col(U_\p, Y_\p^*, U_\f, Y_\f^*)$. We call $H_{\Op}^*$ an \textit{approximated optimal linear representation},~as~it directly tackles the inner identification problem without relaxation.
\begin{algorithm}[t]
\caption{Iterative Construction for Approximately Optimal Trajectory Library}
    \label{alg:iter-SLRA}
  \begin{algorithmic}[1]
    \Require 
    $U_\p$,  $U_\f$, $Y_\p$, $Y_\f$, $\epsilon$
    \State $H_{y} \gets \col(Y_\p, Y_\f)$, $H_{y_3} \gets H_{y}$;
    \Repeat
    \State $H_{y_1} \gets \Pi_\Lo(H_{y_3})$ \quad (\texttt{Low-rank approx});
    \State $H_{y_2} \!\gets\!\! \Pi_\h (H_{y_1})$ \quad (\texttt{Hankel~proj});
    \State $H_{y_3} \!\gets\!\! \Pi_\C (H_{y_2})$ \quad (\texttt{Causality~proj});
    \Until{$\|H_{y_2} -H_{y_3}\|_F \le \epsilon \|H_{y_3}\|_F$}
    \Ensure $H_y^* = H_{y_3}$
\end{algorithmic}
\end{algorithm}

\subsection{Data-driven predictive control with approximated linear data-driven representation} 

We use the approximately optimal linear model $H_{\Op}^*$ from  Algorithm \ref{alg:iter-SLRA} as the predictor in data-driven predictive control, which leads to  
\begin{equation}
\label{eqn:DeePC-SVD-Iter}
\begin{aligned}
\min_{\substack{g, \sigma_y \in \Gamma, \\ u \in \mathcal{U}, y \in \mathcal{Y}}}  \quad & \|u\|_R^2 + \|y\|_Q^2  + \lambda_y \|\sigma_y\|_2^2 + \lambda_1 \|g\|_1\\
\mathrm{subject~to} \quad & 
H_{\Op}^* g \!=\! 
\col( u_{\ini}, y_{\ini}+\sigma_y, u, y),\\
\end{aligned}
\end{equation}
where the $l_1$-norm regularizer is used to implicitly select the trajectory in $H_{\Op}^*$. We call this formulation \eqref{eqn:DeePC-SVD-Iter} as Approximation-based \method{DDPC} (\method{A-DDPC}). We note that $H_{\Op}^*$ only needs to be computed once via the offline collected data, and the initial trajectories $u_\ini, y_\ini$ are updated online at each step.

The SLRA \cite{markovsky2008structured,yin2021low}, which utilizes SVD and projection onto the set of Hankel matrices iteratively, has been well-studied. Sufficient conditions for its convergence guarantee are discussed in~\cite{andersson2013alternating}. However, these results cannot be directly applied to Algorithm~\ref{alg:iter-SLRA} as we require extra separation steps to divide $H_y$ into the row and null spaces of $H_u$ and the causality projection. In numerical implementation, Algorithm \ref{alg:iter-SLRA} is applied one time offline and was observed to converge in all our experiments with various pre-collected trajectories. Establishing a formal theoretical proof and analyzing its convergence rate remain open, which we leave for future work.

\begin{remark}[LTI systems and beyond]
\label{rem:LTI-character}
  Constraints \eqref{eq:SVD-Dom-a}-\eqref{eq:SVD-Dom-d} are necessary for the Hankel matrix $H$ to come from an LTI system, but they are not sufficient. As discussed in \cite{dorfler2022bridging}, the parameter $K$ needs to satisfy additional structure requirements to ensure the existence of the corresponding matrix parameters $A, B, C, D$ in \eqref{eqn:LTI-sys}. For nonlinear systems, making the data matrix $H$ more structured, as in the LTI setting, may introduce a larger bias error. Our proposed \method{A-DDPC} in \eqref{eqn:DeePC-SVD-Iter} preserves the linear, low-rank structure, and causality without requiring the predictor $H_{\Op}^*$ to exactly correspond to an LTI system. This hybrid preprocessing and regularization imposes certain system structure but also gives flexibility to allow \eqref{eqn:DeePC-SVD-Iter} to select model complexity implicitly, thus improving empirical control performance. 
\end{remark}

\begin{remark}[Complexity of \method{DDPC} variants]
\method{L-DDPC} \eqref{eqn:p-DeePC-p}, \method{C-DDPC} \eqref{eqn:p-DeePC-causal}, and \method{A-DDPC} \eqref{eqn:DeePC-SVD-Iter} have the same~optimization form and thus share  similar online computational complexity. Their main difference lies in the construction of trajectory libraries and the choice of regularizers. Using the equality constraints, we can eliminate $u, y, \sigma_y$, so that the only decision variable is $g$, whose dimension is $T-T_{\ini}- N+1$. For standard LTI systems, the length of the pre-collected trajectory to ensure the persistent exciting of the input is $(m+1)(T_\ini +N+n)-1$, and we can choose $T_\ini = n$. In this case, $g$ has dimension $m(N+2n)+n$, which is $2mn +n$ larger than that in classical MPC and is thus of comparable order. For systems beyond LTI settings, a longer pre-collected trajectory is generally needed to capture richer system behavior. In our experiments, the computation time for systems beyond LTI settings is on the order of $10^{-2} \, \mathrm{s}$, which remains suitable for real-time implementation.
\end{remark}

\section{Numerical experiments}
\label{sec:results}
We compare the numerical performance\footnote{Our code is available at \url{https://github.com/soc-ucsd/Convex-Approximation-for-DeePC}.} for the \method{DDPC} variants in Sections \ref{sec:methods} and \ref{sec:SVD-Iter}, including: 1) \method{L-DDPC} \eqref{eqn:p-DeePC-p}, 2) \method{C-DDPC} \eqref{eqn:p-DeePC-causal} and 3) \method{A-DDPC}  \eqref{eqn:DeePC-SVD-Iter}. We also include results from the Equality-form \method{SPC} (\method{SPC}) in \eqref{eqn:DD-SPC} and the indirect \method{DDPC} using system identification for comparison. In particular, we used the N4SID \cite{van1994n4sid} for the system ID. We do not consider \eqref{eqn:l1-exact} individually, as an $l_1$-norm regularizer is added for all \method{DDPC} variants. 

We perform two sets of experiments in this section: 1) an LTI system with noisy measurement \cite{fiedler2021relationship} (``variance" error) and 2) another nonlinear system with noise-free measurement \cite{dorfler2022bridging} (``bias" error). An additional example of controlling a two-wheeled robot using \method{DDPC} variants is provided in Appendix \ref{appendix:two-wheeled-robot}. We see that the indirect method performs better on the non-deterministic LTI system under the ``variance" error, while the direct \method{DDPC} variants have superior performance on the nonlinear system under the ``bias" error. This finding is consistent with \cite{dorfler2022bridging}. Furthermore, our results show the superior performance of \method{A-DDPC} in both systems. For hyperparameter selection, we perform a grid search for $\lambda_g$ and $\lambda_1$ over a broad range for the non-deterministic LTI system: we use the realized cost as the evaluation metric and choose a region that yields relatively good performance. The selected parameters are further tuned in the nonlinear example to enhance numerical stability.

\subsection{Non-deterministic LTI system} 
\label{subsec:res-LTI}
\noindent \textbf{Experiment setup.}  We first consider an LTI system from~\cite{fiedler2021relationship}, which is a triple-mass-spring system with $n=8$ states, $m=2$ inputs (two stepper motors), and $p=3$ outputs (disc angles). We consider data collection with additive Gaussian measurement noises $\omega \sim \mathcal{N}(0, \sigma I)$. We utilize the data sets with various sizes and noise levels that are $T=400, 600, 800$ and $\sigma$ from $0.02$ to $0.1$ in increments of $0.02$. The prediction horizon and the initial sequence are chosen as $N = 40$ and $T_{\ini} = 4$, respectively. We choose $Q = I$, $R = 0.1I, \mathcal{U} = [-0.7, 0.7]$ and $\Gamma, \mathcal{Y} = \mathbb{R}^3$ unless otherwise specified.

\textbf{Numerical results.} 
We here compare the realized control cost for different \method{DDPC} variants with various sizes of data sets and noise levels. Since the performance of \method{DDPC} variants depends on the pre-collected data set, all realized control costs for different \method{DDPC} variants are averaged over 100 pre-collected data sets.

\begin{figure}[t]
\setlength{\abovedisplayskip}{0pt}
\centering
\subfigure[T = 400]{\includegraphics[width=0.46\textwidth]{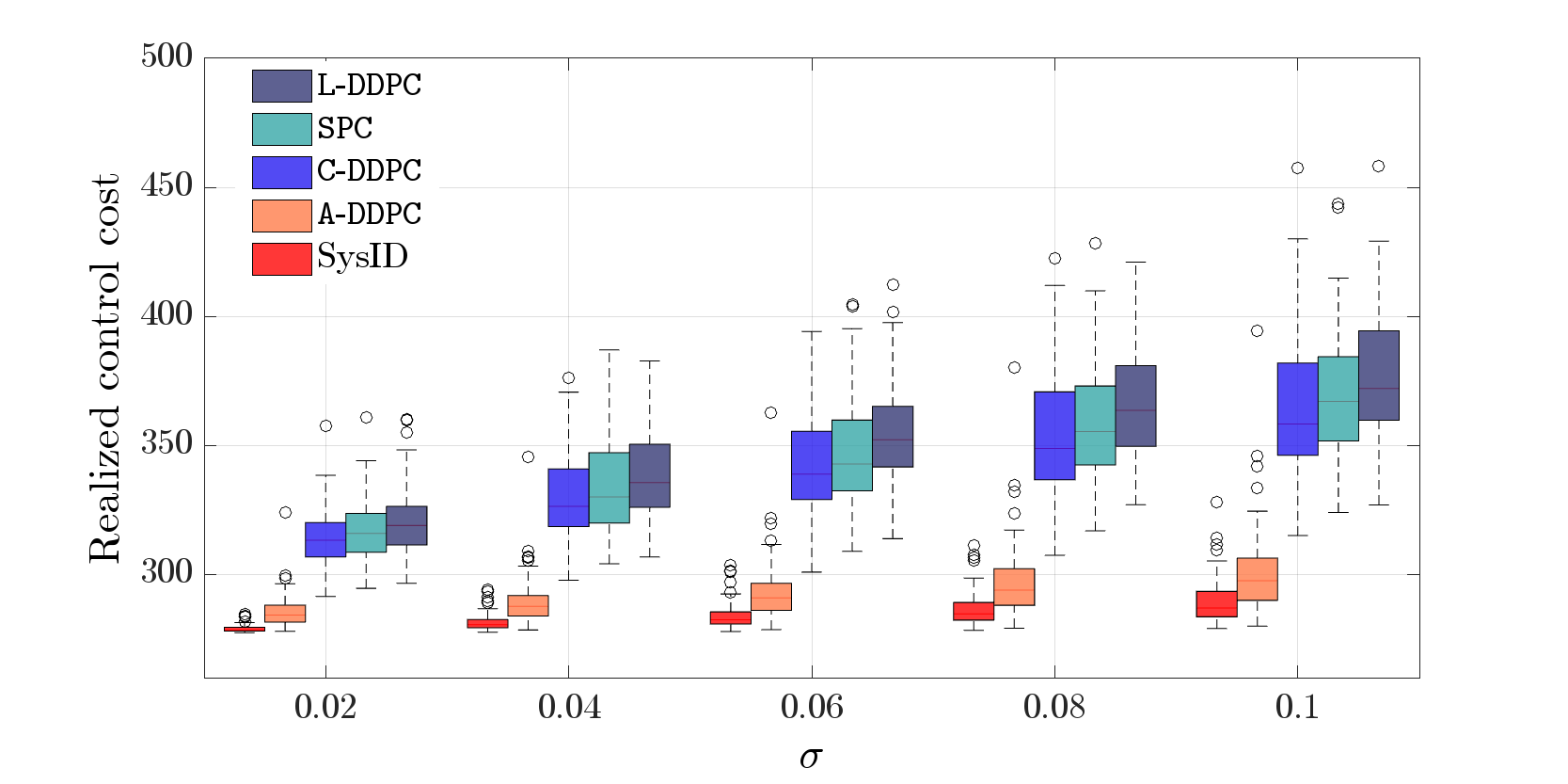} \label{fig:traj-hybrid}} \\
\vspace{-4mm}
\subfigure[T = 600]{\includegraphics[width=0.46\textwidth]{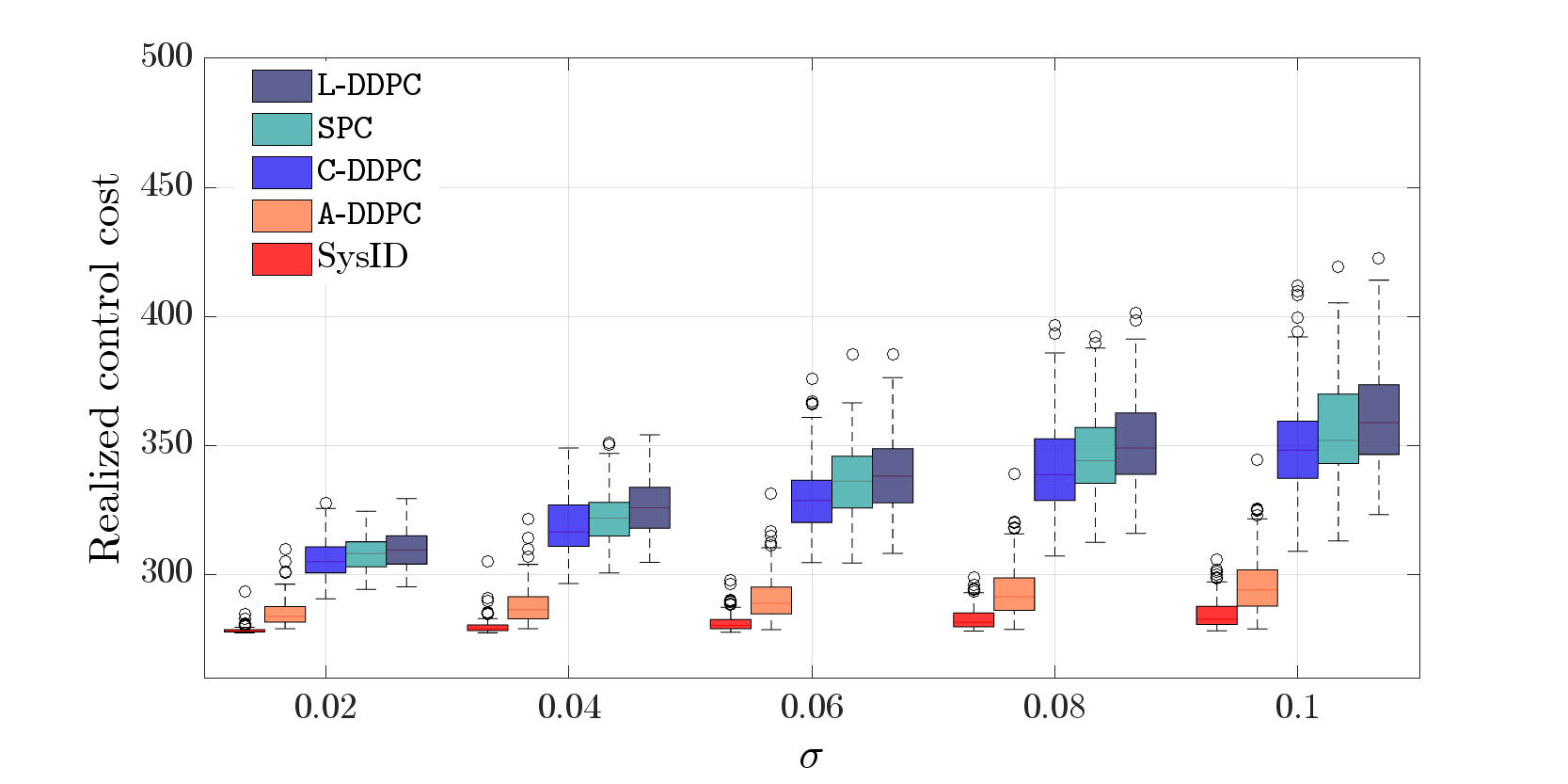}} 
\\
\vspace{-4mm}
\subfigure[T = 800]{\includegraphics[width=0.46\textwidth]{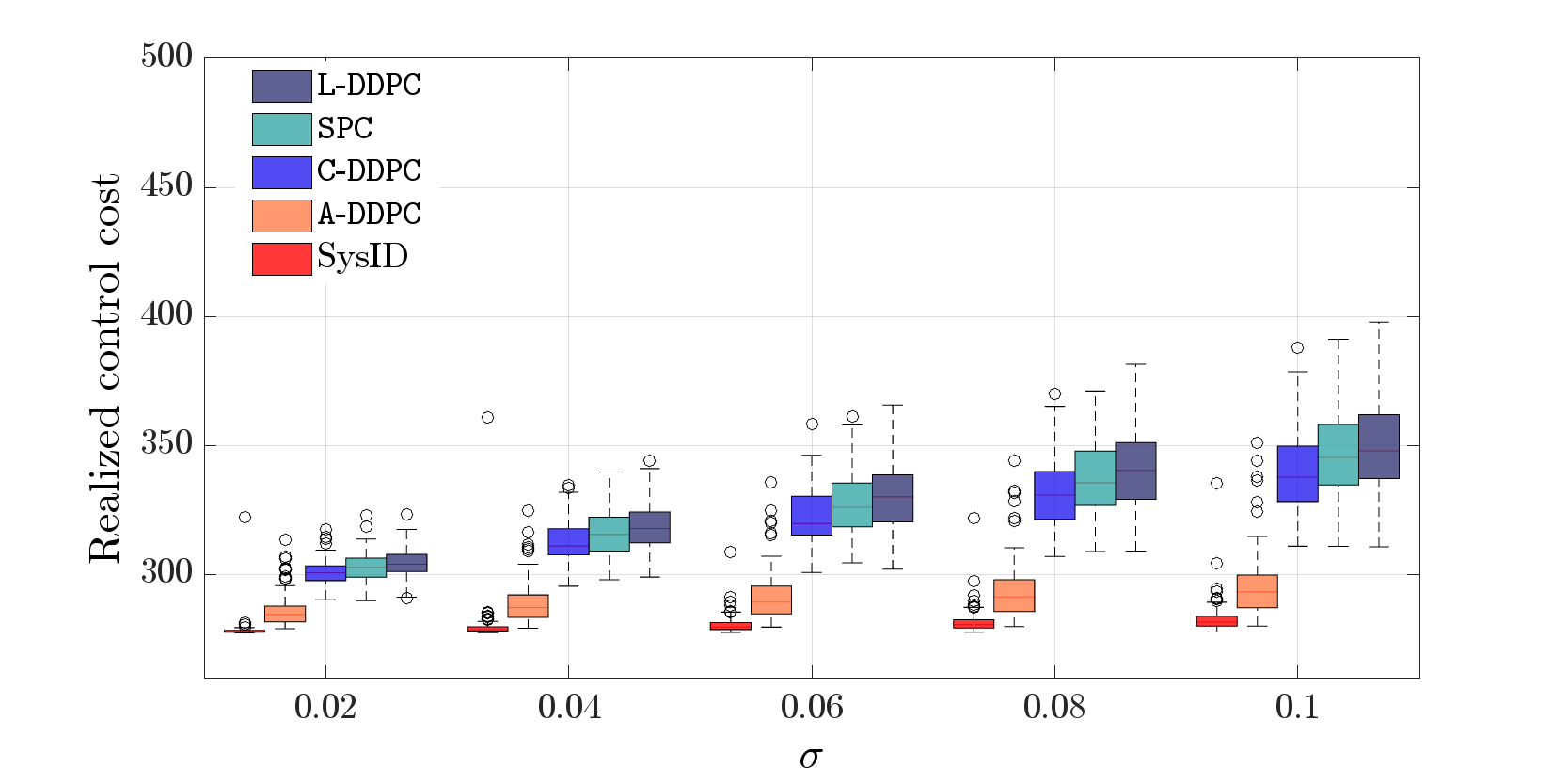}}\\
\vspace{-3mm}
\caption{Comparison of realized control cost of different variants of \method{DDPC} with varying measurement noises. (a), (b) and (c) illustrate the control performance of data-driven controllers with different pre-collected trajectory lengths. Across all experiments, the proposed \method{A-DDPC} \eqref{eqn:DeePC-SVD-Iter} is comparable to the system ID approach, which provides the best performance among all \method{DDPC} methods. 
}
\label{fig:traj-nond-LTI}

\vspace{-.5mm}
\end{figure}

The numerical results are shown in Fig.~\ref{fig:traj-nond-LTI}. As expected, the control performance of all data-driven controllers deteriorates with increasing noise levels. Among the \method{DDPC}-based methods, the proposed \method{A-DDPC} \eqref{eqn:DeePC-SVD-Iter} achieves the best realized control costs and exhibits the strongest noise robustness. This improvement may come from its iterative low-rank approximation, causality projection, and Hankel structure projection (see Algorithm~\ref{alg:iter-SLRA}), which together act as an effective noise filter. This reduces the influence of variance noises in LTI systems.

\begin{table}[h]
\caption{Realized Control Cost and with different pre-collected trajectory lengths with $\sigma = 0.1$; The ground truth value is $277.2487$, and the increase ratio is shown in parentheses.}
\centering
\begin{threeparttable}
\fontsize{8pt}{7pt}\selectfont
\begin{tabular}{c@{\hskip 5pt}c@{\hskip 5pt}c@{\hskip 5pt}c}
\toprule
& $T=400$ & $T = 600$ & $T=800$ \\
\midrule
\method{L-DDPC} & 377.51 ($36.16\%$)  & 361.36 ($30.34\%$) & 349.10 ($25.92\%$) \\
\method{SPC} & 368.42 ($32.88\%$) & 356.60 ($28.62\%$) & 346.56 ($25.00\%$) \\
\method{C-DDPC} & 364.51 ($ 31.48\%$) & 351.43 ($26.76\%$) & 340.39 ($22.77\%$)\\
\method{A-DDPC} & 300.77 ($\mathbf{8.48\%}$) & 296.59 ($\mathbf{6.98\%}$) & 296.31 ($\mathbf{6.88\%}$) \\
SysID & 289.25 ($4.33\%$)  & 284.64 ($2.67\%$) & 283.02 ($2.08\%$) \\
\bottomrule
\end{tabular}
\end{threeparttable}
\label{table:cost-time}
\end{table}

Table~\ref{table:cost-time} lists the realized control costs at noise level $\sigma = 0.1$ for different pre-collected trajectory lengths and control approaches. We also report their percentage increases relative to the ground-truth cost computed with noise-free data from \eqref{eqn:DeePC}. The results show a clear performance ordering: $\method{L-DDPC} > \method{SPC} > \method{C-DDPC} > \method{A-DDPC} > \textrm{System ID}$. For the LTI system with noisy data, the inner problem in \eqref{eqn:bi-level} enforces LTI structure in the data-driven representation, which in turn improves noise rejection in the outer predictive control problem \eqref{eqn:bi-level}. Consequently, methods that impose more constraints on the inner problem tend to yield more structured representations and better performance. Notably, the increase in realized control cost for \method{A-DDPC} is approximately $7\%$, significantly outperforming the other \method{DDPC} variants. 

Fig. \ref{fig:trajectories-LTI-noise} shows typical trajectories for all methods under different noise levels. In this case, the open-loop trajectories for all approaches remain close to the optimal trajectory obtained by MPC with the accurate system model up to $1\, \mathrm{s}$. After that, $\method{L-DDPC}, \method{SPC}$ and $\method{C-DDPC}$ exhibit larger oscillations and deviations from the optimal trajectory, whereas the proposed \method{A-DDPC} and the sequential system identification and control are closer to the optimal one with reduced oscillations. Furthermore, across different noise levels, \method{L-DDPC}, \method{SPC} and \method{C-DDPC} show relatively large variability while \method{A-DDPC} and the system identification approach are less sensitive to noise.
\begin{figure*}[t]
\setlength{\abovedisplayskip}{0pt}
\centering
\subfigure[\method{L-DDPC}]{\includegraphics[width=0.2\textwidth,trim={3mm 0mm 8mm 0},clip]{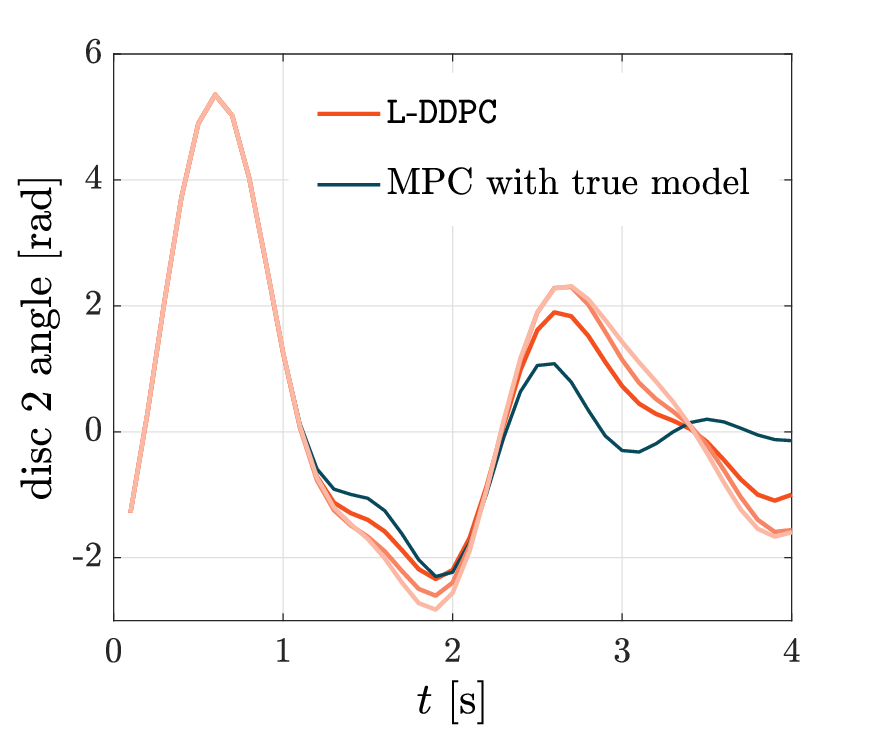}}
\hspace{-2mm}
\subfigure[\method{SPC}]{\includegraphics[width=0.2\textwidth,trim={3mm 0mm 8mm 0},clip]{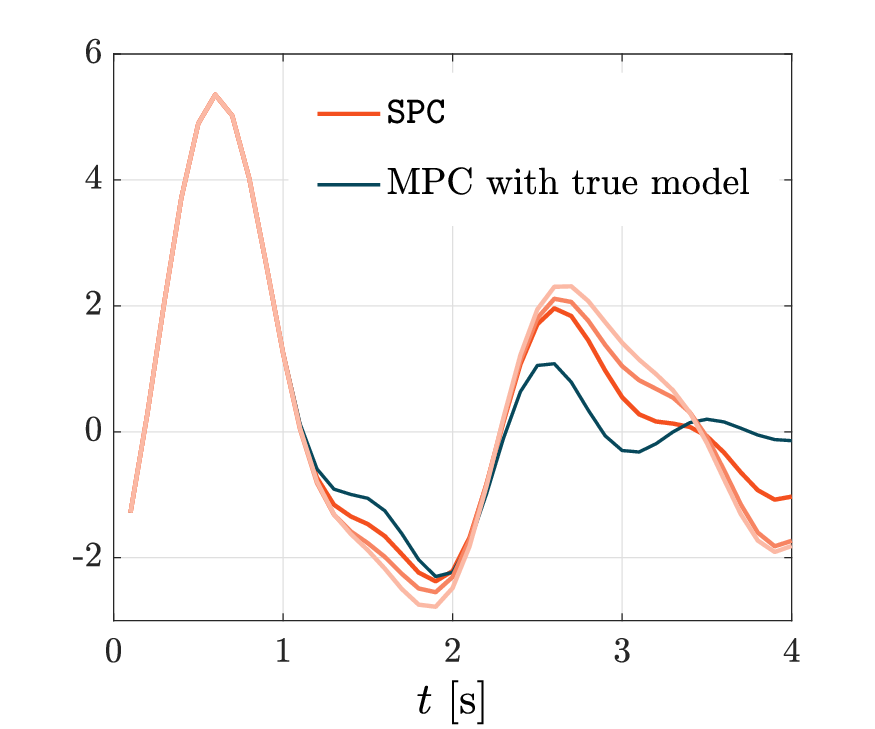}}
\hspace{-2mm}
\subfigure[\method{C-DDPC}]{\includegraphics[width=0.2\textwidth,trim={3mm 0mm 8mm 0},clip]{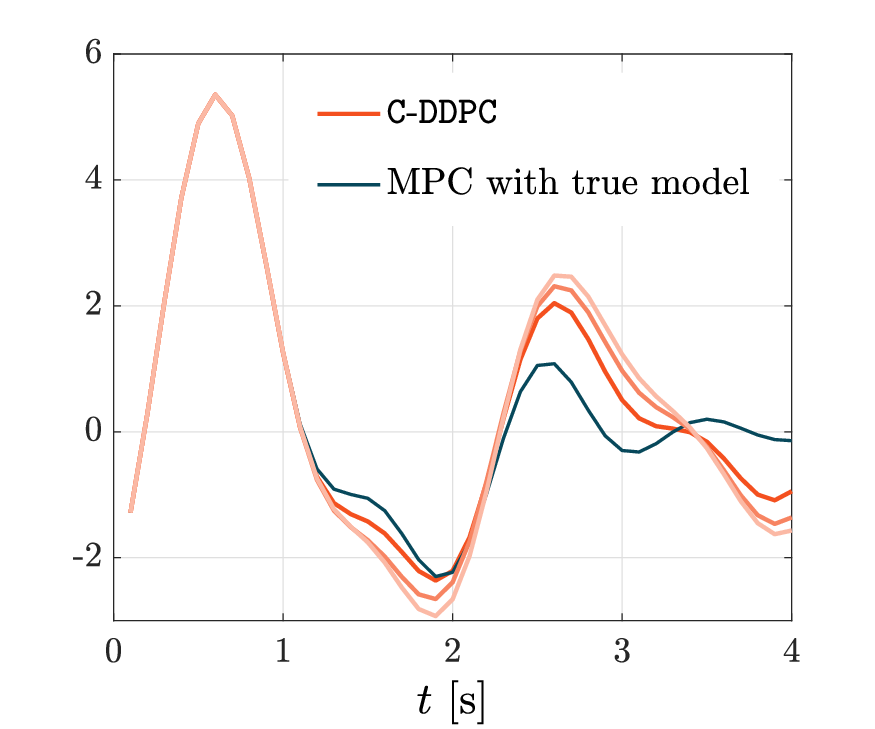}}
\hspace{-2mm}
\subfigure[\method{A-DDPC}]{\includegraphics[width=0.2\textwidth,trim={3mm 0mm 8mm 0},clip]{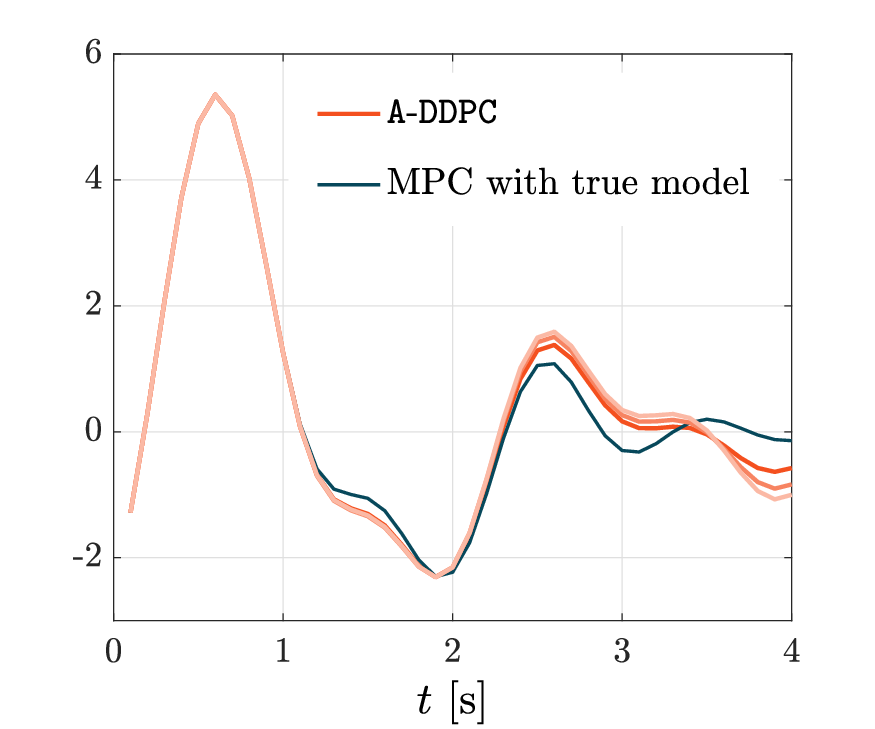}}
\hspace{-2mm}
\subfigure[System ID]{\includegraphics[width=0.2\textwidth,trim={3mm 0mm 8mm 0},clip]{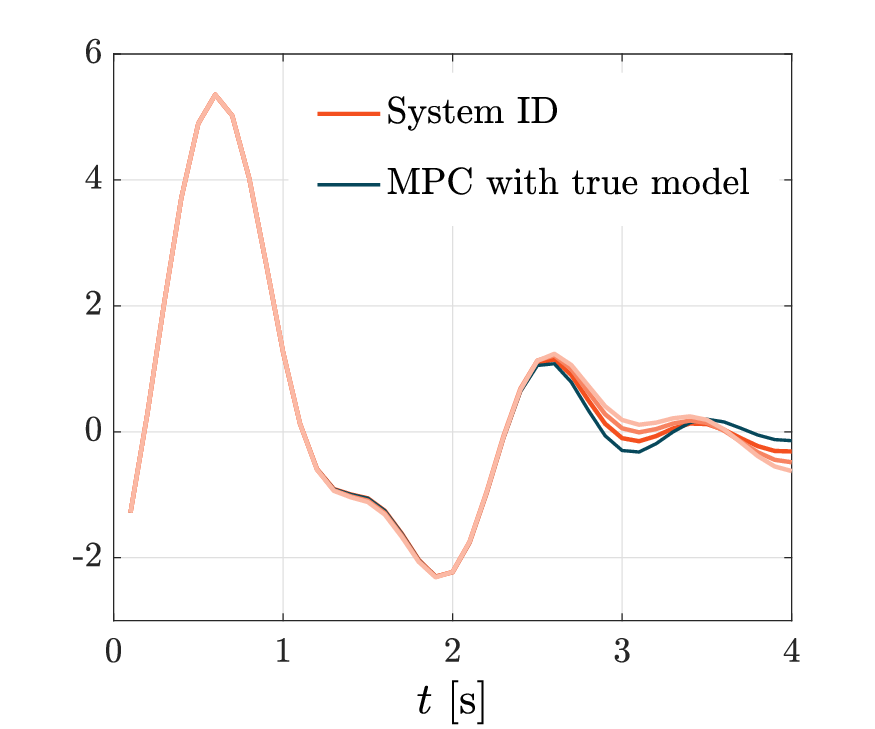}}
\caption{Output trajectories (the angle of disc 2) of different approaches. The blue trajectory denotes the optimal trajectory using MPC with the true system model. The orange trajectories from dark to light represent different \method{DDPC} variants under noise levels of $0.02$, $0.06$ and $0.1$, respectively.}
\label{fig:trajectories-LTI-noise}
\end{figure*}

\subsection{Nonlinear system} 
\label{subsec:numerical-results}
We here consider a nonlinear Lotka-Volterra system~\cite{dorfler2022bridging}
\vspace{-2mm}
\begin{equation}
\label{eqn:nonlinear-sys}
\dot{x} = 
    \begin{bmatrix}
        \dot{x}_1 \\
        \dot{x}_2
    \end{bmatrix} = \begin{bmatrix}
        a x_1 - b x_1 x_2 \\
        d x_1 x_2 - c x_2 + u
    \end{bmatrix},
    \vspace{-2mm}
\end{equation}
where $x_1, x_2$ denote prey and predator populations and $u$ is the control input. We use $a =c=0.5, b = 0.025, d=0.005$ and the discretization time step $\Delta t = 0.1$ in our experiments. We denote the discretized form of the nonlinear system \eqref{eqn:nonlinear-sys} and its linearization around the equilibrium $(\bar{u}, \bar{x}_1, \bar{x_2}) = (0, \frac{c}{d}, \frac{a}{b})$ in the error state~space as $\hat{x}(k+1) = f_{\text{nonlinear}}(\hat{x}(k), \hat{u}(k))$ and $\hat{x}(k+1) = f_{\text{linear}}(\hat{x}(k), \hat{u}(k))$, respectively.

We then construct systems with various nonlinearity, which gradually depart from the linear regime and the violation of the LTI assumption becomes more significant. We aim to evaluate how the performance of different \method{DDPC} approaches evolves as the level of model mismatch increases. Specifically, we interpolate between $f_{\textrm{nonlinear}}$ and $f_{\textrm{linear}}$ that is 
\vspace{-1mm}
\[
\begin{aligned}
\hat{x}(k+1) =  \ & \epsilon \cdot f_{\text{linear}}(\hat{x}(k), \hat{u}(k)) \\
& \quad + (1-\epsilon) \cdot f_{\text{nonlinear}}(\hat{x}(k), \hat{u}(k)),
\end{aligned}
\vspace{-1mm}
\]
and further collect the input-output trajectories of these systems with different $\epsilon$ ranging from $0$ to $1$ to formulate the associated trajectory libraries. The length of the pre-collected trajectory is $T=400$. The prediction horizon and initial sequence are set as $N = 60$ and $T_\ini = 4$. We choose $Q=I, R=0.5I$ and $\mathcal{U} = [-20, 20]$. 

\noindent \textbf{Comparison of direct/indirect methods.} We compare the realized control costs for the indirect system ID approach and different \method{DDPC} variants on systems with varying degrees of nonlinearity. Model orders are chosen to be 8 and 4 for \method{A-DDPC} in Algorithm \ref{alg:iter-SLRA} and N4SID, respectively, as they yield relatively satisfactory performance among various orders from testing. Similar to Section \ref{subsec:res-LTI}, we average the realized control costs over 100 pre-collected trajectories. We note that the identified model from N4SID is often ill-conditioned when the nonlinearity is high, which caused numerical issues in solving \eqref{eqn:predictive-control} in some of our experiments. We discard these ill-conditioned solutions when computing the average performance for the indirect system ID approach.

Both direct and indirect approaches perform well when the nonlinearity is low (\emph{i.e.}, $\epsilon \in [0.8, 1])$, as shown in Fig. \ref{fig:cost-nonlinear-All}. However, the cost for the indirect method significantly increases with higher nonlinearity, while the performance of direct methods remains relatively consistent. The superior performance of direct data-driven methods is consistent with experimental observations from~\cite{dorfler2022bridging}. The indirect system ID method projects the noisy data on a structured linear model parameterized by the matrices $(A, B,C, D)$ (see Remark \ref{rem:LTI-character}), which induces ``bias'' error due to selecting a wrong model class. On the other hand, the complexity of the LTI system is regularized but not specified in direct methods, such as \method{L-DDPC} and \method{C-DDPC}, where suitable choices of the weighting parameters provide more flexibility. For the \method{A-DDPC}, the model order $n$ in \eqref{eq:SVD-Dom-a} is tunable and can be chosen larger than the state dimension, and it does not require identification of an explicit parametric LTI model, which gives an improved capacity to capture the richer behavior of the nonlinear system. These features lead to superior performance of \method{L-DDPC}, \method{C-DDPC}, and \method{A-DDPC} for controlling nonlinear systems in our experiments.

\begin{figure}
\centering
\setlength{\abovedisplayskip}{0pt}
    \includegraphics[width=0.5\textwidth]{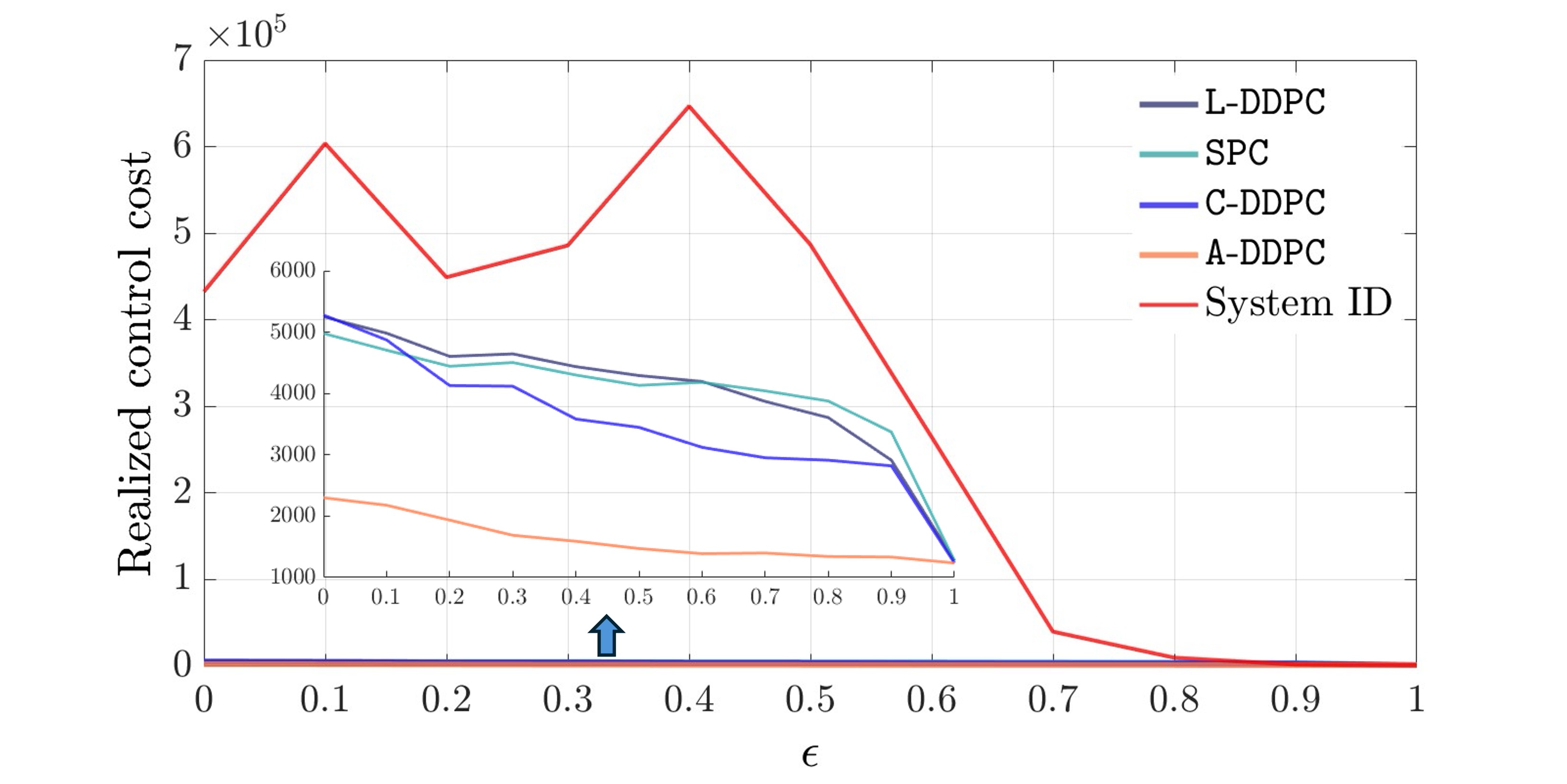} \\
    \vspace{-3.5mm}
    \caption{Comparison of realized cost for system ID and direct \method{DDPC} variants for the nonlinear system in Section \ref{subsec:numerical-results}. The System ID approach has the worst performance due to a wrong model class.}
    \label{fig:cost-nonlinear-All}
\end{figure}

\noindent \textbf{Comparison of \method{DDPC} variants.} We then compare the performance of different \method{DDPC} variants. The results are shown in Fig.~\ref{fig:cost-nonlinear}. It is obvious that realization costs for all approaches become higher with the increase of nonlinearity. The \method{C-DDPC} and \method{A-DDPC} with more structured data-driven representation outperform \method{L-DDPC} and \method{SPC} which only relax or tackle the linearity requirement without considering the causality and the Hankel structure. Furthermore, among all direct data-driven approaches, \method{A-DDPC} performs the best for both the nonlinear system (bias error) and the LTI system with measurement noise (variance noise). These numerical results suggest that we might obtain additional benefits when employing appropriate techniques from system ID to pre-process the trajectory library of nonlinear systems. Quantitatively analyzing the effect of nonlinearity is an interesting direction for future work, and the distance to the class of nonlinear systems that admit an accurate Koopman linear model \cite{shang2026existence} may serve as a useful metric.
\begin{figure}
    \centering
    \setlength{\abovedisplayskip}{0pt}
\includegraphics[width=0.5\textwidth]{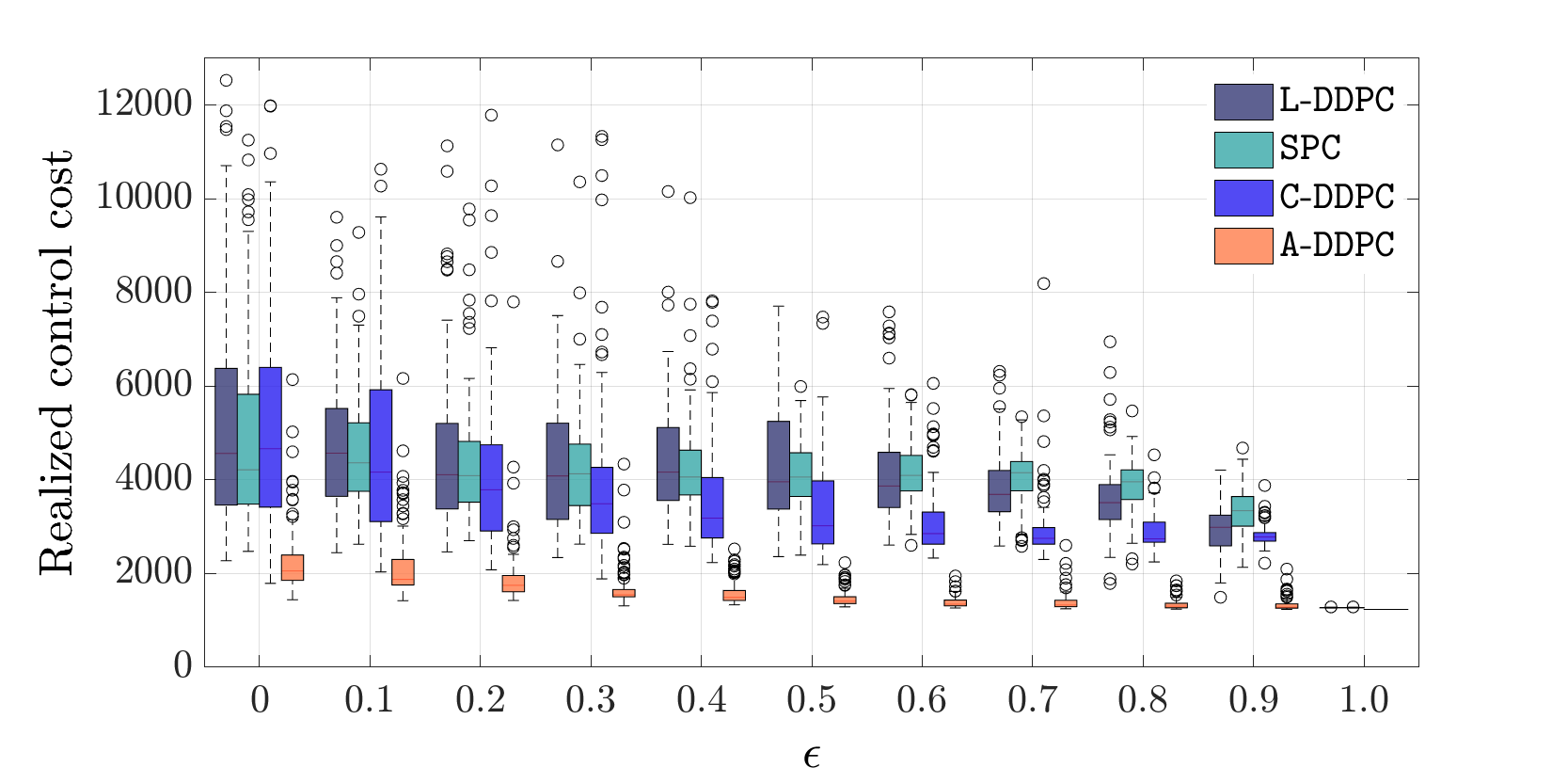}\\ 
\vspace{-3.5mm}
    \caption{ 
    Comparison of realized control cost of different \method{DDPC} variants for systems with varying nonlinearity. Across all experiments, the proposed \method{A-DDPC} \eqref{eqn:DeePC-SVD-Iter} provides the best control performance among all \method{DDPC} methods.}
    \label{fig:cost-nonlinear}
\end{figure}

\section{Conclusion}
\label{sec:conclusions}
In this paper, we have analyzed the role of regularizations in direct and indirect data-driven control via a bi-level optimization framework. We prove that, after dropping some inner constraints, the bi-level optimization problem can be transformed into a single-level convex optimization problem. Moreover, regularizers developed from those single-level problems are exact penalty functions under certain conditions. Thus, using regularizers is a further convex relaxation with respect to the corresponding bi-level problem (after dropping constraints). Moreover, we have proposed a new variant, called \method{A-DDPC}, which uses an iterative algorithm to obtain a new data-driven control approach. Numerical simulations have demonstrated the superior performance of \method{A-DDPC} \eqref{eqn:DeePC-SVD-Iter} with the more structured predictor. 

Interesting future directions include: 1) investigating the closed-loop performance of the \method{DDPC} schemes with various regularizers, 2) characterizing the relationship between the performance of different \method{DDPC} variants and the degree of system nonlinearity, and 3) developing online trajectory library updated algorithms to ensure control performance for nonlinear systems, while preserving real-time efficiency. Another interesting direction for future work is to systematically decide the model order in \eqref{eq:SVD-Dom-a} for the proposed \method{A-DDPC} and validate it through real-world experiments, following settings similar to those in \cite{elokda2021data, fawcett2022distributed, wang2023implementation, Shang2025Decentralized, lian2023adaptive, huang2021decentralized}.

{\small
\bibliographystyle{unsrt}
\bibliography{reference.bib}
}

\appendix

\section{Exact penalty for quadratic optimization} 
\label{appendix:exact-penalty-proof}
\subsection{A technical lemma} 
\label{appendix:penalty-proof}
We here introduce Hoffman’s error bound for a system of linear equalities. 
\begin{theorem}[Hoffman's error bound \cite{hoffman1952approximate}]
    \label{them:Hoffman}
        Let the system of linear equalities $Ax = \bar{b}$ with $x \in \mathbb{R}^{n_x}$ be consistent and $\bar{\mathcal{F}}$ be its feasible solution set. Fix any $p \in [1, \infty]$. Then, there exists a constant $\bar{L}$, such that for any $x \in \mathbb{R}^n$ there exists a point  $x^* \in \bar{\mathcal{F}}$ satisfying  
        $
        \|x -x^*\|_p \le \bar{L} \|Ax-\bar{b}\|_p.
        $
\end{theorem}
This theorem shows that if a vector does not satisfy the linear equality constraints, then its distance to the feasible set can be bounded by the size of the constraint violation. We now state a key technical lemma that will be used in the proof of Theorem~\ref{them:x-exact-main}. 
\begin{lemma}[Linear perturbation]
\label{lemma:perturbation}
Let $A_1 \in \mathbb{R}^{m \times n_1}$, $A_2 \in \mathbb{R}^{m \times n_2}$, $D \in \mathbb{R}^{s \times n_2}$,
and $b \in \mathbb{R}^m$. Suppose the set
\[
\mathcal{F}=\{(x_1,x_2)\mid A_1 x_1 + A_2 x_2 = b,\ \ Dx_2  = 0\}
\]
is non-empty. Fix any $p\in[1,\infty]$. Then there exists a finite constant $L>0$ such that for any pair
$(x_1,x_2)$ satisfying $A_1 x_1 + A_2 x_2 = b$, there exists a pair $(\tilde{x}_1,\tilde{x}_2)$ satisfying
$
(\tilde{x}_1,\tilde{x}_2) \in \mathcal{F}, \, \|x_1 - \tilde{x}_1\|_p \leq L \|Dx_2\|_p.
$
\end{lemma}
\begin{proof}
    We can construct $A:= \begin{bmatrix}
        A_1 & A_2 \\ 0 & D
    \end{bmatrix}, \bar{b} := \begin{bmatrix}
        b \\ 0
    \end{bmatrix}$ and then $\mathcal{F}$ is the set of feasible solutions for constraint $Ax = \bar{b}$. From Theorem~\ref{them:Hoffman}, there exists $L$ such that, for any $x := \col(x_1, x_2)$ there exists $\tilde{x} := (\tilde{x}_1, \tilde{x}_2) \in \mathcal{F}$ with
    \[
    \begin{aligned}
    \|x_1 - \tilde{x}_1\|_p &\le \|x-\tilde{x}\|_p \\
    &\le L \|\begin{bmatrix}
        A_1 x_1 + A_2 x_2 -b \\ Dx_2
    \end{bmatrix}\|_p = L \|Dx_2\|_p,
     \end{aligned}
    \]
    where the last equality comes from $A_1x_1 + A_2 x_2 = b$. This completes the proof.
\end{proof}

\subsection{Proof of Theorem~\ref{them:x-exact-main}}

Our proof below adapts the idea of partial calmness \cite{ye1997exact} into the quadratic optimization problems in Theorem~\ref{them:x-exact-main}. 

We let $f(x)\!:=\!x^\top Mx$ for simplicity. Since $\|\!\cdot \! \|_p$ is a norm, the constraint $\|Dx_2\|_p\!=\!0$ is equivalent to $Dx_2\!=\!0$.
It suffices to prove that there exists a finite $\lambda_w^*>0$ such that for every $\lambda_w>\lambda_w^*$,
every global minimizer $(x_1,x_2)$ of \eqref{eqn:qp-pn-main} must satisfy $Dx_2=0$.
Indeed, if this holds, then all minimizers of \eqref{eqn:qp-pn-main} are feasible for \eqref{eqn:qp-c-main}.
On the set $\{(x_1,x_2): Dx_2=0\}$, the penalty term vanishes, so both problems have the same
objective values and constraints. Therefore, the two problems have the same optimal value and the same set of optimal solutions.

\textbf{Step 1: Every minimizer $(x_1,x_2)$ of \eqref{eqn:qp-pn-main} must satisfy $Dx_2=0$}.  
We claim that there exist constants $\delta>0$ and $\lambda_w^*>0$ such that for any $\lambda_w>\lambda_w^*$,
the inequality
\begin{equation}\label{eq:calm-ineq-1}
f(x_1)+\lambda_w\|Dx_2\|_p
\ \ge\
f(x_1^*)+(\lambda_w-\lambda_w^*)\|Dx_2\|_p
\end{equation}
holds for all feasible $(x_1,x_2)$ of \eqref{eqn:qp-pn-main} satisfying
\begin{equation}\label{eq:perturb-x}
\|x_1-x_1^*\|_p \le \delta,\qquad \|Dx_2\|_p \le \delta,
\end{equation}
where $(x_1^*,x_2^*)$ is the given optimal solution of \eqref{eqn:qp-c-main}. We note that, the existence of such feasible solution is guaranteed as we can trivially choose $x_1 = x_1^*$ and $x_2 = x_2^*$.

Assuming \eqref{eq:calm-ineq-1} for now, fix any $\lambda_w>\lambda_w^*$.
Since $(x_1^*,x_2^*)$ is feasible for \eqref{eqn:qp-pn-main} and satisfies $Dx_2^*=0$, it attains
objective value $f(x_1^*)$ in \eqref{eqn:qp-pn-main}.
Moreover, \eqref{eq:calm-ineq-1} implies that any feasible point in the neighborhood \eqref{eq:perturb-x} has
objective value at least $f(x_1^*)$, with strict inequality whenever $Dx_2\neq 0$.
Hence $(x_1^*,x_2^*)$ is a local minimizer of \eqref{eqn:qp-pn-main}.
Since \eqref{eqn:qp-pn-main} is convex, every local minimizer is global, so $(x_1^*,x_2^*)$ is a global
minimizer of \eqref{eqn:qp-pn-main}.

Now let $(\bar x_1,\bar x_2)$ be any global minimizer of \eqref{eqn:qp-pn-main}. If $D\bar x_2\neq 0$, consider the
convex combination
\[
(x_{1,t},x_{2,t}) := (1-t)(x_1^*,x_2^*)+t(\bar x_1,\bar x_2),\qquad t\in(0,1),
\]
which is feasible to \eqref{eqn:qp-pn-main} for all $t\in[0,1]$. 
Since the objective is convex and both endpoints are global minimizers, each $(x_{1,t},x_{2,t})$ is also a global
minimizer. 
Choose $t>0$ small enough so that \eqref{eq:perturb-x} holds, \emph{i.e.},
$
t\|\bar x_1-x_1^*\|_p \le \delta,\, 
t\|D\bar x_2\|_p \le \delta.
$ 
Then $Dx_{2,t}=tD\bar x_2\neq 0$, and applying \eqref{eq:calm-ineq-1} gives
$
f(x_{1,t})+\lambda_w\|Dx_{2,t}\|_p
\ \ge\
f(x_1^*)+(\lambda_w-\lambda_w^*)\|Dx_{2,t}\|_p
\ >\ f(x_1^*),
$ 
contradicting that $(x_{1,t},x_{2,t})$ is a global minimizer.
Therefore, every global minimizer $(\bar x_1,\bar x_2)$ of \eqref{eqn:qp-pn-main} must satisfy $D\bar x_2=0$.
This completes the proof once we establish \eqref{eq:calm-ineq-1}.

\textbf{Step 2: Proof of the key inequality \eqref{eq:calm-ineq-1}.}
Since $x_1^*$ is an interior point of $\mathcal X$, there exists $r>0$ such that the $p$-norm ball
$\mathcal{B}_p(x_1^*;r)\subseteq \mathcal X$.
Because $f$ is quadratic and $\mathcal{B}_p(x_1^*;r)$ is bounded, $f$ is Lipschitz on this ball, \emph{i.e.}, there exists $\kappa_r > 0$ such that 
\begin{equation}\label{eq:Lip-local}
|f(u)-f(v)|\le \kappa_r\|u-v\|_p,\quad \forall u,v\in\mathcal{B}_p(x_1^*;r).
\end{equation}

Since $(x_1^*,x_2^*)$ is feasible for \eqref{eqn:qp-c-main}, the set
$\mathcal F:=\{(x_1,x_2):A_1x_1+A_2x_2=b,\ Dx_2=0\}$ is nonempty. Hence Lemma~\ref{lemma:perturbation} yields a
constant $L>0$ such that for any $(x_1,x_2)$ satisfying $A_1x_1+A_2x_2=b$, there exists
$(\tilde x_1,\tilde x_2)\in\mathcal F$ with
\begin{equation}\label{eq:perturbation-bound}
\|x_1-\tilde x_1\|_p \le L\|Dx_2\|_p.
\end{equation}
Now take any feasible $(x_1,x_2)$ of \eqref{eqn:qp-pn-main} satisfying \eqref{eq:perturb-x} and let
$(\tilde x_1,\tilde x_2)\in\mathcal F$ be the corresponding perturbed point from Lemma~\ref{lemma:perturbation}.
Then
\[
\|\tilde x_1-x_1^*\|_p
\le \|x_1-x_1^*\|_p + \|x_1-\tilde x_1\|_p
\le \delta + L\delta
= r,
\]
so $\tilde x_1\in\mathcal{B}_p(x_1^*;r)\subseteq \mathcal X$ via choosing $\delta := r/(1+L)$. That illustrates $(\tilde x_1,\tilde x_2)$ is feasible for
\eqref{eqn:qp-c-main}. By optimality of $(x_1^*,x_2^*)$ for~\eqref{eqn:qp-c-main},
\[
f(\tilde{x}_1) \ge f(x_1^*).
\]
Moreover, since $x_1\in\mathcal X$, $\tilde x_1\in\mathcal{B}_p(x_1^*;r)\subseteq\mathcal X$, and both are
in $\mathcal{B}_p(x_1^*;r)$ whenever \eqref{eq:perturb-x} holds (note that $\delta < r$ from construction), the local Lipschitz bound
\eqref{eq:Lip-local} gives
\[
f(x_1)
\ge f(\tilde x_1) - \kappa_r\|x_1-\tilde x_1\|_p
\ge f(x_1^*) - \kappa_r L\|Dx_2\|_p,
\]
where we used \eqref{eq:perturbation-bound} in the last step. Therefore, for any $\lambda_w\ge 0$,
\[
f(x_1)+\lambda_w\|Dx_2\|_p
\ge f(x_1^*) + (\lambda_w-\kappa_rL)\|Dx_2\|_p.
\]
This is exactly \eqref{eq:calm-ineq-1} with $\lambda_w^* := \kappa_rL$. This completes the proof.

\section{Technical proofs}
\subsection{Relation with the classcial \method{SPC}}
\label{appendix:equal-spc-ddspc}

The  classical SPC is of the following form 
\begin{equation}
\begin{aligned}
\min_{\sigma_y \in \Gamma, u \in \mathcal{U}, y \in \mathcal{Y}} \;\, &  \|y\|_Q^2 + \|u\|_R^2 + \lambda_y \|\sigma_y\|_2^2 \\
\mathrm{subject~to} \;\, & 
y
= Y_\f 
\begin{bmatrix}
    U_\p \\
    Y_p \\
    U_\f
\end{bmatrix}^\dag
\begin{bmatrix}
u_{\ini}\\
y_{\ini}+\sigma_y\\
u\\
\end{bmatrix}, \label{eqn:SPC-main}
\end{aligned}
\end{equation}
which does not have the variable $g$. We can establish the following equivalence. 

\begin{proposition}
\label{proposition:data-driven-SPC}
    If $Q\! \succ \! 0, R \! \succ \! 0$ and $H_1\!=\! \col(U_\p, Y_\p, U_\f)$ in \eqref{eq:projection-matrix} has full row rank, then \eqref{eqn:DD-SPC} and \eqref{eqn:SPC-main} have the same optimal solution $u^*, y^*$ and $\sigma_y^*$, $\forall \lambda_y >0$.
\end{proposition}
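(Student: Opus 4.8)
The plan is to eliminate the auxiliary variable $g$ from \eqref{eqn:DD-SPC} and show that the resulting feasible set in the variables $(u,y,\sigma_y)$ coincides exactly with the feasible set of \eqref{eqn:SPC-main}. Since both problems carry the identical objective $\|y\|_Q^2+\|u\|_R^2+\lambda_y\|\sigma_y\|_2^2$ and the identical side constraints $\sigma_y\in\Gamma$, $u\in\mathcal{U}$, $y\in\mathcal{Y}$, equality of the feasible sets immediately gives the same optimal value; and strict convexity of the objective (from $Q\succ0$, $R\succ0$, $\lambda_y>0$) together with convexity of the feasible set forces the optimal triple $(u^*,y^*,\sigma_y^*)$ to be unique and common to both problems.

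For the elimination step, recall $H^*_\s=\col(H_1,\,Y_\f\Pi_1)$ with $H_1=\col(U_\p,Y_\p,U_\f)$ and $\Pi_1=H_1^\dag H_1$, and write $b:=\col(u_\ini,\,y_\ini+\sigma_y,\,u)$. Because $H_1$ has full row rank, $H_1$ is surjective, $H_1^\dag=H_1^\tr(H_1H_1^\tr)^{-1}$, $H_1H_1^\dag=I$, and $\Pi_1$ is the orthogonal projector onto $\mathrm{range}(H_1^\tr)$, with $\mathrm{null}(H_1)=\mathrm{range}(H_1^\tr)^\perp$. The constraint $H^*_\s g=\col(b,y)$ is solvable in $g$ if and only if there is some $g$ with $H_1g=b$ and $Y_\f\Pi_1 g=y$. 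The first equation is always solvable by surjectivity; and for any of its solutions, $g-H_1^\dag b\in\mathrm{null}(H_1)$, hence $\Pi_1 g=\Pi_1 H_1^\dag b=H_1^\dag b$, so that $Y_\f\Pi_1 g=Y_\f H_1^\dag b$ independently of which solution $g$ is chosen. Therefore $(u,y,\sigma_y)$ is feasible for \eqref{eqn:DD-SPC} if and only if $y=Y_\f H_1^\dag b$, which is precisely the predictor constraint of \eqref{eqn:SPC-main}. Combining this with the preceding remark finishes the argument.

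The computation is essentially bookkeeping with the Moore--Penrose pseudoinverse, so the main thing to get right is the role of the full-row-rank hypothesis on $H_1$: it is used twice, once so that $H_1g=b$ is solvable for \emph{every} $b$ (otherwise \eqref{eqn:DD-SPC} would silently impose an extra consistency constraint on $(u_\ini,y_\ini+\sigma_y,u)$ that \eqref{eqn:SPC-main} does not have), and once so that $\Pi_1 g$ is a function of $b$ alone, making $Y_\f\Pi_1 g$ well defined. I would flag explicitly that the equivalence can fail when $H_1$ is rank-deficient. A minor point to verify carefully is the identity $\Pi_1 H_1^\dag b=H_1^\dag b$, which holds because $\mathrm{range}(H_1^\dag)=\mathrm{range}(H_1^\tr)$ is exactly the subspace fixed by $\Pi_1$.
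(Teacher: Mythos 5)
Your proposal is correct and follows essentially the same route as the paper: both reduce the claim to showing that, for full-row-rank $H_1$, the constraint of \eqref{eqn:DD-SPC} is solvable in $g$ exactly when $y=Y_\f H_1^\dag\col(u_\ini,y_\ini+\sigma_y,u)$, using $H_1H_1^\dag=I$ and the fact that $\Pi_1 g=H_1^\dag b$ for every solution of $H_1g=b$ (the paper phrases this via the decomposition $Y_\f=M+M^\bot$ and $g=H_1^\dag b+\hat g$), and then invoke strict convexity of the common objective for uniqueness. Your single ``solvable iff'' elimination is a slightly tidier packaging of the paper's two-directional feasibility argument, but the content is identical.
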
 

Our proof is divided into two main parts:
\begin{enumerate}
    \item When $H_1= \col(U_\p, Y_\p, U_\f)$ has full row rank, we show that \eqref{eqn:DD-SPC} and \eqref{eqn:SPC-main} have the same feasible region: if $\sigma_y, u, y, g$ is feasible to \eqref{eqn:DD-SPC}, then the same $\sigma_y, u, y$ is also feasible for \eqref{eqn:SPC-main}. Conversely, given any feasible solution $\sigma_y, u, y$ to \eqref{eqn:SPC-main}, there exists a vector $g$ such that $\sigma_y, u, y, g$ is feasible to \eqref{eqn:DD-SPC}.
    \item The \eqref{eqn:DD-SPC} and \eqref{eqn:SPC-main} have the same cost function in terms of $u, y, \sigma_y$. 
\end{enumerate}
Combining the two properties above with the fact that the cost function in \eqref{eqn:SPC-main} is strongly convex, we conclude \eqref{eqn:DD-SPC} and \eqref{eqn:SPC-main} have the same unique optimal solution for decision variables $\sigma_y, u$ and $y$.

The property 2 above is obvious. We prove the property~1 below. Let us first decompose 
     \begin{equation} \label{eq:ortho-decomposition-Yf}
     Y_\f = M+M^\bot,
     \end{equation}
     where $M$ (\emph{i.e.}, $Y_\f \Pi_1$) is the (row space) orthogonal projection of $Y_\f$ on the row space of $H_1$ and $M^\bot$ (\emph{i.e.}, $Y_\f(I - \Pi_1)$) is the rest part of $Y_\f$ in the null space of $H_1$. Since $H_1$ has full row rank, thanks to the property of Moore–Penrose inverse, we have $H_1 H_1^\dag = I$ and the range space of $H_1^\dag$ is the same as the row space of $H_1$, which means $M^\bot H_1^\dag = 0$. 
    
    We assume $u_1, y_1, \sigma_{y_1}, g_1$ is a feasible solution for \eqref{eqn:DD-SPC}. Then, without loss of generality, the vector $g_1$ can be represented as 
    \[
    g_1 = H_1^\dag \col(
        u_\ini, y_\ini + \sigma_{y_1}, u_1
    ) + \hat{g}
    \]
    where $\hat{g}$ is a vector in the null space of $H_1$. We have $M \hat{g} = 0$ since $H_1\hat{g} = 0$, and $Y_\f H_1^\dag = (M+M^\bot)H_1^\dag = M H_1^\dag$ because $M^\bot H_1^\dag = 0$. Thus, from the equality constrain in \eqref{eqn:DD-SPC}, the vector $y_1$ satisfies
    \begin{equation}
    \label{eqn:relate-y-YF}
    \begin{aligned}
    y_1 & = M g_1 = M H_1^\dag \begin{bmatrix}
        u_\ini \\ y_\ini + \sigma_{y_1} \\ u_1
    \end{bmatrix} + M \hat{g} \\
    & = M H_1^\dag \begin{bmatrix}
        u_\ini \\ y_\ini + \sigma_{y_1} \\ u_1
    \end{bmatrix} 
    = Y_\f H_1^\dag \begin{bmatrix}
        u_\ini \\ y_\ini + \sigma_{y_1} \\ u_1
    \end{bmatrix},
    \end{aligned}
    \end{equation}
    which means $u_1, y_1, \sigma_{y_1}$ is also a feasible solution of $\eqref{eqn:SPC-main}$.
    
    We next assume $u_1, y_1, \sigma_{y_1}$ is a feasible solution for \eqref{eqn:SPC-main}. Substituting the orthonormal decomposition \eqref{eq:ortho-decomposition-Yf} into the equality constraint of \eqref{eqn:SPC-main}, we have
    \begin{equation}
    \label{eqn:Yf_derive}
    \begin{aligned}
    y_1 & = Y_\f H_1^\dag \begin{bmatrix}
        u_\ini \\ y_\ini + \sigma_{y_1} \\ u_1
    \end{bmatrix} \\
    & = (M +M^\bot) H_1^\dag \begin{bmatrix}
        u_\ini \\ y_\ini + \sigma_{y_1} \\ u_1
    \end{bmatrix} \\
     & = M H_1^\dag \begin{bmatrix}
        u_\ini \\ y_\ini + \sigma_{y_1} \\ u_1
    \end{bmatrix}.
    \end{aligned}
    \end{equation}
    Upon defining $g_1 = H_1^\dag \col(
        u_\ini, y_\ini + \sigma_{y_1}, u_1
    )$, we have $y_1 = M g_1$ from \eqref{eqn:Yf_derive}. We then substitute $g_1$ into the equality constraint of \eqref{eqn:DD-SPC}, leading to
    \[
    \begin{bmatrix}
        H_1 \\ M
    \end{bmatrix} g_1 = 
    \begin{bmatrix}
    H_1 H_1^\dag \begin{bmatrix}
        u_\ini \\ y_\ini + \sigma_{y_1} \\ u_1
    \end{bmatrix}  \\
    M g_1
    \end{bmatrix}
    = \begin{bmatrix}
        u_\ini \\ y_\ini + \sigma_{y_1} \\ u_1 \\
        y_1
    \end{bmatrix},
    \]
where we have used the fact $H_1 H_1^\dag = I$ since $H_1$ has full row rank. This means that $u_1, y_1, \sigma_{y_1}, g_1$ is a feasible solution for \eqref{eqn:DD-SPC}. This completes our proof.  

\subsection{Proof of Proposition \ref{prop:spc-equival}}
\label{appendix:spc-equival}
As \eqref{eqn:bi-level-SPC} and \eqref{eqn:DD-SPC} are equivalent directly from construction, we here mainly prove the equivalence of \eqref{eqn:DD-SPC} and \eqref{eqn:reg-proj}. It is obvious that \eqref{eqn:DD-SPC} and \eqref{eqn:reg-proj} have the same objective function and it only contains decision variables $u, y$ and $\sigma_y$. Thus, we show that \eqref{eqn:DD-SPC} and \eqref{eqn:reg-proj} provide the same unique optimal solution $u^*, y^*$ and $\sigma_y^*$ by proving:
\begin{enumerate}
    \item Feasible regions of $u,y, \sigma_y$ are the same for~\eqref{eqn:DD-SPC} and \eqref{eqn:reg-proj}: if $u, y, \sigma_y, g$ is feasible to  \eqref{eqn:reg-proj}, then the same $u, y, \sigma_y, g$ is also feasible for \eqref{eqn:DD-SPC}. Conversely, given any feasible solution $u, y, \sigma_y, g$ to \eqref{eqn:DD-SPC}, there exists a vector $\tilde{g}$ such that $u, y, \sigma_y, \tilde{g}$ is feasible to~\eqref{eqn:reg-proj}.
    \item The optimal solution of $u, y, \sigma_y$ is unique for~\eqref{eqn:DD-SPC}. 
\end{enumerate}

We assume $u_1, y_1, \sigma_{y_1}, g_1$ is a feasible solution for~\eqref{eqn:reg-proj}. Substituting the orthogonal decomposition \eqref{eq:ortho-decomposition-Yf} of $Y_\f$ into \eqref{eqn:reg-proj-1}, we have
\[
H_\D g_1
= \!\begin{bmatrix}
    U_\p \\ Y_\p \\ U_\f \\ M + Y_\f(I-\Pi_1)
\end{bmatrix} g_1 \!
= H_\s^* g_1 \!
= \! \begin{bmatrix}
    u_\ini \\ y_\ini+\sigma_{y_1} \\ u_1 \\ y_1
\end{bmatrix}, 
\]
where we have applied the fact that $(I-\Pi_1)g_1=0$ from \eqref{eqn:reg-proj-2}. Thus, the set of variables $u_1, y_1, \sigma_{y_1}$ and $g_1$ is also a feasible solution for~$\eqref{eqn:DD-SPC}$.

We next assume $u_1, y_1, \sigma_{y_1}$ and $g_1$ is a feasible solution for \eqref{eqn:DD-SPC}. We define $\tilde{g}_1 = H_1^\dag 
\col(u_\ini, y_\ini + \sigma_{y_1}, u_1)$, which satisfies $y_1 = Y_\f \tilde{g}_1$ from \eqref{eqn:relate-y-YF}. We first verify that $\tilde{g}_1$, together with $u_1, y_1, \sigma_{y_1}$, satisfies \eqref{eqn:reg-proj-1}: 
\[
H_\D \tilde{g}_1 = 
\begin{bmatrix}
    H_1   \\
    Y_\f
\end{bmatrix}
H_1^\dag \begin{bmatrix}
        u_\ini \\ y_\ini + \sigma_{y_1} \\ u_1
        \end{bmatrix} = \begin{bmatrix}
            u_\ini \\ y_\ini + \sigma_{y_1} \\ u_1 \\ y_1
        \end{bmatrix}.
\]
For the satisfaction of \eqref{eqn:reg-proj-2}, since $\tilde{g}_1$ is in the range space of $H_1^\dag$ and $\Pi_1$ is the orthogonal projector onto the row space of $H_1$, we have $\Pi_1 \tilde{g}_1 = \tilde{g}_1$ (the range space of $H_1^\dag$ and row space of $H_1$ are equivalent), which implies $\|(I-\Pi_1)\tilde{g}_1\|_2 = \|\tilde{g}_1 -\tilde{g}_1\|_2 = 0$. Thus, $u_1, y_1, \sigma_{y_1}$ and $\tilde{g}_1$ is a feasible solution for~\eqref{eqn:reg-proj}.

The uniqueness of the optimal solution $u^*, y^*$ and $\sigma_y^*$ for \eqref{eqn:DD-SPC} basically comes from strong convexity. For notational simplicity, we define $x_\D = \col(u, y, \sigma_{y}, g)$ and $f_\D(x_\D) = \|y\|_Q^2 + \|u\|_R^2 + \lambda_y \|\sigma_y\|_2^2$ as the decision variable and objective function of \eqref{eqn:DD-SPC}, respectively. Suppose that $x_{\D_1}$ and $x_{\D_2}$ are two optimal solutions with different $u, y$ or $\sigma_y$. Let the optimal value be $f_\D^*$. We then construct a convex combination $x_{\D_3} = \alpha x_{\D_1} + (1- \alpha) x_{\D_2}$ where $0 < \alpha < 1$. This new point $x_{\D_3}$ is also feasible as
\[
\begin{aligned}
\begin{bmatrix}
    U_\p \\ Y_\p \\ U_\f \\ M
\end{bmatrix} g_3 &=
\begin{bmatrix}
    U_\p \\ Y_\p \\ U_\f \\ M
\end{bmatrix} (\alpha g_1 + (1-\alpha)g_2) \\
& = \begin{bmatrix}
    u_\ini \\ y_\ini + \alpha \sigma_{y_1} + (1-\alpha) \sigma_{y_2} \\ \alpha u_1 + (1-\alpha)u_2 \\ \alpha y_1 + (1-\alpha)y_2 
\end{bmatrix} = 
\begin{bmatrix}
    u_\ini \\ y_\ini + \sigma_{y_3} \\ u_3 \\ y_3   
\end{bmatrix}. 
\end{aligned}
\] 
It is obvious that $f_\D(\cdot)$ is a strongly convex function with respect to $u, y$ and $\sigma_y$ and its value is not affected by $g$. Thus, we have
$
f_\D(x_{\D_3}) = f_\D(\alpha x_{\D_1} + (1-\alpha) x_{\D_2}) < \alpha f_\D(x_{\D_1}) + (1-\alpha) f_\D(x_{\D_2}) = f_\D^*,
$ 
which contradicts our assumption. The optimal solution to \eqref{eqn:DD-SPC} is thus unique. This completes our proof.

\subsection{Proof of Proposition \ref{them:c-spc-equival}}
\label{appendix:c-spc-equival}
Similar to the proof in the Appendix \ref{appendix:spc-equival}, we here mainly prove feasible regions of $u, y, \sigma_y$ are equivalent for \eqref{eqn:c-gamma-DeePC} and \eqref{eqn:reg-causal}. Then, with the same objective function, their optimal solutions are also the same. The proof for the uniqueness of the optimal solution utilizes the strong convexity of the objective function of \eqref{eqn:c-gamma-DeePC} following the same procedure in Appendix \ref{appendix:spc-equival}. We omit the proof for the uniqueness here. We recall that we have $H_1 = \col(U_\p, Y_\p, U_\f) = \begin{bmatrix}
    L_{11} & \mathbb{0} \\ L_{21} & L_{22}
\end{bmatrix} \begin{bmatrix}
    Q_1 \\ Q_2
\end{bmatrix}$ from the LQ decomposition of $H_\D$. As we assume $H_\D$ has full row rank, the matrix $\begin{bmatrix}
    L_{11} & \mathbb{0} \\ L_{21} & L_{22}
\end{bmatrix}$ is invertible.

Let $u_1, y_1, \sigma_{y_1}$ and $g_1$ be a set of feasible solution for \eqref{eqn:reg-causal}. Substituting \eqref{eqn:reg-causal-2} into \eqref{eqn:reg-causal-1} leads to
\vspace{-3mm}
{\small\[
\begin{aligned}
\hat{H} g_1 \! =\! \begin{bmatrix}
        L_{11} & \mathbb{0} & \mathbb{0} & \mathbb{0}\\
        L_{21} & L_{22} & \mathbb{0} & \mathbb{0}\\
        L_{31} & L_{32}^* & L_{33} & L_{32}'
\end{bmatrix}\! 
    \begin{bmatrix}
        Q_1 g_1 \\ Q_2 g_1 \\ \mathbb{0} \\ \mathbb{0}
    \end{bmatrix} 
     \!= \! H_\sca^* g_1 \!= \! \begin{bmatrix}
            u_\ini \\ y_\ini + \sigma_{y_1} \\ u_1 \\ y_1
        \end{bmatrix},
\end{aligned}
\]}
thus, $u_1, y_1, \sigma_{y_1}$ and $g_1$ is also a feasible solution for~\eqref{eqn:c-gamma-DeePC}.

We next choose a feasible solution $u_1, y_1, \sigma_{y_1}$ and $g_1$ of~\eqref{eqn:c-gamma-DeePC}. We can construct $\tilde{g}_1$ as 
\vspace{-2mm}
\[
\begingroup
    \setlength\arraycolsep{5pt}
    \def\arraystretch{0.95} 
\label{eqn:g-construct}
\begin{aligned}
\tilde{g}_1 & = H_1^\dag \begin{bmatrix}
    u_\ini \\ y_\ini + \sigma_{y_1} \\ u_1
\end{bmatrix} \\
&= \begin{bmatrix}
    Q_1^\tr & Q_2^\tr 
\end{bmatrix}
\begin{bmatrix}
    L_{11} & \mathbb{0} \\ L_{21} & L_{22}
\end{bmatrix}^{-1} \begin{bmatrix}
    u_\ini \\ y_\ini + \sigma_{y_1} \\ u_1
\end{bmatrix},
\end{aligned}
\vspace{-2mm}
\endgroup
\]
so that $u_1, y_1, \sigma_{y_1}$ and $\tilde{g}_1$ is an optimal solution of \eqref{eqn:reg-causal}. Since $\col(Q_1, Q_2, Q_3, Q^*)$ has orthonormal rows, we have 
\[
Q_\cs \tilde{g}_1 =\underbrace{\begin{bmatrix}
    Q_3 \\ Q^*
\end{bmatrix} \begin{bmatrix}
    Q_1^\tr & Q_2^\tr 
\end{bmatrix}}_{=\mathbb{0}}
\begin{bmatrix}
    L_{11} & \mathbb{0} \\ L_{21} & L_{22}
\end{bmatrix}^{-1} \begin{bmatrix}
    u_\ini \\ y_\ini + \sigma_{y_1} \\ u_1
\end{bmatrix} = \mathbb{0}
\] 
and \eqref{eqn:reg-causal-2} is naturally satisfied. Further substituting $\tilde{g}_1$ to \eqref{eqn:reg-causal-1} leads to
\[
\begin{aligned}
&\begin{bmatrix}
    L_{11} & \mathbb{0} & \mathbb{0} & \mathbb{0} \\
    L_{21} & L_{22} & \mathbb{0} & \mathbb{0} \\
    L_{31} & L_{32}^* & L_{33} & L_{32}'
\end{bmatrix}
\begin{bmatrix}
    Q_1 \\ Q_2 \\ Q_3 \\ Q^*
\end{bmatrix} \tilde{g}_1 \\
= & \begin{bmatrix}
u_{\ini}\\
y_{\ini}+\sigma_{y_1}\\
u_1\\
\begin{bmatrix}
    L_{31} & L_{32}^*
\end{bmatrix}
\begin{bmatrix}
    Q_1 \\ Q_2
\end{bmatrix}\tilde{g}_1\\
\end{bmatrix} 
=\begin{bmatrix}
u_{\ini}\\
y_{\ini}+\sigma_{y_1}\\
u_1\\
y_1\\
\end{bmatrix},
\end{aligned}
\]
where the second equality comes from we can represent $g_1$ as $g_1 = \tilde{g}_1 + \hat{g}$ with $\hat{g} \in \textrm{null}(H_1) = \textrm{null}(\col(Q_1, Q_2))$ and then
\[
\begin{bmatrix}
    L_{31} & L_{32}^*
\end{bmatrix}
\begin{bmatrix}
    Q_1 \\ Q_2
\end{bmatrix}\tilde{g}_1
= \begin{bmatrix}
    L_{31} & L_{32}^*
\end{bmatrix}
\begin{bmatrix}
    Q_1 \\ Q_2
\end{bmatrix}g_1= y_1.
\vspace{-2mm}
\]
That shows the satisfaction of \eqref{eqn:reg-causal-1}.

\subsection{Proof of Corollaries \ref{them:DeeP-exact} and \ref{them:c-DeeP-exact}}
\label{appendix:exact-penalty}
We prove Corollaries \ref{them:DeeP-exact} and \ref{them:c-DeeP-exact} via showing both \eqref{eqn:reg-proj} and \eqref{eqn:reg-causal} can be represented in the form of \eqref{eqn:qp-c-main}. Thus, Corollaries \ref{them:DeeP-exact} and \ref{them:c-DeeP-exact} are actually special cases of Theorem~\ref{them:x-exact-main}.

For changing \eqref{eqn:reg-proj} to the same form as \eqref{eqn:qp-c-main}, we can let
\begin{gather}
    x_1 := \col(\sigma_y, u, y), \quad  x_2 := g,  \quad \mathcal{X} := \Gamma \times \mathcal{U} \times \mathcal{Y},  \nonumber\\
    A_1 := \col(\mathbb{0}, I), \quad  A_2 := -H_\D, \quad  b := -\col(u_\ini, y_\ini, \mathbb{0}), \nonumber \\
    M:=\mathrm{diag}(\lambda_y I, R, Q), \quad D:=(I-\Pi_1). \nonumber
\end{gather}
Similarly, we transform \eqref{eqn:reg-causal} to the form of \eqref{eqn:qp-c-main} by letting 
\begin{gather}
    x_1 := \col(\sigma_y, u, y), \quad  x_2 := g,  \quad \mathcal{X} := \Gamma \times \mathcal{U} \times \mathcal{Y},  \nonumber\\
    A_1 := \col(\mathbb{0}, I), \quad  A_2 := -\hat{H}, \quad  b := -\col(u_\ini, y_\ini, \mathbb{0}), \nonumber \\
    M:=\mathrm{diag}(\lambda_y I, R, Q), \quad D:=Q_\cs. \nonumber
\end{gather}
We note that the only differences between transformations of \eqref{eqn:reg-proj} and \eqref{eqn:reg-causal} are the choices of $D$ and $A_2$. 

As $\Gamma, \mathcal{U}, \mathcal{Y}$ are convex sets, $\mathcal{X}$ defined above for \eqref{eqn:reg-proj} and \eqref{eqn:reg-causal} are convex. Thus, all conditions in Theorem \ref{them:x-exact-main} are satisfied and that completes the proof.

\subsection{Proof of Proposition \ref{prop:opt-low-rank}}
\label{appendix:opt-low-rank}
The key idea for the proof of the Proposition \ref{prop:opt-low-rank} is that we can first partition both $H_y$ and $\tilde{H}_y$ into the part in the row space of $H_u$ and the part in the null space of $H_u$, \emph{i.e.}, $H_y = H_1 + H_2$, $\tilde{H}_y = \tilde{H}_1 + \tilde{H}_2$, where $\row(H_1), \row(\tilde{H}_1) \subseteq \row(H_u)$ and $\row(H_2), \row(\tilde{H}_2) \subseteq \nsp(H_u)$. Then, we illustrate that the optimal solution for $\tilde{H}_1$ is $H_1$ and thus the problem becomes approximately $H_2$ with a low rank matrix $\tilde{H}_2$ which admits an analytical solution via SVD.

We can equivalently formulate the optimization problem \eqref{eqn:low-rank-approx} as 
\begin{equation}
\label{eqn:low-rank-approx-sep}
    \begin{aligned} 
\min_{\tilde{H}_1, \tilde{H}_2} \quad & \|H_1 + H_2 - \tilde{H}_1 - \tilde{H}_2 \|_F \\
\mathrm{subject~to} \quad & \textrm{rank}(\tilde{H}_2) = n, \\ 
& \row(\tilde{H}_1) \subseteq \row(H_u), \\
&\row(\tilde{H}_2) \subseteq \nsp(H_u).
\end{aligned}
\end{equation}
The equivalence of \eqref{eqn:low-rank-approx} and \eqref{eqn:low-rank-approx-sep} comes from 
\begin{enumerate}
    \item For any feasible solution $\tilde{H}_y$ of \eqref{eqn:low-rank-approx}, we can let $\tilde{H}_1 = \tilde{H}_y \Pi_2$ and $\tilde{H}_w = \tilde{H}_y (I-\Pi_2)$, which satisfies constraints for \eqref{eqn:low-rank-approx-sep} and provides the same value for the objective function.
    \item Conversely, for any feasible solution $\tilde{H}_1$ nad $\tilde{H}_2$ of \eqref{eqn:low-rank-approx-sep}, we can let $\tilde{H}_y = \tilde{H}_1 + \tilde{H}_2$ which is feasible for \eqref{eqn:low-rank-approx} with the same objective function value.
\end{enumerate}
We can expand the objective function in \eqref{eqn:low-rank-approx-sep} as 
\[
\begin{aligned}
& \|H_1 + H_2 -\tilde{H}_1 - \tilde{H}_2\|_F  \\
= & \ \mathrm{tr}((H_1 + H_2 -\tilde{H}_1 - \tilde{H}_2)(H_1 + H_2 -\tilde{H}_1 - \tilde{H}_2)^\tr) \\
= & \ \mathrm{tr}((H_1 -\tilde{H}_1)(H_1 -\tilde{H}_1)^\tr) + \mathrm{tr}((H_2 -\tilde{H}_2)(H_2 -\tilde{H}_2)^\tr) \\
& \ + 2 \mathrm{tr}((H_1 -\tilde{H}_1)(H_2 -\tilde{H}_2)^\tr)) \\
= & \ \|H_1- \tilde{H}_1\|_F + \|H_2 -\tilde{H}_2\|_F,
\end{aligned}
\]
where the third equality is derived from $\row(H_1), \row(\tilde{H}_1) \subseteq \row(H_u)$ and $\row(H_2), \row(\tilde{H}_2) \subseteq \nsp(H_u)$. Thus, it is obvious that the optimal solution of $\tilde{H}_1$ is $H_1$ and we can simplify \eqref{eqn:low-rank-approx-sep} as 
\begin{equation}
\label{eqn:low-rank-approx-sim}
    \begin{aligned} 
\min_{\tilde{H}_2} \quad & \|H_2 - \tilde{H}_2 \|_F \\
\mathrm{subject~to} \quad & \textrm{rank}(\tilde{H}_2) = n, \\ 
&\row(\tilde{H}_2) \subseteq \nsp(H_u),
\end{aligned}
\end{equation}
which admits an analytical solution from the SVD that is $\tilde{H}_2^* = \sum_{i=1}^n \bar{\sigma}_i \bar{u}_i \bar{v}_i^\tr$ where $H_2 = \sum_{i=1}^{pL} \bar{\sigma}_i \bar{u}_i \bar{v}_i^\tr$. We note that $\row(\tilde{H}_2^*) \subseteq \row(H_2) \subseteq \nsp(H_u)$. That completes the proof.

\section{Additional example: Two-wheeled robot}
\label{appendix:two-wheeled-robot}
We here present the closed-loop control performance of \method{DDPC} variants and \method{Equality-form SPC} for a two-wheeled robot to illustrate the effectiveness of the framework in a more realistic application. The kinematic model of the robot is nonlinear~\cite{li2019online}
\begin{equation}
\label{eq:robot}
\begin{aligned}
    z_x(k+1) & = z_x(k) + \Delta t \cdot \cos(z_\delta(k))\cdot v(k), \\
    z_y(k+1) & = z_y(k) + \Delta t \cdot \sin(z_\delta(k))\cdot v(k), \\
    z_{\delta}(k+1) & = z_\delta(k) + \Delta t \cdot w_t,
\end{aligned}
\end{equation}
where $(z_x, z_y, z_\delta)$ denotes the position and heading angle of the robot (\emph{i.e.}, state), $v, w$ represent tangential and angular velocities (\emph{i.e.}, input) and $\Delta t=0.025\, \mathrm{s}$ is the sampling time. We aim to track the following heart-shaped reference trajectory $(r_x, r_y, r_\delta)$ while minimizing the control effort
\[
\begin{aligned}
    r_x(t) =&  \ 16\sin^3(t-6), \\ 
    r_y(t) =& \ 13\cos(t)-5\cos(2t-12) \\
    & \ -2\cos(3t-18)-\cos(4t-24), \\
    r_\delta(t) =& \ \arctan\left(\frac{r_y(t+1)-r_y(t)}{r_x(t+1)-r_x(t)}\right).
\end{aligned}
\]
We choose the cost matrices $Q=\mathrm{diag}(2,2,8), $ and $R=6.25\times 10^{-4}$, and set the weighting parameter $\lambda_y = 3\times 10^6$ for all approaches. The remaining parameters are tuned separately for each approach.

To improve the accuracy of the data-driven representation and the tracking performance, we partition the state space into different regions and construct a corresponding data-driven model for each of them. This can be viewed as a local linear approximation of the nonlinear system. Specifically, as the nonlinearity of the system mainly arises from the coupled terms $\sin(z_\delta)\cdot v$ and $\cos(z_\delta)\cdot v$ in states $z_x$ and $z_y$, we divide the state-space into $4$ regions according to the orientation, that are $[0, \frac{\pi}{2}), [\frac{\pi}{2}, \pi), [\pi, \frac{3\pi}{2})$ and $[\frac{3\pi}{2}, 2\pi)$. We collect $4$ trajectories of length $1200$ with initial states $(0, 0 ,\frac{\pi}{4}), (0, 0, \frac{3\pi}{4}), (0, 0, \frac{5\pi}{4})$ and $(0, 0, \frac{7\pi}{4})$ to capture the behavior of the nonlinear system within these regions. The input signals $v$ and $w$ are sampled uniformly from $[10, 20]$ and $[-\frac{\pi}{6}, \frac{\pi}{6}]$, respectively. Then, for each approach, we construct $4$ corresponding trajectory libraries over these regions with $T_\ini = 5$ and $N = 12$. During the online tracking, we switch the data library based on the robot's current orientation.

The closed-loop tracking performance of different approaches is shown in Fig. \ref{fig:robot-tracking}. All approaches are able to approximately track the prescribable reference trajectory. Among them, the \method{L-DDPC} and \method{SPC} exhibit noticeable deviation and oscillation behavior, while the \method{C-DDPC} and \method{O-DDPC} achieve superior tracking performance. 
\begin{figure}[t]
\setlength{\abovedisplayskip}{0pt}
\centering
\subfigure[\method{L-DDPC}]{\includegraphics[width=0.23\textwidth, trim={3mm 20mm 5mm 17mm},clip]{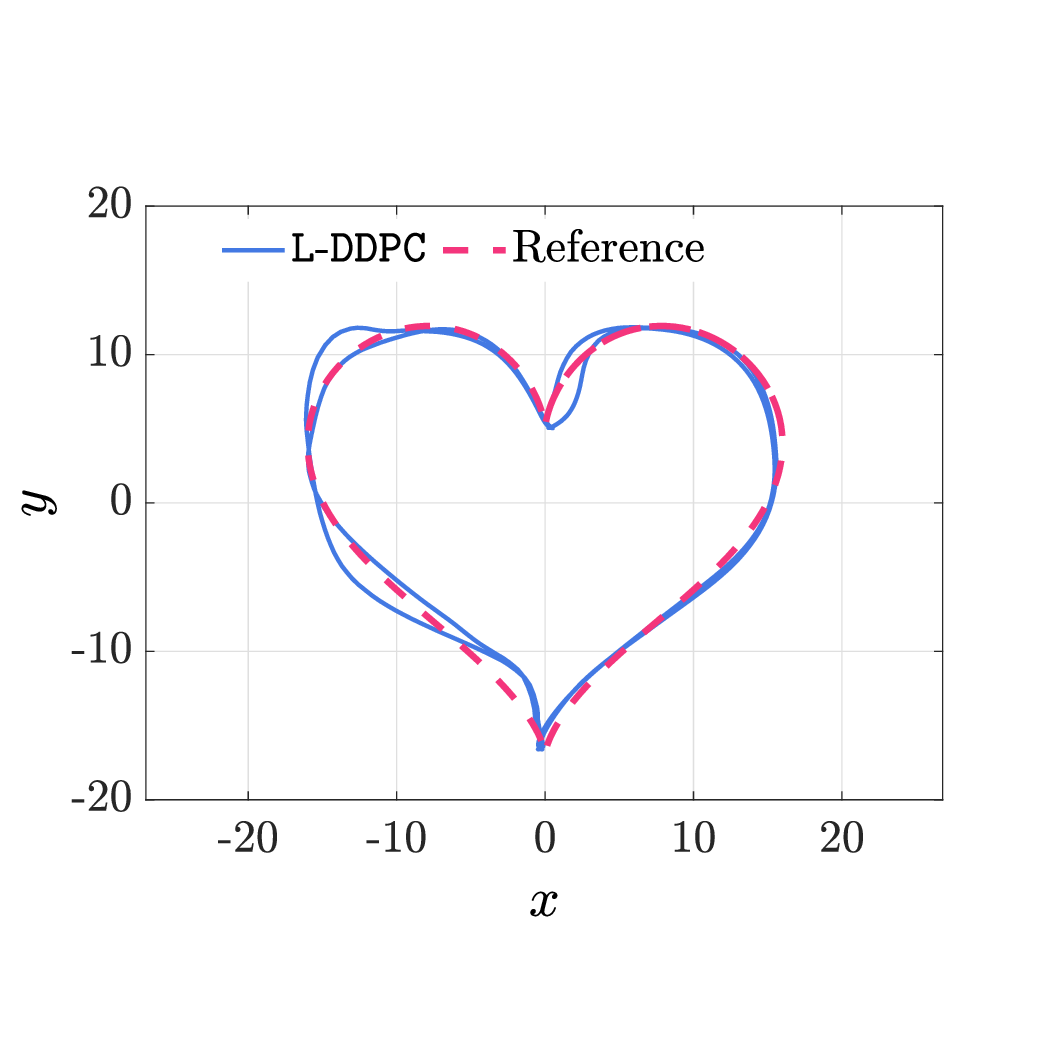}} \hspace{-2mm}
\subfigure[\method{SPC}]{\includegraphics[width=0.23\textwidth, trim={3mm 20mm 5mm 17mm},clip]{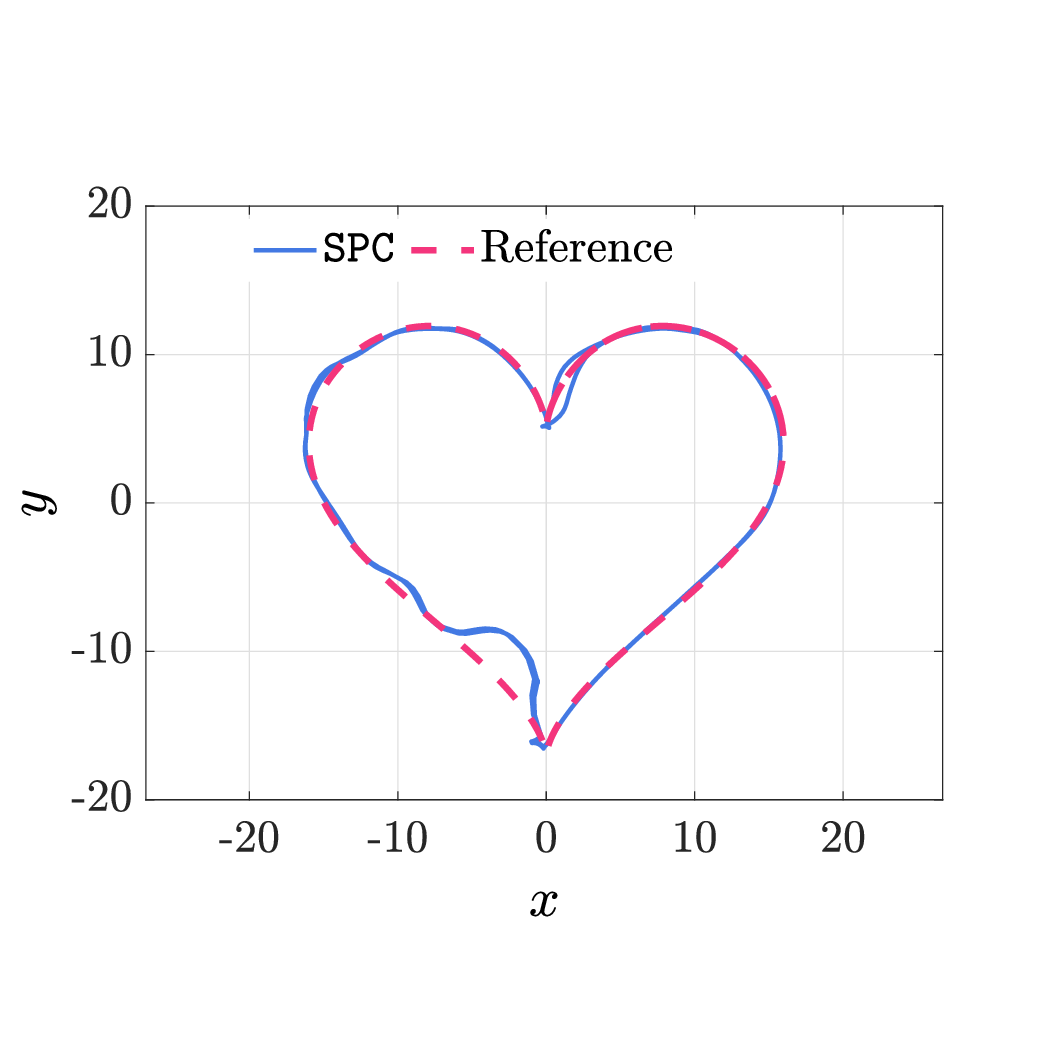}} \\
\vspace{-3mm}
\subfigure[\method{C-DDPC}]{\includegraphics[width=0.23\textwidth, trim={3mm 20mm 5mm 17mm},clip]{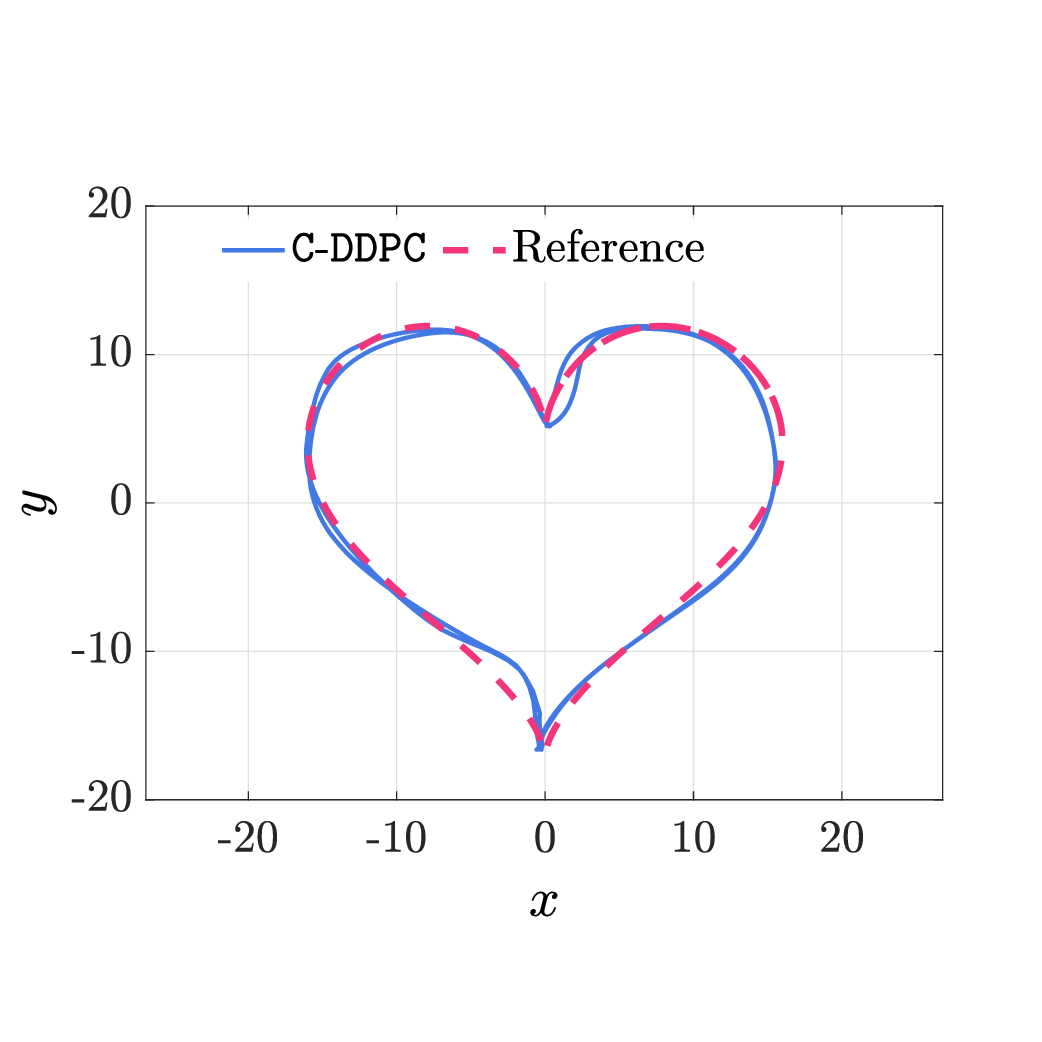}} \hspace{-2mm}
\subfigure[\method{A-DDPC}]{\includegraphics[width=0.23\textwidth, trim={3mm 20mm 5mm 17mm},clip]{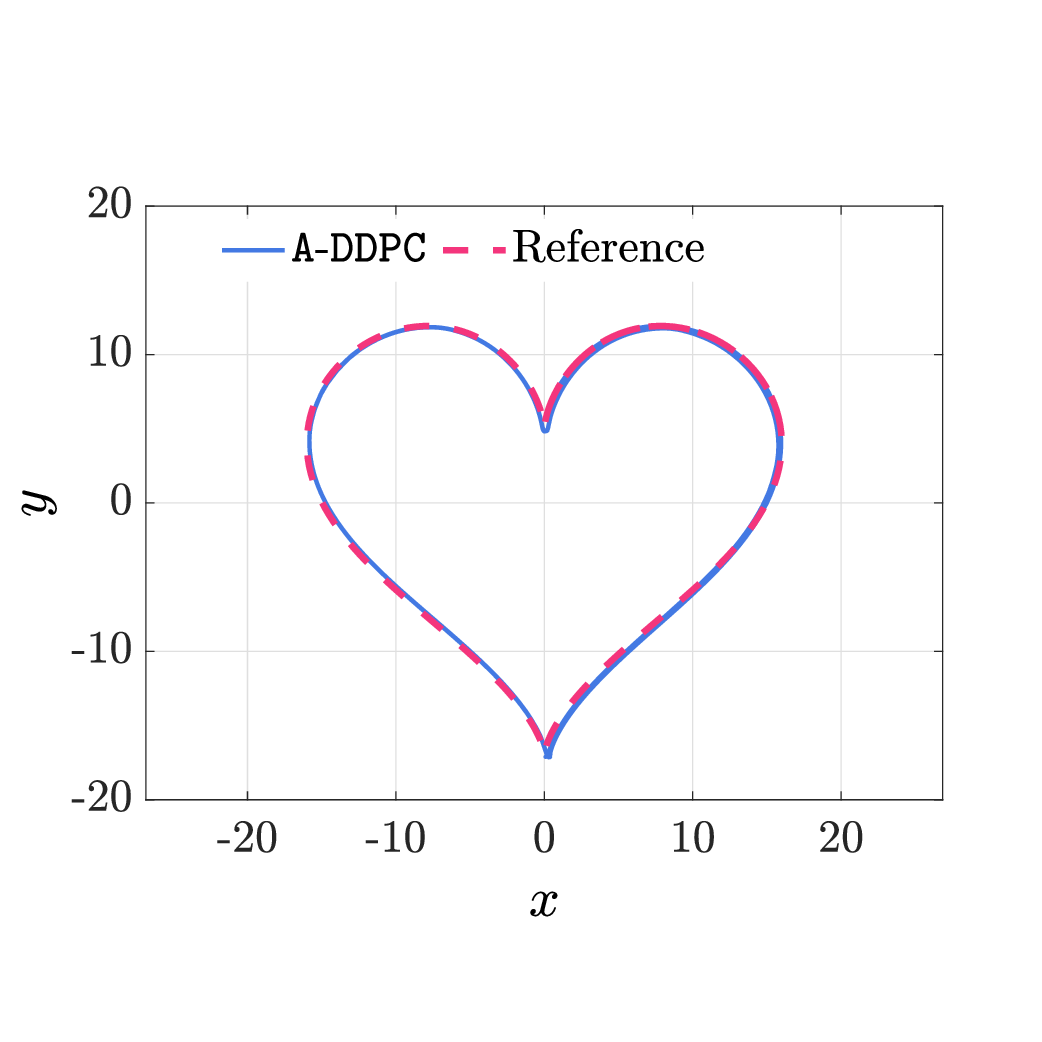}}  \\
\vspace{-2mm}
\caption{Closed-loop tracking performance of the two-wheel robot with different \method{DDPC} variants.}
\label{fig:robot-tracking}
\end{figure}
\begin{figure}[t]
\setlength{\abovedisplayskip}{0pt}
\centering
\subfigure[\method{L-DDPC}]{\includegraphics[width=0.23\textwidth, trim={3mm 20mm 5mm 17mm},clip]{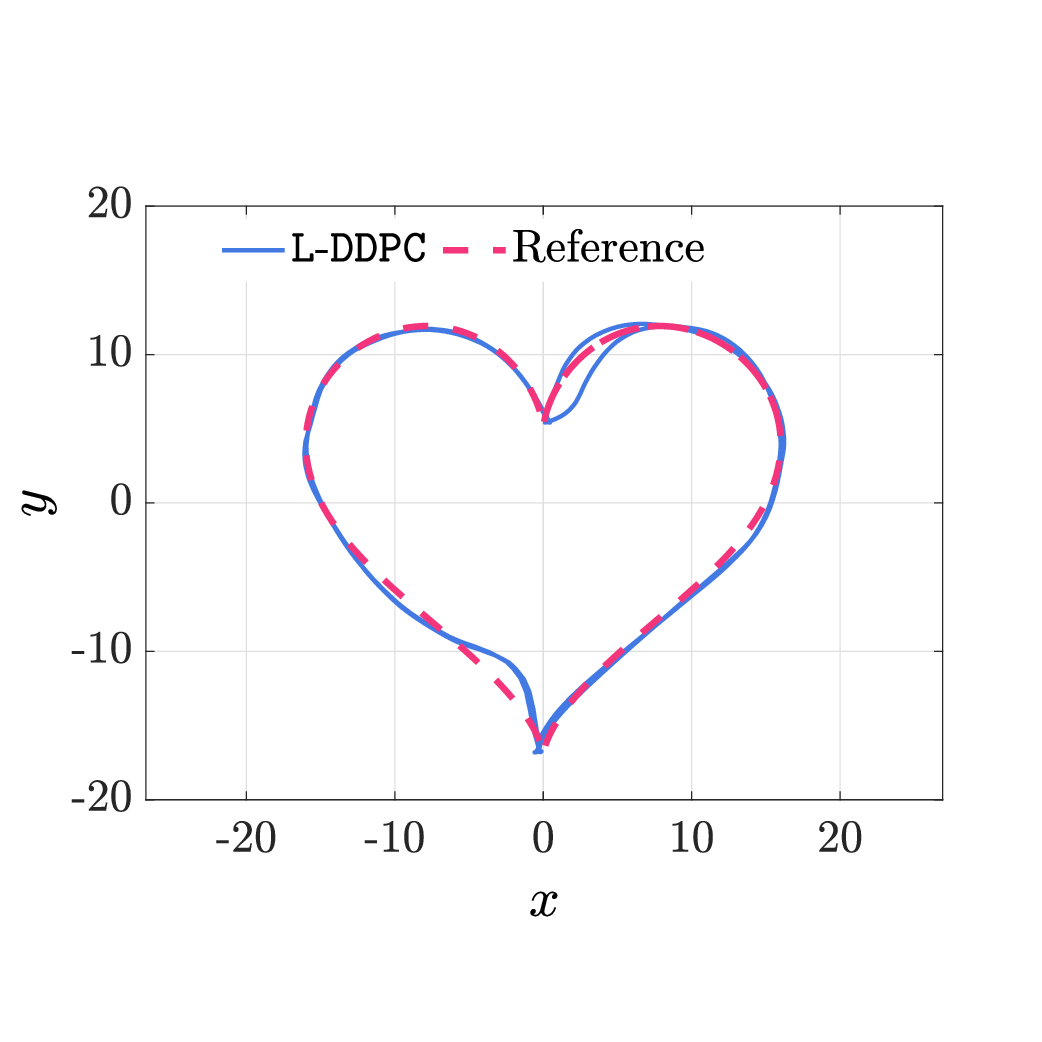}} \hspace{-2mm}
\subfigure[\method{SPC}]{\includegraphics[width=0.23\textwidth, trim={3mm 20mm 5mm 17mm},clip]{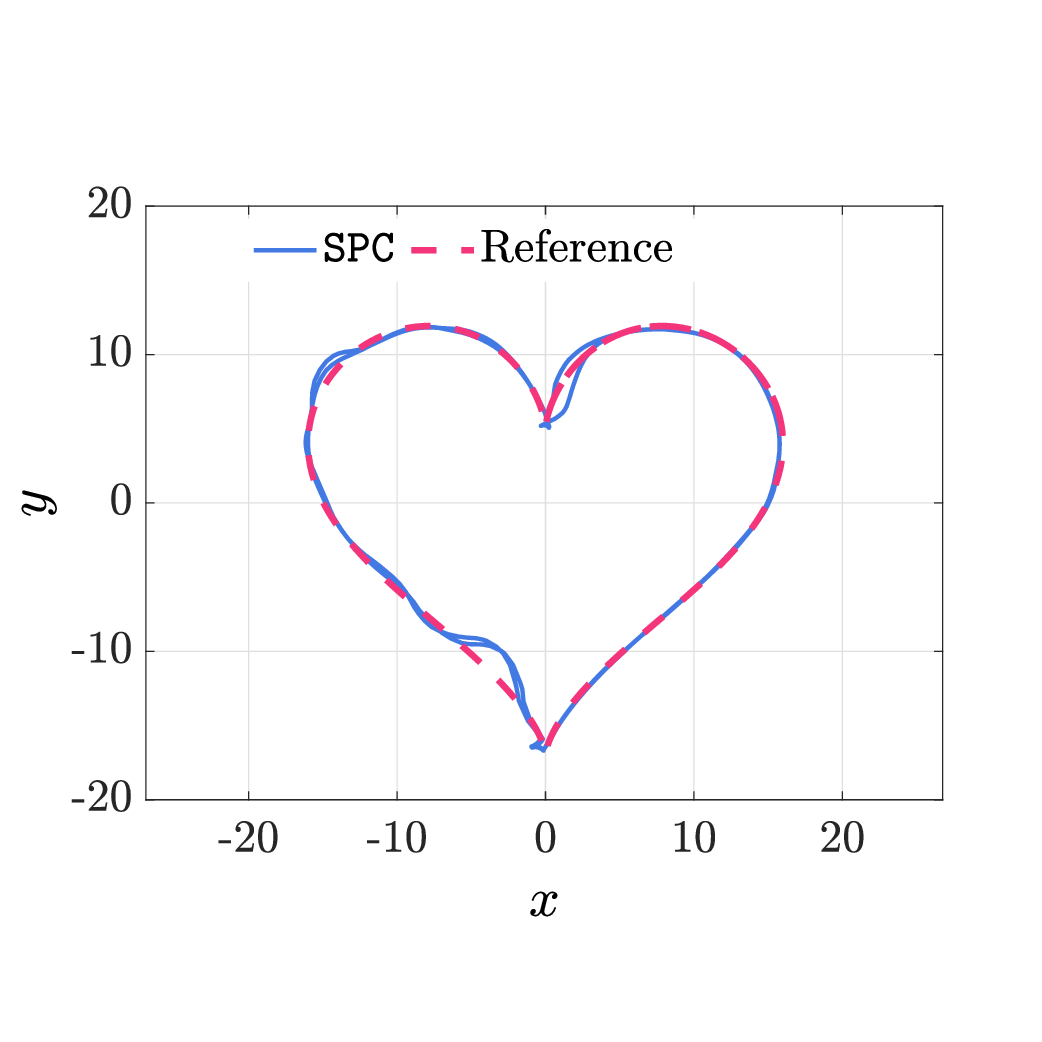}} \\
\vspace{-3mm}
\subfigure[\method{C-DDPC}]{\includegraphics[width=0.23\textwidth, trim={3mm 20mm 5mm 17mm},clip]{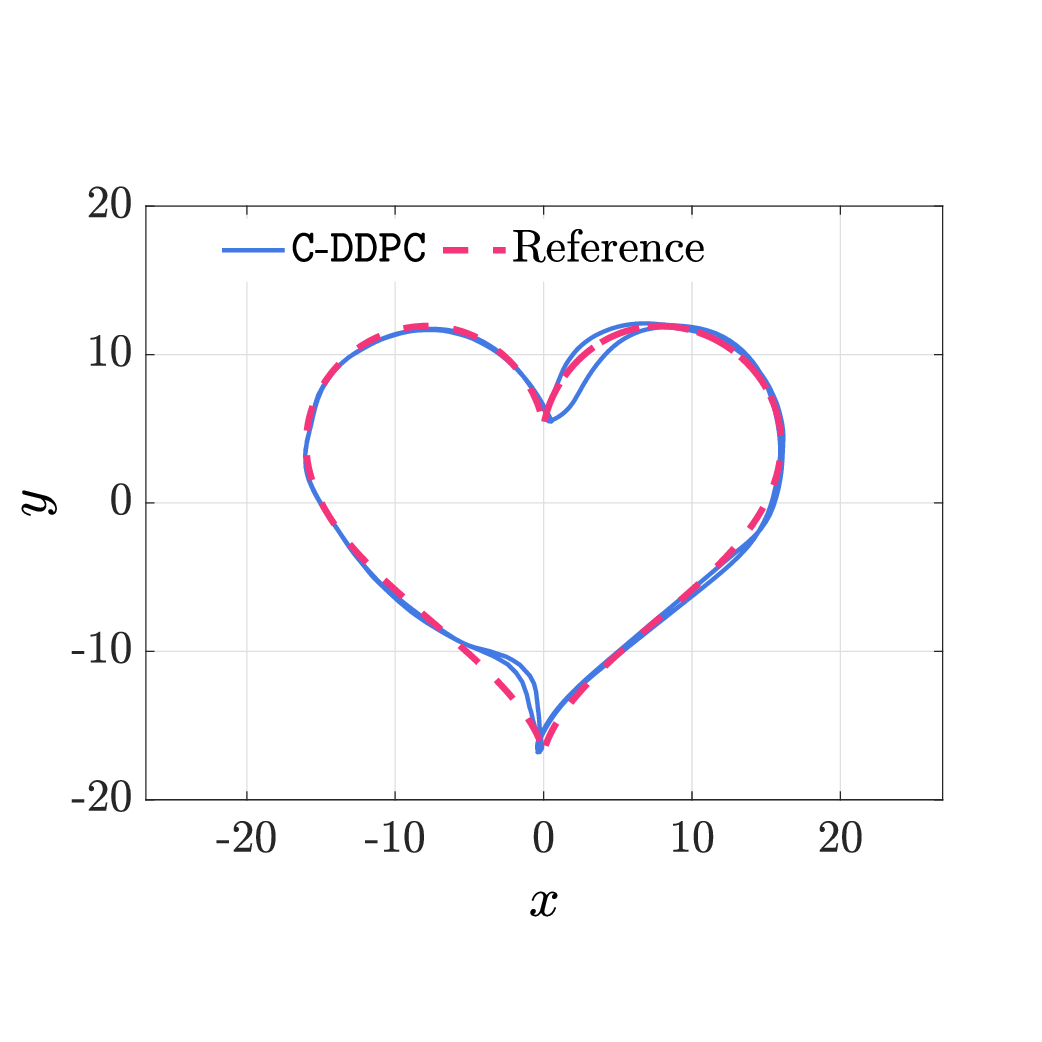}} \hspace{-2mm}
\subfigure[\method{A-DDPC}]{\includegraphics[width=0.23\textwidth, trim={3mm 20mm 5mm 17mm},clip]{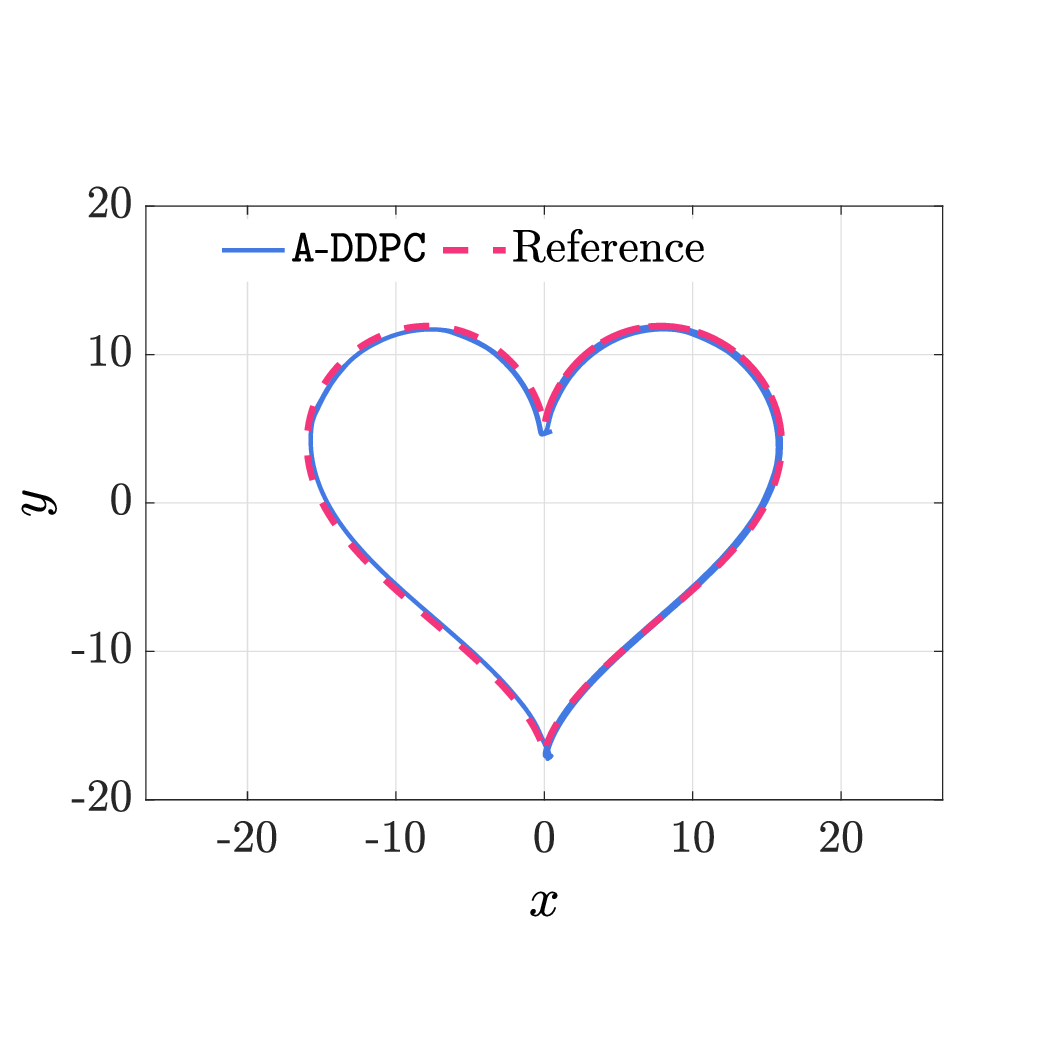}}  \\
\vspace{-2mm}
\caption{Closed-loop tracking performance of the two-wheel robot with different \method{DDPC} variants.}
\label{fig:robot-tracking-2}
\end{figure}

We further report the computation times of \method{DDPC} variants in Table \ref{table:computation-time-two-wheeled}. The optimization problems are solved using Mosek \cite{mosek} on an Intel Core i7-9750H CPU with 16GB RAM and each controller is executed $100$ times to obtain the mean computation time. The computation times of all approaches are around $3\times10^{-2}\,\mathrm{s}$, which are relatively small, although slightly larger than the discretization time $0.025\,\mathrm{s}$, with a gap of around $5 \times 10^{-3}\,\mathrm{s}$. We can compute the control input less frequently and apply multi-step inputs from the controller between updates. Fig. \ref{fig:robot-tracking-2} shows the control performance of the \method{DDPC} variants when the optimal control input is updated every $0.05\,\mathrm{s}$ and two control inputs are applied at each update. The results remain satisfactory and meet the real-time implementation requirement. Furthermore, this gap could potentially be reduced by developing a solver tailored to the specific structure of our problem.
\begin{table}[h]
\caption{Computation time of \method{DDPC} variants (Unit $\mathrm{s}$)}
\centering
\begin{threeparttable}
    \begin{tabular}{ccccc}
    \toprule
    \noalign{\vskip -2pt}
     &\method{L-DDPC} & \method{C-DDPC} &\method{O-DDPC} & \method{SPC}  \\
     \noalign{\vskip -2pt}
    \midrule
     Two-wheeled robot &0.030 &0.032 & 0.029 & 0.0316 \\
    \bottomrule
    \end{tabular}
    \end{threeparttable}
    \label{table:computation-time-two-wheeled}
\end{table}

\end{document}